\newcommand{\cF}{{\cal F}}
\newcommand{\EE}{{\mathbb E}}
  \newcommand{\PX}{{\mathbb P}}
\newcommand{\PP}{{\mathbb P}}
\renewcommand{\cF}{\mathcal F}
\numberwithin{equation}{section}
\newtheorem{theorem}{Theorem}[section]
\newtheorem{lemma}[theorem]{Lemma}
\newtheorem{remark}[theorem]{Remark}
\newtheorem{prop}[theorem]{Proposition}
\newtheorem{coro}[theorem]{Corollary}
\newtheorem{Assumption}[theorem]{Assumption}
\DeclareFontFamily{OMX}{MnSymbolE}{}
\DeclareSymbolFont{MnLargeSymbols}{OMX}{MnSymbolE}{m}{n}
\DeclareFontShape{OMX}{MnSymbolE}{m}{n}{
	<-6>  MnSymbolE5
	<6-7>  MnSymbolE6
	<7-8>  MnSymbolE7
	<8-9>  MnSymbolE8
	<9-10> MnSymbolE9
	<10-12> MnSymbolE10
	<12->   MnSymbolE12
}{}
\DeclareFontShape{OMX}{MnSymbolE}{b}{n}{
	<-6>  MnSymbolE-Bold5
	<6-7>  MnSymbolE-Bold6
	<7-8>  MnSymbolE-Bold7
	<8-9>  MnSymbolE-Bold8
	<9-10> MnSymbolE-Bold9
	<10-12> MnSymbolE-Bold10
	<12->   MnSymbolE-Bold12
}{}
\let\llangle\@undefined
\let\rrangle\@undefined
\DeclareMathDelimiter{\llangle}{\mathopen}%
{MnLargeSymbols}{'164}{MnLargeSymbols}{'164}
\DeclareMathDelimiter{\rrangle}{\mathclose}%
{MnLargeSymbols}{'171}{MnLargeSymbols}{'171}
\begin{document}
	\title[Continuous data assimilation 
    for 2D stochastic Navier--Stokes equations ]{Continuous data assimilation\\  for 2D stochastic Navier--Stokes equations}
	\author{Hakima Bessaih, Benedetta Ferrario, Oussama Landoulsi  \and Margherita Zanella} 
    \address{\small{Department of Mathematics and Statistics, Florida International University, 11200 SW 8th St, Miami, FL 33199}}
    \address{\small{Dipartimento di Scienze Economiche e Aziendali, Universit\`a di Pavia, 27100, Pavia, Italy}} 
  \address{\small{Department of Mathematics and Statistics, University of Massachusetts Amherst, 710 N. Pleasant Street, Amherst, MA 01003-9305, USA}}
	\address{\small{Dipartimento di Matematica, Politecnico di Milano, Via E.~Bonardi 9, 20133 Milano, Italy}}
    \email{hbessaih@fiu.edu}
	\email{benedetta.ferrario@unipv.it}
  \email{olandoulsi@umass.edu}
	\email{margherita.zanella@polimi.it}
	\subjclass[2020]{
    35Q30, %NS
    60H15, %SPDEs
    35R60, %PDEs with randomness
    60H30, %Application to stochastic analysis
    93C20, % Control, observation systems governed by PDEs
    37C50, %approximate trajectories in smooth dynamics
    76B75. %flow control and optimization for incompressible inviscid fluids.
    }
	\keywords{Stochastic continuous data assimilation, 2D stochastic Navier-Stokes equations, additive and multiplicative noise, Foias-Prodi estimates in expected value.}
	
	% Abstract
	\begin{abstract}
Continuous data assimilation methods, such as the nudging algorithm introduced by Azouani, Olson, and Titi (AOT) \cite{azouani2014continuous}, are known to be highly effective in deterministic settings for asymptotically synchronizing approximate solutions with observed dynamics. In this work, we extend this framework to a stochastic regime by considering the two-dimensional incompressible Navier-Stokes equations subject to either additive or multiplicative noise. We establish sufficient conditions on the nudging parameter and the spatial observation scale that guarantee convergence of the nudged solution to the true stochastic flow.

In the case of multiplicative noise, convergence holds in expectation, with exponential or polynomial rates depending on the growth of the noise covariance. For additive noise, we obtain the exponential convergence both in expectation and pathwise.
These results yield a stochastic generalization of the AOT theory, demonstrating how the interplay between random forcing, viscous dissipation and feedback control governs synchronization in stochastic fluid systems.
	\end{abstract}
	
	\maketitle

  \tableofcontents

\section{Introduction}
Accurate computational prediction of nonlinear dynamical systems, such as fluid flows governed by the Navier-Stokes equations, faces two fundamental challenges: the initialization problem and the maintenance of long-time accuracy. The initialization problem arises because the complete initial state of the system is typically unknown or only partially observed with limited precision. The long-time accuracy problem concerns the accumulation of modeling and numerical errors during the systems evolution, which may lead to divergence from the true state.

Data assimilation provides a rigorous framework to address these challenges by combining observational measurements with the governing dynamical equations to produce reliable approximations of the true system state. The classical method of continuous data assimilation (CDA), as described by Daley \cite{daley1993atmospheric}, consists of directly incorporating observational measurements into a model as it is being integrated in time.
A more recent approach, introduced by Azouani, Olson, and Titi (AOT) in \cite{azouani2014continuous}, reformulates CDA as a feedback-control (or nudging) mechanism, inspired by ideas from control theory \cite{AbderrahimTiti2014}. In this formulation, an additional relaxation term is introduced to nudge the model solution toward the observed data. The resulting system generates an approximate trajectory that converges asymptotically to the reference (true) solution. Under suitable assumptions on the data and model regularity, convergence can be shown to occur at an exponential rate. This algorithm offers a practical and efficient approach to estimating the current and future states of a dynamical system when only sparse observational data are available.
\\
The AOT algorithm has since been rigorously analyzed for a wide range of physical models, confirming its effectiveness in diverse settings. Examples include the Navier-Stokes equations with noisy data \cite{bessaih2015continuous} and \cite{Blomker}, discrete data assimilation schemes \cite{foias2016discrete}, miscible displacement in porous media \cite{bessaih2022continuous}, the 3D Navier-Stokes-$\alpha$ model \cite{albanez2016continuous}, the 2D Rayleigh--B\'enard convection problem \cite{FGHMMW, FLT, FJT}, the surface quasi-geostrophic equation \cite{JMT, JMOT}, the Korteweg de Vries equation \cite{JST}, the 2D magnetohydrodynamic system \cite{BHLP}, the 3D Brinkman--Forchheimer--extended Darcy model \cite{MTT}, the 3D primitive equations \cite{P}, the 3D Leray--$\alpha$ model \cite{FLT2}, the Voigt relaxation of the 2D Navier--Stokes equations \cite{LP} and the 3D Ladyzhenskaya model \cite{CGJA}.

\smallskip

Let us briefly recall the general structure of the continuous data assimilation algorithm. Let 
$u=u(t)$ denote the unknown true state of a dynamical system governed by the equation
\[
\frac{{\rm  d}u(t)}{{\rm d}t} = F(u(t)),\qquad t>0
\]
with an unknown initial condition $u(0)$.
Observations are available only at a coarse spatial scale through an interpolant operator  \( R_h(u(t)) \), where \( h > 0 \) 
 denotes the observation resolution. The CDA algorithm constructs an approximate solution of the modified equation
 \[
\frac{{\rm d}U(t)}{{\rm d}t} = F(U(t)) + \mu R_h(u(t) - U(t)),
\]
where  \( \mu>0 \) is the nudging parameter. Under suitable conditions on  \( \mu \) and \( h \) it can be shown that 
\( U(t) \) converges to \( u(t) \) as $t\to+ \infty$, for any  initial data $u(0)$ and $ U(0)$.
The estimates controlling the difference between $U$ and $u$  are often referred to as Foias--Prodi estimates, and they typically guarantee exponential decay in time provided  \( h \) and  \( \mu \) satisfy appropriate bounds. The choice of $R_h$ is flexible and may correspond to projections onto low Fourier modes, local spatial averages, or nodal values. A standard example is the projection onto Fourier modes with wavenumbers   \(|k| \leq 1/h\).
A physically relevant example is given by spatial volume averages over partitions of the domain, as studied in \cite{jones1992determining, jones1993upper, foias1991determining}.

\smallskip

Most rigorous results on continuous data assimilation have been developed in the deterministic setting (see \cite{azouani2014continuous, daley1993atmospheric} and references therein). The stochastic setting, where either the model dynamics or the observations are affected by random perturbations, has received comparatively less attention (see, e.g., \cite{bessaih2015continuous}, \cite{Blomker}); for the numerical results see \cite{Hammoud,CLL21} and the references therein. Nevertheless, stochastic modeling is essential for realistic applications, as measurement noise and unresolved fluctuations are intrinsic features of physical systems.

\smallskip

In the stochastic framework, the system evolution is described by a stochastic differential equation (SDE) of the form
\[
{\rm d}u(t) = F(u(t)) \, {\rm d}t + G(u(t)) \, {\rm d}W(t),
\]
or 
\[
{\rm d}u(t) = F(u(t)) \, {\rm d}t + G \, {\rm d}W(t).
\]
Here \( W(t) \) is a cylindrical Wiener process and \( G \) a suitable operator representing the noise structure, which may depend on the solution or not, thus distinguishing between multiplicative and additive  noise.

The corresponding data assimilation equations take, respectively, the forms
\[
{\rm d}U(t) = \left[ F(U(t)) + \mu R_h(u(t) - U(t)) \right]\,{\rm d}t + G(u(t)) \, {\rm d}W(t),
\]
or 
\[
{\rm d}U(t) = \left[ F(U(t)) + \mu R_h(u(t) - U(t)) \right]\,{\rm d}t + G \, {\rm d}W(t).
\]

In this context, convergence analysis becomes more subtle due to the stochastic forcing. One may study convergence either pathwise (almost surely, for each realization of the noise) or in expectation (in terms of statistical averages). When the noise is additive, the stochastic terms cancel in the evolution of the difference
$u(t)-U(t)$, which allows one to establish pathwise exponential convergence as $t\to+ \infty$, for any  initial data $u(0)$ and $ U(0)$. 
For multiplicative noise, this cancellation no longer occurs, and convergence is typically obtained in expectation, with explicit polynomial or exponential rates depending on the noise growth.

\smallskip

To illustrate these ideas, we focus on the two-dimensional incompressible stochastic Navier-Stokes equations, which provide a canonical testbed: they are analytically tractable yet retain the essential nonlinear features of realistic fluid models. The stochastic formulation offers a natural way to incorporate random perturbations representing unresolved scales, measurement noise, or uncertain forcing. Specifically, we consider a velocity field $u$
evolving on a bounded two-dimensional domain $D$
with no-slip Dirichlet boundary conditions, that satisfies an equation driven by a multiplicative noise:
\begin{align}
\label{eq:multiplicative-noise}
\begin{cases}
{\rm d}u(t) - \nu \Delta u(t) \, {\rm d}t + (u(t) \cdot \nabla) u(t) \, {\rm d}t + \nabla p \, {\rm d}t = f \, {\rm d}t + G(u(t)) \, {\rm d}W(t), 
\\
\text{div } u (t)= 0, 
\\ 
u(0)=u_0,
\end{cases}
\end{align}
or an equation perturbed by an additive noise
\begin{align}
\label{eq:additive-noise}
\begin{cases}
{\rm d}u(t) - \nu \Delta u(t) \, {\rm d}t + (u(t) \cdot \nabla) u(t) \, {\rm d}t + \nabla p \, {\rm d}t = f \, {\rm d}t + G \, {\rm d}W(t), 
\\
\text{div } u(t) = 0, 
\\ 
u(0)=u_0,
\end{cases}
\end{align}
Here the unknowns are the velocity $u$ and the pressure $p$.
The data are the kinematic viscosity  $\nu>0$, the forcing terms ($f$ is the   deterministic one) and the initial velocity.
The stochastic term
$W(t)$ represents random external influences, such as fluctuating wind or temperature variations, and will be modeled as a cylindrical Wiener process. The precise assumptions on $f$,  $G$, and $W$
 are specified in the following sections.

\subsection{Summary of contributions, challenges and main results}
In this work, we first review the deterministic data assimilation results given in \cite{azouani2014continuous} and then present new results for the stochastic setting, distinguishing between additive and multiplicative noise. 
The stochastic setting introduces additional challenges, due to the presence of noise and the inherent uncertainty in the system’s evolution.
We will investigate the convergence of the solution $U$ of the nudging equation towards the  solution $u$ of the true stochastic Navier–Stokes equations, as time goes to infinity.

We begin with the case of multiplicative noise, which represents the most general and technically demanding scenario. In this setting, we establish convergence in expectation of $U$ to $u$, with explicit rates that depend crucially on the growth properties of the noise coefficient $G(u)$.  The proof relies on uniform-in-time a priori estimates for the solution. The convergence behavior of the assimilation algorithm is shown to depend sensitively on the growth conditions imposed on 
$G(u)$. When $G$ is bounded, we obtain exponential convergence in expectation as time goes to infinity. If 
$G$ exhibits sublinear growth, convergence in expectation still holds, though with a polynomial decay rate of arbitrary order 
$p>0$. In the case of linear growth of $G$, convergence persists but only with a polynomial rate, and for 
$p$ restricted to a certain range that depends explicitly on the viscosity $\nu$ and the intensity of the additive component of the noise. In this regime, ensuring convergence requires a structural condition relating the dissipativity of the system to the strength of the stochastic forcing.
\\
These results highlight the greater mathematical complexity of the multiplicative noise framework, where one must balance the destabilizing influence of the stochastic forcing against the stabilizing mechanisms induced by nudging and viscous dissipation. In this case, since the noise term depends explicitly on the state, when taking the difference between $U$ and $u$, the stochastic contributions do not vanish pathwise. Instead, the noise disappears only after taking the expectation, which leads to convergence results in expected value.

We then turn to the case of additive noise, where the situation is considerably simpler. The estimates in expectation follow as a direct corollary of the bounded multiplicative noise case. Moreover, when considering the difference between $U$ and $u$, the additive noise terms cancel out almost surely, allowing for a fully pathwise analysis. This enables us to establish a pathwise exponential convergence result. 

These fundamental differences between the multiplicative and additive noise settings reflect the distinct mathematical challenges posed by the two noise structures, and influence the choice of analytical tools and assumptions required to obtain rigorous data assimilation results.
Taken together, these findings can be viewed as a stochastic generalization of the classical results by \cite{azouani2014continuous}. In particular, the conditions on the nudging parameters $\mu$ and the spatial resolution $h$ now explicitly depend on the intensity of the stochastic forcing. When the noise intensity is set to zero, one recovers, modulo a constant, the deterministic convergence conditions of the original framework \cite{azouani2014continuous}.

We emphasize that the techniques employed in this work to analyze stochastic data assimilation are closely related to those developed for the study of invariant measures, particularly in establishing uniqueness and asymptotic stability. Indeed, both contexts share the fundamental objective of understanding the long-time behavior of stochastic dynamical systems. In this article, we show that the same analytical tools used in the study of invariant measures, especially in the hypoelliptic setting (see e.g. \cite{GHMR17}, \cite{KS},  \cite{FZ23} and \cite{FZ25}), can be effectively adapted to address challenges arising in stochastic data assimilation.

We can summarize our main contributions as follows.
\begin{itemize}
    \item [(i)]In the case of multiplicative  noise,
    \begin{itemize}
        \item 
  if the noise coefficient $G(u)$ is bounded, under suitable assumptions on the nudging parameter $\mu$ and the spatial resolution $h$ (depending on the viscosity $\nu$, as well as on the deterministic and stochastic forcings), we prove that (see Theorem \ref{convergence_mult_case}(i))
    \[
        \mathbb{E}\,\|u(t) - U(t)\|_H^2 \;\longrightarrow\; 0
        \quad \text{exponentially fast as } t \to +\infty.
    \]
    \item In the case of multiplicative  noise, if the noise coefficient $G(u)$ exhibits sublinear growth, then, under appropriate conditions on $\mu$ and $h$ (depending on the viscosity $\nu$, as well as on the deterministic and stochastic forcings), we obtain (see Theorem \ref{convergence_mult_case}(ii))
    \[
        \mathbb{E}\,\|u(t) - U(t)\|_H^2 \;\longrightarrow\; 0
        \quad \text{$p$-polynomially fast for any } p \in (0,+\infty)
        \text{ as } t \to +\infty.
    \]
    \item In the case of multiplicative  noise, if the noise coefficient $G(u)$ exhibits linear growth, and again under suitable conditions on $\mu$ and $h$ (depending on the viscosity $\nu$, as well as on the deterministic and stochastic forcings), we show that (see Theorem \ref{convergence_mult_case}(iii))
    \[
        \mathbb{E}\,\|u(t) - U(t)\|_H^2 \;\longrightarrow\; 0
        \quad \text{$p$-polynomially fast as } t \to +\infty,
    \]
    for exponents $p$ lying in a range that depends explicitly on the viscosity $\nu$ and on the intensity of the stochastic forcing. 
      \end{itemize}
   \item [(ii)] In the case of additive noise, we prove that the solution $U$ of the nudging system converges to the true solution $u$
    both in expectation and pathwise, exponentially fast as $t \to +\infty$, provided that $\mu$ and $h$ satisfy suitable conditions (depending on the viscosity $\nu$, as well as on the deterministic and stochastic forcings). See Theorems \ref{exp-conv-prop} and \ref{pathwise_data_ass}.
\end{itemize}

We specify that exponential convergence means that the limit is dominated by a term of the kind $\tilde Ce^{-Ct}$ for suitable positive constants $C$ and $\tilde C$; similarly, $p$-polynomial decay means that we have a bound in the limit with a 
 term of the kind $\frac C{t^p}$ for suitable positive constant $C$.
 
\subsection{Plan of the paper}
The contents of the paper are structured as follows. Section~\ref{sec:setting} contains all the necessary preliminary material. Section~\ref{sec:det} briefly recalls known results on the data assimilation algorithm in a deterministic framework. Section~\ref{sec:mult} is devoted to investigating the data assimilation algorithm in the stochastic setting in the case of a multiplicative noise, while in Section~\ref{sec:add} the case of an additive noise is investigated. Finally, a collection of useful technical estimates are presented in Appendix~\ref{sec:appA} and Appendix~\ref{sec:appB}. In Appendix \ref{sec:appC} we provide an alternative proof of Theorem \ref{pathwise_data_ass}.

%%%%%%%%%%%%%%%%%%%%%%%%%%%%%%%
\section{Mathematical setting}
\label{sec:setting}

\subsection{Notations}
Given any Banach space $E$ we denote its topological dual by $E^*$. 
The duality pairing between $E$ and $E^*$ will be indicated by $\langle\cdot,\cdot\rangle _{E^*,E}$. If no confusion arises, we may drop the subscripts.
For any real Hilbert space $H$, we denote by $\|\cdot\|_H$ and $(\cdot, \cdot)_H$ 
the norm and the scalar product, respectively. 
Given any two Banach spaces 
$E$ and $F$, we use the symbol $\mathcal{L}(E,F)$ for the space of linear bounded operators from $E$ to $F$. 
If $H$ and $K$ are separable Hilbert spaces, we employ the symbol
$L_{HS}(H, K)$ for the space of Hilbert--Schmidt operators from $H$ to $K$, endowed with its canonical norm $\|\cdot\|_{L_{HS}(H, K)}$. 
For a fixed $T>0$ we denote by 
$\mathcal{C} ([0,T];E)$ the space of
functions from $[0,T]$ to $E$ which are 
strongly continuous.
Moreover, for every $p\in[1,+\infty)$
we use the classical symbol $L^p(0,T; E)$ for the space
of strongly measurable
$p$-Bochner integrable functions from $(0,T)$ to $E$. Moreover by $L^p_{loc}(0,+\infty; E)$ we denote the space of locally $p$-Bochner integrable functions. Similarly for $p=\infty$.

 As for the probabilistic framework, let $(\Omega,\cF,\mathbb{F}:=(\cF_t)_{t\ge 0},\mathbb{P})$ be a filtered probability space satisfying the usual conditions (namely it is saturated and right-continuous).

 Throughout the whole work,  we reserve the symbol $C$ (eventually indexed) for constants depending on some or all the structural parameters of the problem. If the dependencies of such constants are relevant, they will be explicitly pointed out. The values of these constants may change within the same argument without relabelling.

\subsection{Functional spaces and operators}
Consider a bounded domain $D\subset\mathbb{R}^2$ with smooth boundary $\partial D$ and Lebesgue measure denoted by $|D|$. The symbol $W^{s,p}(D)$, where $s\in\mathbb{R}$ and $p\in[1,+\infty]$, denotes the usual real Sobolev space of order $(s,p)$ and we denote by $\|\cdot\|_{W^{s,p}(D)}$ its canonical norm. In the Hilbert case $p = 2$, we define $H^s(D):=W^{s,2}(D)$, $s\in\mathbb{R}$,
	endowed with its canonical norm $\|\cdot\|_{H^s(D)}$.

    In order to handle the Navier--Stokes velocity field, we define the following solenoidal vector-valued spaces
	\begin{align*}
		H := \overline{\{ v \in  [{\mathcal{C}}^\infty_0(D)]^2: \text{div}\ v = 0 \text{ in } D\}}^{[{L}^2(D)]^2}, \quad
		V := \overline{\{ v \in [{\mathcal{C}}^\infty_0(D)]^2 : \text{div}\ v = 0 \text{ in } D\}}^{[{H}^1(D)]^2}.
	\end{align*}
    The space $H$ is endowed with the Hilbert structure inherited by $[L^2(D)]^2$, and we denote its norm and scalar product by $\|\cdot\|_H$ and $(\cdot,\cdot)_H$, respectively.
	By means of the Poincaré inequality, on the space $V$ we can use the norm $\|v\|_{V}:=\|\nabla v\|_{[L^2(D)]^2}$, for all $v\in V$, and its respective
	scalar product $(\cdot,\cdot)_{V}$. By identifying the Hilbert space $H$ with its dual we have the variational  structure
	\[
	V \subset H \subset  V^*,
	\]
	 with dense and compact embeddings. 
     
\textbf{The Stokes operator $A$.}     We denote by $\Pi$ the Leray-Helmholtz projector of $[L^2(D)]^2$ onto $H$.
The Stokes operator $A=-\Pi\Delta$ is a linear operator in $H$ with domain $D(A)=[H^2(D)]^2\cap V$. 
It can be extended (and we use the same notation) as a linear operator $A:V\to V^*$ by
\[
	\langle Av, w \rangle_{V^*,V}:=
	(\nabla v, \nabla w)_{ H}\,, \qquad \forall \: v,w\in V.
	\]
Recalling the spectral properties of the operator $A$, it is possible to define the family of power operators $A^s$ for any $s \in \mathbb{R}$. In particular, using $H$ as pivot space, we highlight the general structure
		\[
		D(A^s) \subset D(A^t) \subset H \equiv D(A^0) \subset  D(A^{-t}) \subset D(A^{-s}),
		\]
		for any $s > t > 0$, with dense and compact embeddings.

We denote by $\{\lambda_n\}_{n \in \mathbb{N}}$ the eigenvalues of  the Stokes operator 
$ A$ and by $\{e_n\}_{n \in \mathbb{N}}$ the corresponding eigenvectors, which form a complete orthonormal system in $H$. Moreover, $0<\lambda_1\le \lambda_2 \le \cdots,$ and 
\[
\lim_{n \rightarrow+ \infty}\lambda_n=+ \infty.
\]
We have  the Poincar\'e inequality 
\begin{align}
    \label{Poincarè}
\lambda_1 \left\| u \right\|_H^2 \leq \left\| u \right\|_V^2,  \quad u \in V.
\end{align}

\textbf{The bilinear form $B$.} We define the  trilinear form 
	$b$ on $V \times V \times V$ as
	\[
	b(u,  v,  w):=\int_{D}( u\cdot\nabla) v\cdot w
	=\sum_{i,j=1}^2\int_D u_i\frac{\partial v_i}{\partial x_j}w_j\,,
	\qquad \forall \:  u,  v,  w \in V\,,
	\]
	and the associated bilinear form $B:V\times V\to  V^*$ as
	\[
	\langle B(u, v), w\rangle_{ V^*, V}:=b(u,  v,  w)\,,
	\qquad\forall \: u, v, w \in V\,.
	\]
	Let us recall that $b(u, v,  w)=-b(u,  w, v)$ for all
	$ u,  v,  w \in V$, from which it follows in particular
	that $b(u, v, v)=0$ for all $ u,  v\in V$. 
  As a consequence, the map  $B$ satisfies the following antisymmetry relations:
  \begin{equation} 
  \label{B}
  \langle B(u,v), w\rangle = - \langle B(u,w), v\rangle ,  \quad \text{and } \quad \langle B(u,v),  v\rangle = 0, \qquad \mbox {for all} \ 
 u,v, w\in V.
  \end{equation}

\subsection{Assumptions on the stochastic term}
Eventually, let us deal with the stochastic term. 
On the normal filtered probability space $(\Omega,\cF,(\cF_t)_{t\ge 0},\mathbb{P})$, we consider a cylindrical Wiener process $W$ with values in some separable Hilbert space $U$. 
For convenience, we fix once and for all a complete orthonormal system
	$\{u_k\}_{k\in\mathbb{N}}$ on $U$.
    We recall that, as a cylindrical process on $U$, $W$ admits the following formal representation
	\begin{equation} \label{eq:representation}
		W = \sum_{k\in\mathbb{N}} \beta_k u_k,
	\end{equation}
	where $\{\beta_k\}_{k \in \mathbb{N}}$ is a family of real and independent Brownian motions. It is well known that \eqref{eq:representation} does not converge in $U$. Nonetheless, there always exists a larger Hilbert space $U_0$, such that
	$U \subset U_0$ with Hilbert--Schmidt embedding $\iota$, enabling the identification of $W$ as a $Q^0$-Wiener process on $U_0$, with $Q^0=\iota \circ \iota^*$ being trace-class on $U_0$ (see \cite[\S 2.5.1]{LiuRo}). 
In the following, we may implicitly assume this extension by simply saying that $W$ is a cylindrical process on $U$. Similarly, one deals with the  stochastic integral $
	\int_0^t G(s)\,{\rm d} W(s)
$ (see \cite[\S 2.5.2]{LiuRo}). 

\subsubsection{The multiplicative noise case}
When dealing with a multiplicative type noise, that is a noise that depends on the solution, we impose the following conditions on the covariance operator, i.e. on the operator $G$ appearing in equation \eqref{eq:multiplicative-noise}.
\begin{Assumption}
\label{assumption-stochastic1}
$G:{H} \to L_{HS}(U,H)$ is a Lipschitz continuous operator,  i.e.
 \begin{equation} \label{Lipschitz_G}
  \exists \ L >0 : \quad  \|G(u)-G(v)\|^2_{L_{HS}(U,H)} \le L\|u-v\|^2_H \qquad \forall u, v \in H.
 \end{equation}
\end{Assumption}

\begin{Assumption}
\label{assumption-stochastic2}
One of the following conditions is in force.
 $\;$\\[-3mm]
 \begin{itemize}
\item[\textbf{(B)}] \label{item:1} [Bounded noise]
 There exists a positive constant $K_B$ such that
 \begin{equation*}\label{bounded}
  \|G(u)\|^2_{L_{HS}(U,H)}\le K_B,\quad \forall u \in H.
 \end{equation*}
\item[\textbf{(S)}] \label{item:2}  [Sublinear noise]
  There exist non negative constants $K_S, \widetilde{K_S}$ and $\gamma \in (0,1)$ such that
 \begin{equation*}
  \|G(u)\|^2_{L_{HS}(U,H)}\le K_S+ \widetilde{K_S}\|u\|_H^{2\gamma}, \quad \forall u \in H.
 \end{equation*}
\item[\textbf{(L)}]  [Linear noise]
\label{item:3} 
  There exist non negative constants $K_L, \widetilde{K_L}$ such that
 \begin{equation*}
  \|G(u)\|^2_{L_{HS}(U,H)}\le K_L+ \widetilde{K_L}\|u\|^2_H, \quad \forall u \in H.
 \end{equation*}
\end{itemize}
\end{Assumption}

In the sequel, when we say that Assumption \ref{assumption-stochastic2} holds we mean that 
one of the three assumptions (B),(S) or (L) holds.
Concrete examples of operators $G$ satisfying the above conditions can be found in \cite[Example 2.4]{FZ23}.
Notice that
$(\bf L) \Longrightarrow(\bf S) \Longrightarrow(\bf B) $; so we  could consider only part $(\bf L)$ in Assumption 
\ref{assumption-stochastic2}. However the  convergence results are different in the three cases and with assumption
$(\bf L)$ we obtain the most restrictive result.

\subsubsection{The additive noise case}
When dealing with an additive type noise, that is a noise that does not depend on the solution, we
 assume that $G$ is a Hilbert-Schmidt operator from $U$ into $H$. This might be seen as a particular case of the multiplicative bounded  noise (taking $L=0$ and $K_B= \|G(u)\|^2_{L_{HS}(U,H)}$).

\subsection{Interpolant operators}
\label{sect:interpolant-operators}
In the absence of the knowledge of the initial  velocity $u_0$, the data assimilation method for
the two-dimensional incompressible Navier-Stokes equations allows
the incorporation of the following linear operator $R_h: [H^1(D)]^2 \longrightarrow [L^2(D)]^2$, 
which interpolates its input function at a length scale $h>0$
and satisfies the approximating identity property
 
 \begin{equation}
\label{iden-data-approx}
\left\| \phi -  R_h(\phi) \right\|_{[L^2(D)]}^2 \leq c_0 h^2 \left\| \phi \right\|_{[H^1(D)]^2}^2,
 \end{equation}
 for every $\phi \in [H^1(D)]^2.$ 
 
 %The second type of interpolant observables are $R_h: H^{2} \longrightarrow L^{2}$, which satisfy \begin{equation}\label{iden-data-approx_2}\left\| \phi -  R_h(\phi) \right\|_{ L^2}^2 \leq c_1 h^2 \left\| \phi \right\|_{H^1}^2   + c_2 h^4 \left\| \phi \right\|_{H^2}^2, \end{equation} for every $\phi \in H^2.$ \\

A physically relevant example of an interpolant which satisfies \eqref{iden-data-approx} is given by finite volume elements  (see \cite{azouani2014continuous} and the therein references). 
Specifically, let $h > 0$ be given, and let 
\[
D = \bigcup_{j=1}^{N_h} Q_j,
\]
where the $Q_j$ are disjoint subsets such that $\operatorname{diam}(Q_j) \le h$ for 
$j = 1, 2, \ldots, N_h$. Then we set
\[
R_h(\varphi)(x) = \sum_{j=1}^{N_h} \overline{\varphi}_j\,\pmb{1}_{Q_j}(x),
\]
where 

\[
 \overline{\varphi}_j=\frac{1}{|Q_j|}\int_{Q_j} \varphi (x) dx
 \]

The orthogonal projection onto the Fourier modes, with wave numbers k
such that $|k| \leq \frac{1}{h},$ is an another example of an interpolant operator which satisfies  the approximation property 
\eqref{iden-data-approx}.

\section{Data assimilation in the deterministic framework}
\label{sec:det}

Rigorous analysis of the nudging algorithm for partial differential equations in fluid mechanics began with
the work of Azouani, Olson and Titi \cite{azouani2014continuous}, where the authors estimate  threshold values for the relaxation parameter $\mu$ and data resolution $h$ for the two dimensional incompressible Navier-Stokes equations. 

More in details, in \cite[Section 3]{azouani2014continuous} the authors consider the  Navier-Stokes equations  
\begin{align}
\label{deterministic-NS}
\begin{cases}
  \partial_t u   - \nu  \Delta u +  \left(u \cdot \nabla \right) u   + \nabla p \,   = f    
  \\
 {\rm{div}} \; u = 0, 
 \\
 u(0)=u_0,
 \end{cases}
\end{align}
on an open, bounded and connected set $D$ in $\mathbb{R}^2$ with $C^2$-boundary, endowed with no-slip Dirichlet boundary conditions.
They introduce the nudging equation as 
\begin{align}
\label{deterministic-NS_nud}
\begin{cases}
  \partial_t U   - \nu  \Delta U +  \left(U \cdot \nabla \right) U   + \nabla p \,   = f   - \mu R_{h}(U-u)  
  \\
 {\rm{div}} \; U= 0, 
 \\
 U(0)=U_0,
 \end{cases}
\end{align}
and formulate their  main result  as follows.
\begin{theorem}
\label{AOT_thm}
Let $f \in L^\infty(0,+ \infty;H)$ and let $R_h$ satisfy assumption \eqref{iden-data-approx}.
Let $u$ be the solution to the equation \eqref{deterministic-NS} with initial condition $u_0 \in H$ and  $U$ the solution to the equation \eqref{deterministic-NS_nud} with initial condition $U_0\in H$. Then, if $\mu$ and $h$ satisfy
\begin{equation}
\label{cond_mu_det}
c\mathcal{G}^2\nu \lambda_1 \le  \mu \leq \frac{  \nu }{c_0 h^2 },
\end{equation}
where $c$ is a suitable positive constant, $c_0$ is the constant appearing in \eqref{iden-data-approx} and $\mathcal{G}$ denotes the Grashof number
\[
\mathcal{G}:= \frac{1}{\nu^2 \lambda_1} \limsup_{t \rightarrow +\infty} \left\| f(t) \right\|_H,
\]
then
$\|u(t)-U(t)\|_H \rightarrow 0$ exponentially fast as $t \rightarrow + \infty$.
\end{theorem}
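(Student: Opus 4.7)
The proof plan is to run a Gronwall-type argument on the error $w := U - u$. Subtracting \eqref{deterministic-NS} from \eqref{deterministic-NS_nud}, eliminating the pressure via the Leray projection, and decomposing the nonlinear increment as $B(U,U) - B(u,u) = B(U,w) + B(w,u)$, one obtains
\[
\partial_t w + \nu A w + B(U,w) + B(w,u) + \mu R_h(w) = 0,
\]
which, after adding and subtracting $\mu w$ on the right, is conveniently rewritten as
\[
\partial_t w + \nu A w + B(U,w) + B(w,u) + \mu w = \mu (w - R_h(w)).
\]

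I would then take the $H$-inner product with $w$ and use $\langle B(U,w), w\rangle = 0$ from \eqref{B} to arrive at the energy identity
\[
\frac{1}{2}\frac{d}{dt}\|w\|_H^2 + \nu \|w\|_V^2 + \mu \|w\|_H^2 = -\langle B(w,u), w\rangle + \mu (w - R_h(w), w)_H.
\]
The nonlinear term is controlled by the 2D Ladyzhenskaya-type bound $|\langle B(w,u), w\rangle| \leq C_1 \|w\|_H \|w\|_V \|u\|_V$, and the nudging remainder by \eqref{iden-data-approx} as $|\mu (w - R_h(w), w)_H| \leq \mu \sqrt{c_0}\, h\, \|w\|_V \|w\|_H$. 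Applying Young's inequality on each product, splitting each into a $\|w\|_V^2$ piece (absorbed by $\nu\|w\|_V^2$) and a $\|w\|_H^2$ piece, and using the upper bound $\mu \leq \nu/(c_0 h^2)$ from \eqref{cond_mu_det} to absorb the interpolation contribution into the $\mu\|w\|_H^2$ dissipation, yields
\[
\frac{d}{dt}\|w(t)\|_H^2 \leq \left( \frac{C_2}{\nu} \|u(t)\|_V^2 - \mu \right) \|w(t)\|_H^2,
\]
and hence, by Gronwall,
\[
\|w(t)\|_H^2 \leq \|w(0)\|_H^2 \exp\!\left( -\mu t + \frac{C_2}{\nu} \int_0^t \|u(s)\|_V^2 \, ds \right).
\]

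To close, I would invoke the classical enstrophy time-average for the 2D Navier--Stokes equation \eqref{deterministic-NS}, which, starting from the energy relation for $u$ together with $f \in L^\infty(0,\infty;H)$ and the definition of the Grashof number, gives
\[
\limsup_{t\to\infty} \frac{1}{t} \int_0^t \|u(s)\|_V^2 \, ds \leq \nu^2 \lambda_1 \mathcal{G}^2.
\]
Combined with the lower bound $\mu \geq c\,\mathcal{G}^2 \nu \lambda_1$ from \eqref{cond_mu_det} (with $c$ chosen strictly larger than $C_2$), this forces the exponent in the Gronwall bound to be bounded above by $-\eta\, t$ for some $\eta>0$ and all sufficiently large $t$, yielding the desired exponential decay of $\|w(t)\|_H$.

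The main obstacle, and the only real technical point, is the bookkeeping of the constants produced by the two Young's inequalities: the upper bound $\mu \leq \nu/(c_0 h^2)$ must be exactly what is needed to absorb the nudging-interpolation error into the $\mu\|w\|_H^2$ dissipation, while the lower bound $\mu \geq c\, \mathcal{G}^2 \nu \lambda_1$ must be exactly what is needed to dominate the time-averaged enstrophy coming from the nonlinear term; the paired condition \eqref{cond_mu_det} expresses the compatibility of these two constraints. Once these constants are balanced, the remainder is standard 2D Navier-Stokes analysis.
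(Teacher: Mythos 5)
Your argument is correct and is essentially the standard AOT proof, which the paper does not reproduce (Theorem \ref{AOT_thm} is quoted from \cite{azouani2014continuous}) but follows in structure for its stochastic analogues: your energy estimate for $w=U-u$, the absorption via $\mu\le \nu/(c_0h^2)$, and the Gronwall factor $\exp\bigl(-\mu t+\tfrac{C}{\nu}\int_0^t\|u\|_V^2\,ds\bigr)$ are exactly the scheme of Lemma \ref{data-estimate-1} and Theorem \ref{pathwise_data_ass}, with the deterministic time-averaged enstrophy bound playing the role of \eqref{stima-limsup-normaV}. The only caveats are bookkeeping ones you already flag (constants in the Ladyzhenskaya/Young steps, and implicitly $\mu>0$), so no genuine gap.
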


\section{Data assimilation in the stochastic framework: the multiplicative noise case}
\label{sec:mult}

We start with  the study of the 2D Navier-Stokes equations driven by a multiplicative noise. Projecting equation \eqref{eq:multiplicative-noise} onto $H$, we get rid of the pressure term and we obtain the following abstract
formulation for the equations \eqref{eq:multiplicative-noise}
\begin{equation}
\label{NS_abs_mult}
\begin{cases}
{\rm d}u(t) + \left[\nu Au(t)+B(u(t),u(t))\right]\,{\rm d}t=f \, {\rm d}t+G(u(t))\,{\rm d}W(t),\qquad\qquad t>0
\\
u(0)=u_0\in H
\end{cases}
\end{equation}
We assume $\nu>0$   
and $f\in V^*$ independent of time.

 \begin{theorem}
 \label{NS_mult-wp}
Let Assumptions \ref{assumption-stochastic1} and \ref{assumption-stochastic2} be in force, 
let $f \in V^*$ and $u_0 \in H$. 
Then  there exists a unique strong solution $u$ to equation \eqref{NS_abs_mult} with $\PP$-a.s. paths in 
\begin{equation*}
\mathcal{C}([0,+\infty), H) \cap L_{loc}^2(0,+\infty;V),
\end{equation*}
that $\mathbb{P}$-a.s. satisfies
  \begin{multline*}			
  \left(u(t), \phi\right)_H +  \int_0^t \left( A^{1/2} u(s), A^{1/2} \phi\right)_H\, {\rm d}s - \int_0^t \big\langle B(u(s),\phi),  u(s) \big\rangle \,{\rm d}s
  \\
=  \left( u_0, \phi\right)_H + \langle f, \phi\rangle t+\int_0^t\langle G(u(s))\,{\rm d}W(s), \phi \rangle,
  \end{multline*}
for every $t \ge 0$ and every $\phi \in V$. 
\end{theorem}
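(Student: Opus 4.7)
The strategy is the classical Galerkin--compactness--uniqueness scheme combined with the Yamada--Watanabe principle (weak existence plus pathwise uniqueness yields strong existence). Using the eigenbasis $\{e_n\}$ of $A$, set $H_n := \mathrm{span}\{e_1,\dots,e_n\}$ with orthogonal projector $\Pi_n$, and consider the finite-dimensional SDE
\begin{equation*}
du_n = \bigl[-\nu A u_n - \Pi_n B(u_n,u_n) + \Pi_n f\bigr]\,dt + \Pi_n G(u_n)\,dW,\qquad u_n(0) = \Pi_n u_0.
\end{equation*}
Since $G$ is Lipschitz (Assumption \ref{assumption-stochastic1}) and $B$ is locally Lipschitz on the finite-dimensional space $H_n$, standard SDE theory yields a unique local strong solution; global existence follows from the a priori bounds derived below.

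Applying Itô's formula to $\|u_n(t)\|_H^2$ and exploiting the cancellation $\langle B(u_n,u_n),u_n\rangle=0$ from \eqref{B}, estimating $\langle f,u_n\rangle$ by Young's inequality against $\nu\|u_n\|_V^2$, and using the growth bound on $G$ (the linear case \textbf{(L)} of Assumption \ref{assumption-stochastic2} being the most demanding and sufficient), the Burkholder--Davis--Gundy inequality together with Gronwall's lemma yield
\begin{equation*}
\sup_n \mathbb{E}\Bigl[\sup_{t\in[0,T]} \|u_n(t)\|_H^2 + \nu \int_0^T \|u_n(s)\|_V^2\,ds\Bigr] \le C_T\bigl(1+\|u_0\|_H^2\bigr),\qquad \forall\, T>0.
\end{equation*}
The main obstacle is then the passage to the limit in the quadratic drift $B(u_n,u_n)$: mere weak convergence in $L^2(\Omega;L^2(0,T;V))$ cannot identify a nonlinear limit. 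I would establish tightness of the laws of $\{u_n\}$ on $L^2(0,T;H)\cap \mathcal{C}([0,T];V^*)$ by complementing the energy bounds above with a fractional time-increment estimate of Aldous type for the drift and a moment bound on the stochastic integral; Prokhorov plus Skorokhod's representation theorem then produces, on an auxiliary probability space, an a.s.\ strongly convergent subsequence $\tilde u_n\to \tilde u$ in $L^2(0,T;H)$. This strong convergence is precisely what allows one to pass to the limit in $\int_0^t \langle B(\tilde u_n,\tilde u_n),\phi\rangle\,ds$ for every test $\phi\in V$, and the martingale representation theorem reconstructs the noise, delivering a weak martingale solution.

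Pathwise uniqueness is then proved by the standard 2D argument. For two solutions $u,v$ sharing initial datum and noise, set $w=u-v$ and apply Itô's formula to $\|w\|_H^2$. The identity $\langle B(u,u)-B(v,v),w\rangle = \langle B(w,v),w\rangle$ (from \eqref{B}) combined with the sharp two-dimensional estimate $|\langle B(w,v),w\rangle|\le \tfrac{\nu}{2}\|w\|_V^2 + \tfrac{C}{\nu}\|v\|_V^2\|w\|_H^2$ and the Lipschitz property of $G$ lead to
\begin{equation*}
d\|w\|_H^2 + \nu\|w\|_V^2\,dt \le \Bigl(\tfrac{C}{\nu}\|v\|_V^2 + L\Bigr)\|w\|_H^2\,dt + dM_t,
\end{equation*}
with $M$ a local martingale. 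The $\mathbb{P}$-a.s.\ integrability of $\|v\|_V^2$ on finite intervals, together with the stochastic Gronwall lemma, forces $w\equiv 0$. The Yamada--Watanabe theorem then upgrades the weak martingale solution to a strong solution on the original probability space; since all bounds are uniform on arbitrary finite intervals, the solution extends globally in time with paths in $\mathcal{C}([0,+\infty);H)\cap L^2_{\mathrm{loc}}(0,+\infty;V)$, and the variational identity in the statement follows by testing the limit equation against $\phi\in V$.
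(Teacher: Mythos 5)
Your proposal is essentially sound, but note that the paper does not prove this theorem at all: its ``proof'' is a citation to \cite[Theorem 3.6]{FZ23}, so there is no internal argument to match yours against. Your sketch is the classical route --- Galerkin approximation, energy estimates via It\^o, BDG and Gronwall, tightness by Flandoli--G\k{a}tarek-type fractional time estimates, Skorokhod representation to pass to the limit in $B(u_n,u_n)$, martingale reconstruction of the noise, pathwise uniqueness through the Ladyzhenskaya estimate $|\langle B(w,v),w\rangle|\le \tfrac{\nu}{2}\|w\|_V^2+\tfrac{C}{\nu}\|v\|_V^2\|w\|_H^2$ with a stochastic Gronwall argument, and finally Yamada--Watanabe --- and each of these steps is correct in the 2D setting; in particular your algebraic identity $\langle B(u,u)-B(v,v),w\rangle=\langle B(w,v),w\rangle$ and the treatment of the linear-growth noise are fine. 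Two observations are worth making. First, the Skorokhod/Yamada--Watanabe detour, while valid, is heavier than necessary here: since the 2D Navier--Stokes drift is locally monotone and $G$ is Lipschitz from $H$ to $L_{HS}(U,H)$, the variational framework (Liu--R\"ockner, which is also the natural setting behind the cited result) lets one pass to the limit in the Galerkin scheme by weak convergence and monotonicity arguments on the original probability space, producing the probabilistically strong solution directly and also giving the $\mathcal{C}([0,+\infty);H)$ path regularity, which in your sketch is asserted but not derived (tightness in $\mathcal{C}([0,T];V^*)$ plus $L^\infty(0,T;H)$ bounds only gives weak continuity in $H$ without an extra argument). Second, Assumption \ref{assumption-stochastic2} is not really needed for well-posedness: the Lipschitz condition \eqref{Lipschitz_G} already implies linear growth of $G$, so your remark that case (L) is ``the most demanding'' is harmless but the growth hypotheses only become relevant for the uniform-in-time and long-time estimates used later in the paper, not for this existence and uniqueness statement.
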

\begin{proof}
    See \cite[Theorem 3.6]{FZ23}.
\end{proof}

The solution given in the previous theorem is a strong solution in the probabilistic sense, and it is an adapted process. 
Since we are working in the intersection of analysis and probability, the terminology concerning the notion of solutions can cause some confusion. 
When we talk of a strong solution we understand it in a probabilistic sense, that is we mean that the underlying probability space is given in advance. 
On the other hand, the term weak solution is used as a synonym of martingale solution: we mean that the stochastic basis is constructed as part of the solution. In both cases solutions are weak in the PDE sense.
\\
Mean square estimates for the $H$ and $V$-norm of the solution $u$ are given in Appendix \ref{sec:appA}.

From now on we assume $u_0\in H$ and $ f \in V^*$. We will not repeat these assumptions in the next statements. Only the assumptions on the covariance of the noise will be specified in our next results.

We introduce the data assimilation equation as follows. We consider the solution $u$ of the Navier-Stokes equation \eqref{NS_abs_mult} and define the process $U$ solving the following equation, which we call the data assimilation equation
\begin{equation}
\begin{cases}
\label{data-assi-NS-equation-abs_mult}
{\rm d} U(t)+ \left( \nu A U(t) + B(U(t),U(t)) \right) {\rm d}t  =  G(U(t)){\rm d} W(t) + f {\rm d}t - \mu \Pi R_{h}\left(U(t)-u(t)\right)   {\rm d}t    ,
\\
U(0)=U_0\in H,
\end{cases}
\end{equation}
where $u$ is the solution of the Navier-Stokes equation \eqref{NS_abs_mult} and $R_h$, defined in Section \ref{sect:interpolant-operators}, fulfills \eqref{iden-data-approx}.

Notice that when $R_h$ fulfills condition \eqref{iden-data-approx}, equation \eqref{data-assi-NS-equation-abs_mult} admits a unique strong solution with the same regularity as the solution to equation \eqref{NS_abs_mult}. This can be easily proved by noticing that the additional term $\mu \Pi R_{h}(U(t)-u(t))$ does not crucially impact the well-posedness estimates.

\subsection{Convergence in expectation}
Dealing with a multiplicative noise, the convergence analysis is conducted in expectation, meaning that convergence results hold in expected value. Provided the observational resolution $h$ and the nudging strength $\mu$ satisfy suitable conditions, we show that $\mathbb{E}\left[ \|u(t)-U(t)\|^2_H\right]$
vanishes as $t\to+\infty$. 
The convergence rate depends on the growth of $G$ as specified in Assumption \ref{assumption-stochastic2}.
In the context of data assimilation the estimate $\mathbb{E}\left[ \|u(t)-U(t)\|^2_H\right]$ is often referred to as Foias-Prodi estimate in expected value. 
This estimate shows that the control term $\mu \Pi R_h(U(t)-u(t))$, when the parameters $\mu$ and $h$ are chosen in a proper way, allows to synchronize (in expectation) the solution $u$ of the Navier-Stokes equation \eqref{NS_abs_mult} and 
 the solution $U$ of the data assimilation equation \eqref{data-assi-NS-equation-abs_mult} in the limit as $t \rightarrow +\infty$. 

The main result of this Section reads as follows.
\begin{theorem}
\label{convergence_mult_case}
Let Assumption \ref{assumption-stochastic1} be  satified and assume that $R_h$ satisfies assumption \eqref{iden-data-approx}. 
Let $u$  and $U$ be, respectively, the solution to the Navier-Stokes  equation \eqref{NS_abs_mult} and 
 to the data assimilation  equation \eqref{data-assi-NS-equation-abs_mult}. 
\begin{enumerate}[label=$(\roman{*})$] 
\item  Assume assumption \ref{assumption-stochastic2}(B) holds.
If  $h$  and $\mu$ satisfy the condition 
\begin{equation}
\label{cond_mu_bounded}
%4
\left(L+ \frac{C_B}{\nu^2}\right)< \mu \le \frac{  \nu }{c_0 h^2 },
\end{equation}
where 
\begin{equation}
\label{C_B}
C_B:=2\left(K_B+ \frac{1}{\nu}\|f\|^2_{V^*}\right), 
\end{equation} 
then
$\mathbb{E}\left[\|u(t)-U(t)\|_H^2\right] \rightarrow 0$ exponentially fast as $t \rightarrow + \infty$.
%a
\item   Assume asssumption \ref{assumption-stochastic2}(S) holds.
If $h$  and $\mu$ satisfy the condition 
\begin{equation}
\label{cond_mu_sublinear}
%4
\left(L+ \frac{C_S}{\nu^2}\right)< \mu \leq \frac{  \nu }{c_0 h^2 }, 
\end{equation}
where
\begin{equation}
\label{C_S}
C_S:=2\left(1+ K_S + 
(1-\gamma)\left(\frac{2\gamma}{\lambda_1 \nu}\right)^{\frac{\gamma}{1-\gamma}} \widetilde{K_S}^{\frac{1}{1-\gamma}}
 +
\frac{2}{\nu}\|f\|^2_{V^*}\right) ,
\end{equation} 
then
$\mathbb{E}\left[\|u(t)-U(t)\|_H^2\right] \rightarrow 0$ $p$-polynomially fast as $t \rightarrow + \infty$, for any power $p \in (0, +\infty)$.
\item  Assume assumption \ref{assumption-stochastic2}(L) holds. If  
\begin{equation}
\nu >\frac{6\widetilde{K}_L}{\lambda_1}
\end{equation}
  and 
 $h$  and $\mu$ satisfy the condition 
\begin{equation}
\label{cond_mu_linear}
%4
\left(L+ \frac{C_L}{\nu\left(\nu-\frac{\widetilde{K}_L}{\lambda_1}\right)}\right)<  \mu \le \frac{  \nu }{c_0 h^2 }, 
\end{equation}
where 
\begin{equation}
\label{C_L}
 C_L:=2\left(1+  K_L + \frac{1}{\nu }\|f\|^2_{V^*}\right),
\end{equation} 
then
$\mathbb{E}\left[\|u(t)-U(t)\|_H^2\right] \rightarrow 0$ $p$-polynomially fast as $t \rightarrow + \infty$, for any power $p \in \left(0, \frac{\nu \lambda_1}{4\widetilde{K}_L}-\frac 12 \right)$.
\end{enumerate}
\end{theorem}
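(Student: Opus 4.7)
My plan is to analyze the difference $w := U - u$ through It\^o calculus on $\|w\|_H^2$ and to close the estimate via an integrating-factor argument combined with a priori moment bounds on the Navier--Stokes solution $u$. Subtracting \eqref{NS_abs_mult} from \eqref{data-assi-NS-equation-abs_mult} and applying It\^o's formula gives
\begin{multline*}
d\|w\|_H^2 + 2\nu \|w\|_V^2\,dt + 2\langle B(U,U)-B(u,u), w\rangle\,dt \\
= \|G(U)-G(u)\|_{L_{HS}(U,H)}^2\,dt + 2(w, [G(U)-G(u)]\,dW)_H - 2\mu(w, R_h(w))_H\,dt.
\end{multline*}
I would estimate the right-hand-side term-by-term: the antisymmetry \eqref{B} reduces the trilinear form to $\langle B(w,u), w\rangle$, bounded in 2D by the Ladyzhenskaya--Young inequality as $2|\langle B(w,u), w\rangle| \leq \nu\|w\|_V^2 + \frac{C}{\nu}\|w\|_H^2\|u\|_V^2$; the Lipschitz bound \eqref{Lipschitz_G} gives $\|G(U)-G(u)\|_{L_{HS}(U,H)}^2 \leq L\|w\|_H^2$; and the nudging term, written as $(w, R_h(w))_H = \|w\|_H^2 + (w, R_h(w)-w)_H$ and controlled via \eqref{iden-data-approx} and Young, yields $-2\mu(w, R_h(w))_H \leq -\mu\|w\|_H^2 + \mu c_0 h^2\|w\|_V^2$. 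The upper bound $\mu \leq \nu/(c_0 h^2)$ common to \eqref{cond_mu_bounded}--\eqref{cond_mu_linear} cancels the $\|w\|_V^2$ contributions, leaving the key differential inequality
\[
d\|w\|_H^2 \leq \Bigl(L - \mu + \tfrac{C}{\nu}\|u\|_V^2\Bigr)\|w\|_H^2\,dt + dM_t,
\]
where $M_t$ is a (local) martingale.

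Next, I would remove the random coefficient $\|u\|_V^2$ by introducing the integrating factor $\Phi(t) := \exp\!\bigl((\mu-L)t - \tfrac{C}{\nu}\int_0^t\|u(s)\|_V^2\,ds\bigr)$ and observing that $d(\Phi\|w\|_H^2) \leq \Phi\,dM_t$, so after standard localization $\mathbb{E}[\Phi(t)\|w(t)\|_H^2] \leq \|w(0)\|_H^2$. Two successive Cauchy--Schwarz inequalities then give
\[
\mathbb{E}\|w(t)\|_H^2 \leq \|w(0)\|_H\,e^{-(\mu-L)t/2}\bigl(\mathbb{E}\|w(t)\|_H^4\bigr)^{1/4}\Bigl(\mathbb{E}\bigl[e^{(2C/\nu)\int_0^t\|u(s)\|_V^2\,ds}\bigr]\Bigr)^{1/4},
\]
reducing the convergence problem to sharp exponential or polynomial moment estimates for $u$.

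The three cases then differ precisely in which moments of $u$ are available. In case (B), bounded noise yields exponential moments $\mathbb{E}[e^{\lambda\int_0^t\|u\|_V^2\,ds}] \lesssim e^{\lambda C_B t/(2\nu)}$ for small $\lambda$ together with uniform $L^p$-moments of $\|u(t)\|_H$, and the threshold $\mu - L > C_B/\nu^2$ precisely balances the two exponentials to deliver exponential decay. In case (S), sublinear growth still provides uniform $L^p$-moments for every $p$ but only subexponential control of $\int\|u\|_V^2\,ds$, producing polynomial decay of arbitrary order; a convenient alternative here is to replace the exponential integrating factor by a polynomial one $t^p$ and to use the sharp Young split $\widetilde K_S\|u\|_H^{2\gamma} \leq C_S/2 + \epsilon\lambda_1\|u\|_H^2$ followed by Poincar\'e. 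In case (L), moments $\mathbb{E}\|u(t)\|_H^{2q}$ exist only for $q < \nu\lambda_1/(2\widetilde K_L)$, which forces the structural condition $\nu > 6\widetilde K_L/\lambda_1$ (so that a fourth-moment bound is available) and, after insertion into the above inequality, restricts the admissible rate to $p \in \bigl(0, \nu\lambda_1/(4\widetilde K_L) - \tfrac{1}{2}\bigr)$.

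The hardest part will be establishing the precise moment bounds for $\|u(t)\|_H$ and $\int_0^t\|u\|_V^2\,ds$ with the explicit constants $C_B$, $C_S$, $C_L$, so that the thresholds \eqref{cond_mu_bounded}--\eqref{cond_mu_linear} emerge sharply rather than with some unknown multiplicative loss; these bounds are the content of Appendix \ref{sec:appA} and rely on the standard It\^o--Gr\"onwall machinery, applied carefully in each of the three noise regimes. The remainder of the argument is a robust stochastic extension of the deterministic Foias--Prodi mechanism recalled in Section \ref{sec:det}.
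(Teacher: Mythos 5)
Your opening steps coincide with the paper's Lemma \ref{data-estimate-2}: the It\^o computation for $w=U-u$, the Ladyzhenskaya/Young treatment of $\langle B(w,u),w\rangle$, the splitting of the nudging term via \eqref{iden-data-approx}, the cancellation of the $\|w\|_V^2$ terms under $\mu\le \nu/(c_0h^2)$, and the integrating factor yielding $\mathbb{E}\bigl[\Phi(t)\|w(t)\|_H^2\bigr]\le\|w(0)\|_H^2$ are all exactly what the paper does. The gap lies in how you close the argument. Your double Cauchy--Schwarz reduces everything to the finiteness and growth rate of $\mathbb{E}\bigl[e^{(2C/\nu)\int_0^t\|u(s)\|_V^2\,ds}\bigr]$, i.e.\ to an exponential moment of the time-integrated enstrophy at the \emph{fixed} rate $2C/\nu$ (with $C$ the absolute constant from the trilinear estimate, $C=1$ in the paper's normalization). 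Such exponential moments are not available at that rate in general: even under Assumption \ref{assumption-stochastic2}(B), the exponential martingale inequality (Lemma \ref{lem_mul_2}) only gives $\mathbb{E}\bigl[e^{\beta\int_0^t\|u\|_V^2\,ds}\bigr]\lesssim e^{c\beta t}$ for $\beta$ below a threshold of order $\nu^2\lambda_1/K_B$, so your bound ``for small $\lambda$'' cannot be invoked at $\lambda=2C/\nu$ unless $K_B$ is small compared with $\nu^3\lambda_1$ --- a restriction the theorem does not impose. Under (S) and (L) the noise is unbounded and exponential moments of $\int_0^t\|u\|_V^2\,ds$ with any fixed positive coefficient are not available at all, so the reduction fails outright; your suggestion to ``replace the exponential integrating factor by a polynomial one $t^p$'' does not repair this, because the quantity you must control is still the exponential of the random integral.

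The paper circumvents precisely this obstruction with a stopping-time decomposition rather than moment bounds on $\Phi^{-1}$. It introduces $\tau_{R,\beta,\delta,\mu}$, the first time the process $\frac1\nu\int_0^t\|u\|_V^2\,ds+(L-\tfrac{\mu}{(1+\delta)^2})t-\beta$ exceeds $R$; on $\{\tau=\infty\}$ the integrating-factor estimate gives the exponential bound of Corollary \ref{cor_tau_R_beta}, while on $\{\tau<\infty\}$ it applies Cauchy--Schwarz against the fourth moments of $u$ and $U$ (Lemmata \ref{lem_mul_q}--\ref{lem_mul_q_U}, which is also where the restrictions $\nu>6\widetilde K_L/\lambda_1$ and $p<\tfrac{\nu\lambda_1}{4\widetilde K_L}-\tfrac12$ enter) and bounds $\mathbb{P}(\tau<\infty)$ by Proposition \ref{tau_exp}: exponentially small in $R$ under (B) via the exponential martingale inequality, and only polynomially small in $R$ under (S) and (L) via martingale moment estimates. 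Choosing $R$ proportional to $t$ then yields the exponential rate in case (B) and the $p$-polynomial rates in cases (S) and (L). To make your proposal into a proof you would need to replace your exponential-moment step by this (or an equivalent) tail-probability argument on the event that the enstrophy integral exceeds its linear-in-time bound; as written, the key inequality you rely on is false in the generality of the theorem.
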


To prove the above result, we need some estimates, both for the difference $U-u$ 
and for the solution $u$ of the Navier-Stokes equation \eqref{NS_abs_mult}.
Indeed, as we will see in the next proof, the difference process 
$r=U-u$ fulfills a stochastic equation with noise (unless the noise is additive, see the next section), and the basic estimates on the $H$-norm of $r$ depend on the term $\nu \int_0^t \|u(s)\|_V^2 {\rm d}s$, see \eqref{est1mul}.  
We emphasize once more that, due to the presence of a multiplicative noise, we obtain estimates in the mean square sense. 
Only in the next section, when we consider an additive noise, also pathwise estimates can be obtained.

We begin with a result involving a generic stopping time; later on, we will choose it in a suitable way. 
\begin{lemma}
   \label{data-estimate-2} Let $G$ satisfy Assumptions \ref{assumption-stochastic1} and \ref{assumption-stochastic2},  and $R_h$ satisfy assumption \eqref{iden-data-approx}.
Let $u$ be the solution to the Navier-Stokes equation \eqref{NS_abs_mult}  and $U$ be the solution to the data assimilation equation \eqref{data-assi-NS-equation-abs_mult}. 
\\
If
\begin{equation*}
\label{condi-1-mu2}
0<\mu \le \frac{  \nu }{c_0 h^2 },
\end{equation*}
where $c_0$ is the constant that appears in estimate \eqref{iden-data-approx}, 
then the following estimate holds for any $\delta\ge 0$, any stopping time $\tau$ and any $t \ge 0$:
\begin{align}
\label{error-u-U_mult}
\mathbb{E}\left[e^{\left(\frac{\mu}{1+\delta}-L\right)(t\wedge \tau)-\frac{1}{\nu}\int_0^{t \wedge \tau}\|u(s)\|^2_V\, {\rm d}s}\left\| u(t\wedge \tau)-U(t\wedge \tau) 
\right\|_{H}^2\right] 
&\leq \left\|u_0 - U_0 \right\|_{H}^2.
\end{align}     
\end{lemma}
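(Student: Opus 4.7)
Set $r := u - U$. Subtracting \eqref{data-assi-NS-equation-abs_mult} from \eqref{NS_abs_mult} and using the linearity of $R_h$ gives
\begin{equation*}
{\rm d}r + \nu A r\,{\rm d}t + \bigl[B(u,u) - B(U,U)\bigr]{\rm d}t = -\mu \Pi R_h(r)\,{\rm d}t + \bigl[G(u) - G(U)\bigr]{\rm d}W.
\end{equation*}
I would apply It\^o's formula to $\|r(t)\|_H^2$ and use the antisymmetry \eqref{B} to rewrite $\langle B(u,u) - B(U,U), r\rangle = \langle B(r,u), r\rangle$ (since $\langle B(U,r),r\rangle = 0$). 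This produces the identity
\begin{equation*}
{\rm d}\|r\|_H^2 + 2\nu\|r\|_V^2\,{\rm d}t = \bigl[-2\langle B(r,u), r\rangle - 2\mu(r, \Pi R_h r)_H + \|G(u) - G(U)\|_{L_{HS}(U,H)}^2\bigr]{\rm d}t + {\rm d}M_t,
\end{equation*}
where $M_t$ denotes the It\^o integral.

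The heart of the proof is to estimate the three bracketed terms so that every $\|r\|_V^2$ contribution is reabsorbed into the $2\nu\|r\|_V^2$ on the left. The 2D Ladyzhenskaya inequality combined with Young gives $2|\langle B(r,u), r\rangle| \le \nu\|r\|_V^2 + \tfrac{1}{\nu}\|u\|_V^2\|r\|_H^2$. The noise term is bounded by $L\|r\|_H^2$ thanks to \eqref{Lipschitz_G}. For the nudging term I would decompose $-2\mu(r, \Pi R_h r)_H = -2\mu\|r\|_H^2 + 2\mu(r, r - R_h r)_H$, apply the interpolation bound \eqref{iden-data-approx}, and use Young with a free parameter $\eta>0$ together with the hypothesis $\mu c_0 h^2 \le \nu$ to arrive at
\begin{equation*}
-2\mu(r, \Pi R_h r)_H \le -\mu\bigl(2 - \tfrac{1}{\eta}\bigr)\|r\|_H^2 + \eta\,\nu\,\|r\|_V^2.
\end{equation*}
Choosing $\eta \in \bigl[\tfrac{1+\delta}{1+2\delta},\,1\bigr]$, a window that is nonempty for every $\delta \ge 0$, simultaneously absorbs the leftover $\|r\|_V^2$ (through $\eta \le 1$) and guarantees $\mu(2-\tfrac{1}{\eta}) \ge \tfrac{\mu}{1+\delta}$.

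Collecting everything, the viscous dissipation cancels exactly and I obtain
\begin{equation*}
{\rm d}\|r\|_H^2 \le \Bigl[L - \tfrac{\mu}{1+\delta} + \tfrac{1}{\nu}\|u\|_V^2\Bigr]\|r\|_H^2\,{\rm d}t + {\rm d}M_t.
\end{equation*}
Defining $\phi(t) := \bigl(\tfrac{\mu}{1+\delta} - L\bigr)t - \tfrac{1}{\nu}\int_0^t \|u(s)\|_V^2\,{\rm d}s$, It\^o's formula applied to $e^{\phi(t)}\|r(t)\|_H^2$ cancels the drift exactly and leaves
\begin{equation*}
{\rm d}\bigl(e^{\phi(t)}\|r(t)\|_H^2\bigr) \le e^{\phi(t)}\,{\rm d}M_t.
\end{equation*}

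The main obstacle is the passage from this local-supermartingale structure to an expectation bound at the arbitrary stopping time $t \wedge \tau$, since neither $e^{\phi}$ nor $\|r\|_H^2$ is bounded a priori. I would handle it by a standard localization argument: introduce a reducing sequence $\{\sigma_n\}$ of stopping times with $\sigma_n \uparrow \infty$ so that the stopped stochastic integral is a true martingale, apply the optional sampling theorem at $t \wedge \tau \wedge \sigma_n$ to obtain $\mathbb{E}\bigl[e^{\phi(t\wedge\tau\wedge\sigma_n)}\|r(t\wedge\tau\wedge\sigma_n)\|_H^2\bigr] \le \|u_0 - U_0\|_H^2$, and then let $n\to\infty$ via Fatou's lemma using the nonnegativity of the integrand and the almost sure convergence $t\wedge\tau\wedge\sigma_n \to t\wedge\tau$. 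The delicate calibration lies in the interpolant estimate: the free parameter $\eta$ must be tuned against $\delta$ to generate exactly the exponent $\tfrac{\mu}{1+\delta} - L$ stated in \eqref{error-u-U_mult}.
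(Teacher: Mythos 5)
Your proposal is correct and follows essentially the same route as the paper: the same It\^o computation for $\|r\|_H^2$ with the antisymmetry \eqref{B}, the Ladyzhenskaya--Young bound on the trilinear term, the Lipschitz bound \eqref{Lipschitz_G}, the interpolation estimate \eqref{iden-data-approx} with $\mu c_0 h^2\le\nu$, the exponential weight $e^{\phi(t)}$, and the standard localization/Fatou argument at $t\wedge\tau$. The only (harmless) cosmetic difference is that you produce the factor $\tfrac{\mu}{1+\delta}$ by tuning the Young parameter $\eta$ in the nudging estimate, whereas the paper takes $\eta=1$, keeps the full coefficient $\mu$, and then splits $\mu=\tfrac{\mu}{1+\delta}+\tfrac{\delta\mu}{1+\delta}$, discarding the nonnegative remainder.
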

\begin{proof}
    Given $u$ and $U$ satisfying the equations \eqref{NS_abs_mult} and \eqref{data-assi-NS-equation-abs_mult}, respectively,  we obtain the evolution of the difference $r(t)=U(t)-u(t)$ 
\begin{equation*}
\begin{cases}
{\rm d} r(t) + \left[\nu A r(t) + \left( B(U(t),U(t)) - B(u(t),u(t)) \right) \right]\, {\rm d}t = - \mu \Pi R_h(r(t)) + (G(U(t))-G(u(t)))\, {\rm d}W(t)  \\ 
r(0)=U_0 - u_0 \in H.
\end{cases}
\end{equation*}
We apply the It\^o formula to the functional $\|\cdot\|^2_H$. 
Using the fact that $B(U,U) - B(u,u)= B(U,r)+B(r,u)$ and bearing in mind \eqref{B}, we obtain for every $t \ge 0$, $\mathbb{P}$-a.s.,
\begin{align*}
 {\rm d}\|r(t)\|^2_H &+ 2\nu \|r(t)\|^2_V\,{\rm d}t 
 \\
 &= \left[ -2 \langle r(t),B(r(t),u(t)) \rangle   - 2 \mu \langle \Pi R_h(r(t)),r(t) \rangle +  \|G(U(t))-G(u(t))\|^2_{L_{HS}(U,H)} \right]\, {\rm d}t
 \\
 &+ \langle r(t), [G(U(t))-G(u(t))]\, {\rm d}W(t)   \rangle.
\end{align*}
Thanks to the Gagliardo-Nirenberg, Cauchy-Schwartz and Young inequalities we estimate
\begin{align*}
 -2& \langle r(t),B(r(t),u(t)) \rangle   - 2 \mu \langle \Pi R_h(r(t)),r(t) \rangle
 \\
 &\le 2  \left| \langle r(t), r(t) \cdot \nabla u(t) \rangle \right| - 2 \mu \left\| r(t) \right\|_{H}^2 - 2 \mu \langle \Pi R_h(r(t))-r(t),r(t)\rangle
\\ 
& \le  2 \left\| r(t) \right\|_{[L^4(D)]^2}^2 \left\| \nabla u(t) \right\|_{H}  - 2 \mu \left\| r(t) \right\|_{H}^2 + 2 \mu \left\| r(t) \right\|_{H} \left\|\Pi R_{h}(r(t)) - r(t) \right\|_{H} \\ 
& \le  2 \left\| r(t) \right\|_{H} \left\| \nabla r(t) \right\|_{H}   \left\| \nabla u(t) \right\|_{H}    
  - 2 \mu \left\| r(t) \right\|_{H}^2+2 \mu \left\| r(t) \right\|_{H} \left\| \Pi R_{h}(r(t)) - r(t) \right\|_{H} \\ 
& \le 2 \left( \frac{\nu}{2} \left\| \nabla  r(t) \right\|_{H}^2  + \frac{1}{2 \nu} \left\| r(t) \right\|_{H}^2 \left\| \nabla u(t) \right\|_{H}^2    \right) - 2 \mu \left\| r(t) \right\|_{H}^2
 + 2 \mu \left( \frac{1}{2}  \left\| r(t) \right\|_{H}^2  +
\frac{1}{ 2 } \left\|\Pi R_{h}(r(t)) - r(t) \right\|_{H}^2  \right)  \\ 
 & \le \nu \left\| \nabla  r(t) \right\|_{H}^2  + \frac{1}{\nu} \left\| r(t) \right\|_{H}^2 \left\| \nabla u(t) \right\|_{H}^2 - \mu  \left\| r(t) \right\|_{H}^2 + \mu \left\|\Pi R_h(r(t)) -r(t) \right\|_{H}^2.
 \end{align*}
Thus we obtain the estimate 
\begin{align*}
    {\rm d}\|r(t)\|^2_H &+ \left[\nu \|r(t)\|^2_V + \mu \|r(t)\|^2_H\right] \,{\rm d}t
    \le \langle r(t), [G(U(t))-G(u(t))]\, {\rm d}W(t)   \rangle
    \\
    &+\left[\frac{1}{\nu}\|u(t)\|_V^2 \|r(t)\|^2_H + \mu \|\Pi R_h(r(t))-r(t))\|^2_H + \|G(U(t))-G(u(t))\|^2_{L_{HS}(U,H)} \right]\, {\rm d}t.
\end{align*}
Exploiting the fact that $R_h$ satisfies \eqref{iden-data-approx} we have 
\[
\left\| \Pi R_h(r(t)) - r(t) \right\|_{H}^2=\left\| \Pi (R_h(r(t)) - r(t))\right\|_{H}^2 
\le \left\| R_h(r(t)) - r(t) \right\|_{[L^2(D)]^2}^2 
\le c_0 h^2  \left\| r(t) \right\|_{V}^2.
\]
Therefore, exploiting Assumption \ref{assumption-stochastic1} we obtain the estimate  
\begin{equation*}
{\rm d} \left\| r(t) \right\|_{H}^2 + \left(\nu - c_0\mu h^2\right) \left\| r(t) \right\|_{V}^2  + \left(\mu-L-\frac{1}{\nu}\|u(t)\|^2_V\right)  \left\| r(t) \right\|_{H}^2 dt 
 \le  \langle r(t), [G(U(t))-G(u(t))]\, {\rm d}W(t)  \rangle.
\end{equation*}
If we assume $\mu \leq \frac{  \nu }{c_0 h^2 }$, we thus obtain 
\begin{equation}
\label{est1mul}
{\rm d} \left\| r(t) \right\|_{H}^2  + \left(\mu-L-\frac{1}{\nu}\|u(t)\|^2_V\right)  \left\| r(t) \right\|_{H}^2 dt 
 \le  \langle r(t), [G(U(t))-G(u(t))]\, {\rm d}W(t)  \rangle.
\end{equation}
Let us set 
\[
M(t):=\int_0^t  \langle r(s), [G(U(s))-G(u(s))]\, {\rm d}W(s)  \rangle, \quad t \ge 0.
\]
Fix $\delta\ge0$ and define
\[ 
\Gamma (t):= \left(\frac{\mu}{1 + \delta}-L\right)t-\frac{1}{\nu}\int_0^t\|u(s)\|^2_V\,{\rm d}s, \quad t \ge 0.
\]
We rewrite \eqref{est1mul} as 
\begin{equation*}
{\rm d}\|r(t)\|^2_H + \left(\frac{\delta\mu}{1+\delta} \|r(t)\|^2_H+ \Gamma^\prime(t)\|r(t)\|^2_H\right)  {\rm d}t
 \le {\rm d}M(t).
\end{equation*}
Multiplying  both members of the above expression by $e^{\Gamma(t)}$ and noticing that 
${\rm d}(e^{\Gamma(t)} \|r(t)\|^2_H)
=
e^{\Gamma(t)} {\rm d}\|r(t)\|^2_H+ \Gamma^\prime(t)e^{\Gamma(t)} \|r(t)\|^2_H {\rm d}t$, 
we get
\begin{equation}\label{Ito-per-prodotto}
{\rm d}\left(e^{\Gamma(t)}\|r(t)\|^2_H\right)+ \frac{\delta\mu}{1+\delta}e^{\Gamma(t)}\|r(t)\|^2_H {\rm d}t
\le e^{\Gamma(t)}{\rm d}M(t)
\end{equation}
i.e.
\[
e^{\Gamma(t)}\|r(t)\|^2_H +\frac {\delta\mu}{1+\delta} \int_0^t e^{\Gamma(s)}\|r(s)\|^2_H {\rm d}s
\le
\|r_0\|^2_H +\int_0^t e^{\Gamma(s)}{\rm d}M(s).
\]
In particular
\[
e^{\Gamma(t)}\|r(t)\|^2_H 
\le
\|r_0\|^2_H +\int_0^t e^{\Gamma(s)}{\rm d}M(s).
\]
Using usual strategies for local martingales, considering a stopping time $\tau$ and taking the expected value, we infer 
\begin{equation}
\label{star1}
\mathbb E \left[e^{\Gamma(t\wedge \tau)} \|r(t\wedge \tau)\|_H^2\right]
\le 
\|r_0\|_H^2,
\end{equation}
that is \eqref{error-u-U_mult}.
\end{proof}

In order to control the integrating factor that appears in \eqref{error-u-U_mult}, we will make a suitable choice of the stopping time. For $R, \beta, \delta, \mu>0$, let
\begin{equation}
\label{tau_R_beta}
\tau_{R, \beta, \delta\, \mu} := 
\inf \left\{r \ge 0: \frac{1}{\nu} \int_0^r \|u(s)\|^2_V\, {\rm d}s +\left(L- \frac{\mu}{(1+\delta)^2} \right)r - \beta \ge R\right\}
\end{equation} 
and $\tau_{R, \beta, \delta, \mu}=+\infty$ if the  set is empty, i.e. if 
\begin{equation}\label{tau=infty}
\frac{1}{\nu} \int_0^t \|u(s)\|^2_V\, {\rm d}s +\left(L- \frac{\mu}{(1+\delta)^2} \right)t - \beta < R,
\qquad \forall \ t\ge 0.
\end{equation}
The parameter $\beta$ will be useful to track the dependence on the initial velocities 
and on the forcing terms in subsequent estimates of $\tau_{R, \beta, \delta, \mu}$; see Proposition \ref{tau_exp} below.

From the definition \eqref{tau_R_beta} of $\tau_{R,\beta,\delta, \mu}$  we immediately get the following corollary of Lemma \ref{data-estimate-2}.
\begin{coro}
\label{cor_tau_R_beta}
Under the same conditions as in  Lemma \ref{data-estimate-2}, for any $u_0, U_0 \in H$ and any $R, \beta, \delta > 0$ we have
\begin{equation*}
\mathbb{E} \left[{\pmb 1}_{(\tau_{R, \beta,\delta,\mu}=+\infty)}\|u(t)-U(t)\|^2_H\right] 
\le 
e^{R+ \beta -\frac{\delta\mu}{(1+\delta)^2} t}\|u_0-U_0\|^2_H.
\end{equation*}
\end{coro}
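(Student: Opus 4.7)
The plan is to apply Lemma~\ref{data-estimate-2} with the specific stopping time $\tau = \tau_{R,\beta,\delta,\mu}$ defined in \eqref{tau_R_beta}, then exploit the defining inequality of $\tau$ on the event $\{\tau = +\infty\}$ to convert the random exponential integrating factor into a deterministic lower bound.

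First I would observe that on the event $\{\tau_{R,\beta,\delta,\mu} = +\infty\}$ we have $t \wedge \tau = t$ for every $t \ge 0$, so the left-hand side of \eqref{error-u-U_mult} restricted to this event is
\[
\mathbb{E}\left[ {\pmb 1}_{\{\tau=+\infty\}} e^{\left(\frac{\mu}{1+\delta}-L\right)t-\frac{1}{\nu}\int_0^{t}\|u(s)\|^2_V\, {\rm d}s}\| u(t)-U(t) \|_{H}^2\right] \le \|u_0-U_0\|_H^2.
\]
Since the integrand is non-negative, this restriction is indeed dominated by the full expectation appearing in Lemma~\ref{data-estimate-2}.

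Next I would use the defining property \eqref{tau=infty}: on $\{\tau = +\infty\}$,
\[
\frac{1}{\nu}\int_0^t \|u(s)\|_V^2\, {\rm d}s \;<\; R + \beta + \left(\frac{\mu}{(1+\delta)^2} - L\right) t.
\]
Substituting this into the exponent yields the pointwise (on this event) lower bound
\[
\left(\frac{\mu}{1+\delta}-L\right)t - \frac{1}{\nu}\int_0^t \|u(s)\|_V^2\, {\rm d}s \;>\; \left(\frac{\mu}{1+\delta} - \frac{\mu}{(1+\delta)^2}\right) t - R - \beta = \frac{\delta\mu}{(1+\delta)^2}\,t - R - \beta,
\]
where the last equality uses the elementary identity $\frac{1}{1+\delta} - \frac{1}{(1+\delta)^2} = \frac{\delta}{(1+\delta)^2}$. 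This is the key algebraic step; there is nothing genuinely hard here, but one must be careful that the cancellation of the $L$ terms between the exponent in \eqref{error-u-U_mult} and the stopping-time criterion is what makes the proof work.

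Combining the two observations and factoring out the (now deterministic) quantity $e^{\frac{\delta\mu}{(1+\delta)^2}t - R - \beta}$ from the expectation gives
\[
e^{\frac{\delta\mu}{(1+\delta)^2}t - R - \beta}\,\mathbb{E}\left[{\pmb 1}_{\{\tau = +\infty\}} \|u(t)-U(t)\|_H^2\right] \le \|u_0-U_0\|_H^2,
\]
and rearranging produces the claimed estimate. The proof is therefore essentially a short deterministic bookkeeping argument built on top of Lemma~\ref{data-estimate-2}; the only subtlety is verifying the exponent cancellation above.
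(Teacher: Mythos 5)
Your proposal is correct and follows essentially the same route as the paper: apply \eqref{error-u-U_mult} (i.e.\ \eqref{star1}) with $\tau=\tau_{R,\beta,\delta,\mu}$, note that on $\{\tau=+\infty\}$ one has $t\wedge\tau=t$ and, by \eqref{tau=infty}, the exponent $\Gamma(t)$ is bounded below by $\frac{\delta\mu}{(1+\delta)^2}t-\beta-R$ (the $L$ terms cancelling exactly as you observe), after which the deterministic factor is pulled out of the expectation. This matches the paper's argument step for step.
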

%PROOF
\begin{proof}
From \eqref{tau=infty} we know that 
$ \frac{\delta\mu}{(1+\delta)^2} t-\beta-R\le \Gamma(t)$ for any $t\ge 0$, when $\tau_{R, \beta, \delta, \mu}=+\infty$. Hence, from \eqref{star1} we get
\[
\mathbb E\left( {\pmb 1}_{(\tau_{R, \beta, \delta, \mu}=+\infty)} e^{\frac{\delta\mu}{(1+\delta)^2} t-\beta-R}\|u(t)-U(t)\|^2_H\right)
\le
\mathbb E\left( {\pmb 1}_{(\tau_{R, \beta,\delta, \mu}=+\infty)} e^{\Gamma(t)}\|u(t)-U(t)\|^2_H\right)
\le \|u_0-U_0\|^2_H
\]
and the thesis follows.
\end{proof}
In the following result,  for a suitable choice of the parameters $\beta, \delta$ and $\mu$, we estimate the probability $\mathbb{P}(\tau_{R, \beta,\delta, \mu}<+\infty)$ in terms of the parameter $R$. We obtain different decays according to the different assumptions on the operator $G$.

\begin{prop}
\label{tau_exp}
Let us assume assumption \ref{assumption-stochastic1} holds.
Given arbitrary $R>0$ and $\delta>0$, consider the stopping time $\tau_{R, \beta, \delta, \mu}$ defined in \eqref{tau_R_beta}, 
where $u$ is the solution of the Navier-Stokes equation \eqref{NS_abs_mult} starting from $u_0$.
\begin{itemize}
\item [(i)]  Assume assumption \ref{assumption-stochastic2}(B) holds. Let $C_B$ be the constant defined in \eqref{C_B}.
If 
\begin{equation}
\label{cond_beta_i}
\beta\ge\frac{2}{\nu^2} \|u_0\|^2_H
\end{equation}
and
\begin{equation}
\label{cond_mu_i}
\mu \ge (1+\delta)^2\left(L + \frac{C_B}{\nu^2}\right),
%:
\end{equation}
then 
\begin{equation*}
\label{tau_est_i}
\mathbb{P}(\tau_{R, \beta, \delta, \mu}< +\infty) 
\le 
e^{-\frac{\nu^3\lambda_1}{16K_B}R}.
\end{equation*}
%where $C=C(\lambda_1,\nu, K_1)$ is a positive constant independent of $R$, $\beta$ and $u_0$.
\item [(ii)]  Assume assumption \ref{assumption-stochastic2}(S) holds. Let $C_S$ be the constant defined in \eqref{C_S}.
If 
\begin{equation}
\label{cond_beta_ii}
\beta\ge\frac{1}{\nu^2} \left(\|u_0\|^2_H+ C_S\right)
\end{equation}
and 
\begin{equation}
\label{cond_mu_ii}
\mu \ge  (1+\delta)^2 \left(L+\frac{C_S}{\nu^2} \right),
\end{equation}
then 
\begin{equation*}
\label{tau_est_iii}
\mathbb{P}(\tau_{R, \beta, \delta, \mu}< +\infty) 
\le 
 \frac{C(1+ \|u_0\|_H^{4(p+1)})}{R^p},
 \end{equation*}
 for any $p>0$,  where $C$ is a positive constant depending on $p, \nu, K_S, \tilde{K}_S, \gamma, \|f\|_{V^*}$
 and  independent of $R, \beta$ and $u_0$.
 \item [(iii)] Let $\nu > \frac{2\widetilde K_L}{\lambda_1}$ and assumption \ref{assumption-stochastic2}(L)  holds. Let $C_L$ be the constant defined in \eqref{C_L}.
If
\begin{equation*}
%\label{cond_beta_ii}
\beta\ge\frac{1}{\nu\left(\nu-\frac{\widetilde{K}_L}{\lambda_1}\right)} \left(\|u_0\|^2_H+ C_L\right)
\end{equation*}
and 
\begin{equation}
\label{cond_mu_iii}
\mu \ge (1+\delta)^2 \left(L+ \frac{C_L}{\nu\left(\nu-\frac{\widetilde{K}_L}{\lambda_1}\right)}\right),
\end{equation}
then
\begin{equation*}
\label{tau_est_ii}
\mathbb{P}(\tau_{R, \beta, \delta, \mu}<+ \infty)
\le 
 \frac{C(1+ \|u_0\|_H^{4(p+1)})}{R^p},
 \end{equation*}
 for any $p\in \left(0, \frac{\nu \lambda_1}{4\widetilde{K}_L}-\frac 12 \right)$,  where $C$ is a positive constant depending on $\lambda_1,p, \nu, K_L, \|f\|_{V^*}$
 and  independent of $R, \beta$ and $u_0$.
\end{itemize}

\end{prop}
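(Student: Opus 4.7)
The plan is to combine an It\^o energy estimate for $\|u\|_H^2$ with the continuity of the integral defining $\tau_{R,\beta,\delta,\mu}$, so as to reduce $\mathbb{P}(\tau_{R,\beta,\delta,\mu}<+\infty)$ to a tail estimate for the continuous martingale
\[
M(t) := \int_0^t \bigl(u(s),\, G(u(s))\,{\rm d}W(s)\bigr)_H, \qquad t \ge 0.
\]
The specific choices of $\beta$ and $\mu$ in the three cases are designed precisely so that, on $\{\tau_{R,\beta,\delta,\mu}<+\infty\}$, this martingale must reach a level of order $R$.

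The first step is to apply It\^o to $\|u\|_H^2$, estimate $2|\langle f,u\rangle|\le \nu\|u\|_V^2+\|f\|_{V^*}^2/\nu$, and use Assumption~\ref{assumption-stochastic2} on $\|G(u)\|_{L_{HS}(U,H)}^2$. In case (S) a Young inequality turns $\widetilde K_S\|u\|_H^{2\gamma}$ into a small fraction of $\|u\|_V^2$ plus a constant (which is the origin of the extra ``$+1$'' in $C_S$); in case (L) the Poincar\'e inequality converts $\widetilde K_L\|u\|_H^2$ into $(\widetilde K_L/\lambda_1)\|u\|_V^2$, which is the reason for the standing requirement $\nu>2\widetilde K_L/\lambda_1$. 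After dropping $\|u(t)\|_H^2\ge 0$, each case yields an inequality of the form
\[
\frac{1}{\nu}\int_0^t \|u(s)\|_V^2\,{\rm d}s \;\le\; A\|u_0\|_H^2 + \tfrac{A\,C_\star}{2}\,t + 2A\,M(t),
\]
with $A=1/\nu^2$ in (B) and (S), and $A=1/[\nu(\nu-\widetilde K_L/\lambda_1)]$ in (L), and $C_\star$ matching the constant appearing in the corresponding hypothesis on $\mu$. By continuity of $r\mapsto\int_0^r\|u(s)\|_V^2\,ds$, on $\{\tau<+\infty\}$ (writing $\tau:=\tau_{R,\beta,\delta,\mu}$) the stopping condition is attained with equality; comparing this with the display above and using the hypotheses $\mu/(1+\delta)^2-L\ge AC_\star$ and $\beta\ge 2A\|u_0\|_H^2$, the deterministic drifts cancel and one gets
\[
M(\tau) \;\ge\; \frac{R}{2A} + \frac{C_\star\,\tau}{4}\qquad \text{on } \{\tau<+\infty\}.
\]

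The final step is the tail estimate for $M(\tau)$. In case (B), Assumption~\ref{assumption-stochastic2}(B) together with Poincar\'e gives $\langle M\rangle(t)\le (K_B/\lambda_1)\int_0^t\|u(s)\|_V^2\,ds$, so on $\{\tau<+\infty\}$ the bracket becomes affine in $\tau$ via the stopping identity; the exponential-martingale inequality $\mathbb{P}(\sup_t[M(t)-\tfrac{\lambda}{2}\langle M\rangle(t)]\ge a)\le e^{-\lambda a}$, applied with $\lambda$ of order $\lambda_1\nu/K_B$ (chosen so that the $\tau$-penalty is dominated by the $C_B\tau/4$ gain above) and $a$ of order $\nu^2 R$, yields (i) after optimization. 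In cases (S) and (L) the quadratic variation is no longer affinely controlled by the stopping datum, so I would instead combine Markov with BDG at an integer moment $p$,
\[
\mathbb{P}(\tau<+\infty)\le \mathbb{P}\bigl(M(\tau)\ge R/(2A)\bigr) \le \frac{c_p(2A)^{2p}}{R^{2p}}\,\mathbb{E}\langle M\rangle(\tau)^{p},
\]
and control $\mathbb{E}\langle M\rangle(\tau)^p$ by the uniform-in-time $2m$-th moment bounds for $\|u\|_H$ of Appendix~\ref{sec:appA}, with $m$ an appropriate multiple of $p$ depending on $\gamma$. In the linear case, a power-It\^o formula on $\|u\|_H^{2m}$ confirms that these bounds are only available below a critical exponent determined by the ratio $\nu\lambda_1/\widetilde K_L$, which after matching $m$ to $p$ produces the stated restriction $p<\nu\lambda_1/(4\widetilde K_L)-1/2$. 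The main technical work, and the main obstacle, lies in this last step: one must track carefully how BDG, Poincar\'e and the Appendix~\ref{sec:appA} moment bounds combine so as to yield exactly the prefactor $1+\|u_0\|_H^{4(p+1)}$; the linear case in particular encodes the very dissipation-versus-noise balance that already underlies the threshold in Theorem~\ref{convergence_mult_case}~(iii).
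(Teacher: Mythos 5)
Your reduction of the event $\{\tau_{R,\beta,\delta,\mu}<+\infty\}$ to the martingale lower bound $M(\tau)\ge \frac{R}{2A}+\frac{C_\star\tau}{4}$ is correct and is essentially the paper's own first step (the inclusion of $A_{R,\beta,\delta,\mu}$ into a supremum event for the energy functional). The gaps are in both of your concluding tail estimates. In case (B), you substitute the stopping identity into the bracket: at $\tau$ one has $\int_0^\tau\|u\|_V^2\,{\rm d}s=\nu\bigl[R+\beta+\bigl(\tfrac{\mu}{(1+\delta)^2}-L\bigr)\tau\bigr]$, so with your normalization $\tfrac{\lambda}{2}\langle M\rangle(\tau)\le \tfrac{\lambda K_B\nu}{2\lambda_1}\bigl[R+\beta+\bigl(\tfrac{\mu}{(1+\delta)^2}-L\bigr)\tau\bigr]$. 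With $\lambda$ of order $\lambda_1\nu/K_B$ the $\tau$-penalty is of order $\nu^2\bigl(\tfrac{\mu}{(1+\delta)^2}-L\bigr)\tau\ge C_B\tau$, which is \emph{not} dominated by the gain $C_B\tau/4$ (and $\mu$ has no upper bound here), while the constant part $\sim\tfrac{\nu^2}{2}(R+\beta)$ cancels the entire gain $\tfrac{\nu^2R}{2}$ and introduces $\beta$, which is only bounded below; no bound of the form $e^{-cR}$ uniform in $\beta$ and $\mu$, as claimed in (i), can come out of this. The paper's Lemma \ref{lem_mul_2} avoids exactly this: one never feeds the stopping datum into the bracket, but keeps half of the dissipation and uses Poincar\'e plus (B) to get $[M](t)\lesssim\frac{K_B}{\lambda_1}\int_0^t\|u\|_V^2\,{\rm d}s$ pathwise, so that $\frac{\nu}{2}\int_0^t\|u\|_V^2$ dominates $\gamma[M](t)$ with $\gamma$ an absolute multiple of $\nu\lambda_1/K_B$; the exponential-martingale inequality is then applied to $\sup_t[M(t)-\gamma[M](t)]$ at level $\bar R=\tfrac{\nu^2}{2}R$, which is why $C_B$ in \eqref{C_B} carries the factor $2$ and the exponent is $\tfrac{\nu^3\lambda_1}{16K_B}$.

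In cases (S) and (L) the displayed chain $\mathbb{P}(\tau<+\infty)\le \frac{c_p(2A)^{2p}}{R^{2p}}\,\mathbb{E}\bigl[\langle M\rangle(\tau)^{p}\bigr]$ is vacuous: $\tau$ is an unbounded stopping time which, in the regime of interest, is $+\infty$ with probability close to one, and $\mathbb{E}\langle M\rangle(t)$ grows linearly in $t$ (Lemma \ref{lem_mul_q} gives uniform moments of $\|u\|_H$, not of the time integral), so $\mathbb{E}\bigl[\langle M\rangle(\tau)^p\bigr]$ is not finite; truncating at $\tau\wedge T$ and letting $T\to+\infty$ blows up like $T^{p}$. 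The linear gain $\frac{C_\star\tau}{4}$, which you discard at precisely this step, is what must be retained to offset the growth of the bracket. The paper handles this through the estimates in probability of Lemmata \ref{lem_mul_3} and \ref{lem_mul_4} (based on \cite[Lemma B.7]{DPSZ25}): one bounds $\mathbb{P}\bigl(\sup_{t\ge T}[M(t)-t-2]\ge R\bigr)$ by the series $\sum_{m\ge\lfloor T\rfloor}\mathbb{E}\bigl[[M](m+1)^{q/2}\bigr]/(R+m+2)^{q}$, estimates $\mathbb{E}\bigl[[M](m+1)^{q/2}\bigr]\lesssim (m+1)^{q/2}\bigl(1+\|u_0\|_H^{2q}\bigr)$ by H\"older and the uniform moment bounds, and sums the convergent series for $q>2$; choosing $q=2(p+1)$ produces the $R^{-p}$ decay and the prefactor $1+\|u_0\|_H^{4(p+1)}$, and the constraint $q<1+\frac{\nu\lambda_1}{2\widetilde K_L}$ in Lemma \ref{lem_mul_q}(iii) gives exactly the range $p<\frac{\nu\lambda_1}{4\widetilde K_L}-\frac12$ you correctly anticipated. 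So your diagnosis of where the restriction on $p$ comes from is right, but without a time-localization of this type (or an equivalent device exploiting the $\tau$-gain) the final step of your argument does not go through.
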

%PROOF
\begin{proof}
For the proof one argues similarly as in the proof of \cite[Proposition 4.3]{FZ23}. For the sake of clarity we provide here the proof of statement (i). 

Keeping in mind the definition \eqref{tau_R_beta} of the stopping time $\tau_{R, \beta, \delta, \mu}$,  we introduce the set
\begin{equation*}
A_{R, \beta, \delta, \mu}= \left\{\sup_{r \ge 0} \left[\frac{1}{\nu} \int_0^r \|u(s)\|^2_V\, {\rm d}s +\left(L-\frac{\mu }{(1+\delta)^2}\right)r - \beta\right] \ge R\right\}
\end{equation*}
so that $\mathbb{P}(\tau_{R, \beta, \delta, \mu}< +\infty) \le \mathbb{P}(A_{R, \beta, \delta, \mu})$. 
We write the complementary set of $A_{R, \beta, \delta, \mu}$ as 
\begin{equation*}
A_{R, \beta, \delta, \mu}^c=\left\{ \frac \nu 2  \int_0^r\|u(s)\|^2_V\, {\rm d}s < \frac{\nu^2}2 \left[\left(\frac{\mu}{(1+\delta)^2}-L\right)r+\beta +R\right] \text{ for any }\ r \ge 0
\right\}.
\end{equation*}
If $\mu$ satisfies condition \eqref{cond_mu_i}, 
then 
\begin{equation*}
%\label{condition_lambda_N_i}
\frac{\nu^2}2 \left( \frac{\mu}{(1+\delta)^2}-L\right)\ge \frac{C_B}{2}\equiv K_B+ \frac{1}{\nu }\|f\|^2_{V^*}.
\end{equation*}
We choose $\beta$ as in \eqref{cond_beta_i} and set
$
\bar R:=\frac{\nu^2}2 R$. For this choice of parameters we get 
\begin{equation*}
A_{R, \beta, \delta, \mu}^c 
\supseteq 
\left\{ \frac \nu 2 \int_0^r \|u(s)\|^2_V\, {\rm d}s <\left( K_B+ \frac{1}{\nu } \|f\|^2_{V^*} \right)r+ \bar R + \|u_0\|^2_H  \text{ for any } r \ge 0\right\}
\end{equation*}
i.e.
\[
A_{R, \beta, \delta, \mu} \subseteq 
\left\{ \sup_{r\ge 0} \left[ \frac \nu 2 \int_0^r \|u(s)\|^2_V\, {\rm d}s -\left( K_B+ \frac{1}{\nu} \|f\|^2_{V^*}\right)r-\|u_0\|^2_H \right] \ge \bar R\right\} .
\]
From Lemma  \ref{lem_mul_2}  we therefore conclude that 
\begin{align*}
 \mathbb{P}(A_{R, \beta, \delta, \mu}) \le 
 e^{-\frac{\nu\lambda_1}{8K_B}\bar R}.
 \end{align*}
 Since $\mathbb{P}(\tau_{R, \beta, \delta, \mu}< +\infty) \le \mathbb{P}(A_{R, \beta, \delta, \mu})$, 
 keeping in mind the definition of $\bar R$, the thesis follows.
 
The proof of statements (ii) and (iii) is similar and it follows the lines of the proof of \cite[Proposition 4.3]{FZ23}: instead of exploiting Lemma \ref{lem_mul_2} one uses the estimates in probability of Lemmata \ref{lem_mul_3} and \ref{lem_mul_4}, respectively, obtaining a polynomial decay in the parameter $R$.
\end{proof}

We have now all the ingredients to prove Theorem \ref{convergence_mult_case}, which follows as a consequence of Corollary \ref{cor_tau_R_beta} and Proposition \ref{tau_exp}.
%THEOREM

\begin{proof}[Proof of Theorem \ref{convergence_mult_case}]
The structure of the proof is the same under Assumptions \ref{assumption-stochastic2} (B), (S) and (L). 
\\
By means of the H\"older and the Young inequalities, invoking Corollary \ref{cor_tau_R_beta} and Lemmata  \ref{lem_mul_q}--\ref{lem_mul_q_U} with $q=4$, \begin{footnote}{Notice that considering $q=4$ in Lemmata \ref{lem_mul_q}-\ref{lem_mul_q_U} requires to impose the condition $1+ \frac{\nu \lambda_1}{2\widetilde{K}_L}>4$, equivalent to $ \nu >\frac{6\widetilde K_L}{\lambda_1}$, when working under Assumption \ref{assumption-stochastic2}(L).}\end{footnote} we infer, for any $R, \beta, \delta>0$ and $0<\mu \le \frac{  \nu }{c_0 h^2 }$, 
\begin{align}
\label{sti_diff_proof}
\mathbb{E} \left[ \|u(t)-U(t)\|^2_H\right]
&= \mathbb{E} \left[\pmb{1}_{(\tau_{R, \beta, \delta, \mu =+\infty ) }} \|u(t)-U(t)\|^2_H\right]+ \mathbb{E} \left[\pmb{1}_{(\tau_{R, \beta, \delta, \mu <+\infty)}} \|u(t)-U(t)\|^2_H\right]
\notag \\
& \le e^{\beta+ R - \frac{\delta \mu}{(\delta +1)^2} t} \|u_0-U_0\|^2_H+ \left( \mathbb{P}(\tau_{R, \beta, \delta, \mu}<+\infty)\right)^{\frac 12} \left(\mathbb{E}[\|u(t)-U(t)\|^4_H \right)^{\frac 12}
\notag\\
& \le
 C\left(1+\|u_0\|^2_H+ \|U_0\|^2_H \right)\left(\left( \mathbb{P}(\tau_{R, \beta, \delta, \mu}<+\infty)\right)^{\frac 12}+ e^{\beta+ R - \frac{\delta\mu}{(\delta +1)^2} t}\right),
\end{align}
where $C$ is a positive constant depending on the structural parameters of the equation (see Lemmata \ref{lem_mul_q}--\ref{lem_mul_q_U} for the explicit dependence according to which case (B), (S) or (L) in Assumption \ref{assumption-stochastic2} is considered).

We now make a suitable choice of the parameters $R, \beta, \delta$ and $\mu$ and use the previous bounds on $\tau_{R, \beta, \delta \mu}$. From the lower bounds on $\mu$ \eqref{cond_mu_bounded}, \eqref{cond_mu_sublinear} and \eqref{cond_mu_linear} we infer the existence of $\delta>0$ such that the lower bounds \eqref{cond_mu_i}, \eqref{cond_mu_ii} and \eqref{cond_mu_iii}, respectively, in Proposition \ref{tau_exp} hold true. 
By also choosing $\beta$ as in Proposition \ref{tau_exp}, for any $R>0$, we obtain the bounds
\begin{equation*}
{P}(\tau_{R, \beta, \delta, \mu} < +\infty)^{\frac 12} \le
\begin{cases}
e^{-CR} & \text{under Assumption \ref{assumption-stochastic2} (B)},
\\
\frac{C\left(1+\|u_0\|^{2(p+1)}_H\right)}{R^{\frac p2}} & \text{ for any $p \in (0, +\infty)$,  under  Assumption \ref{assumption-stochastic2} (S)},
\\
\frac{C\left(1+\|u_0\|^{2(p+1)}_H\right)}{R^{\frac p2}} & \text{ for any $p\in \left(0, \frac{\nu \lambda_1}{4\widetilde{K}_L}-\frac 12 \right)$, \ under  Assumption \ref{assumption-stochastic2} (L)}, 
\end{cases}
\end{equation*}
with 
\begin{equation*}
    C= 
    \begin{cases}
C(\lambda_1, \nu, K_B) & \text{under Assumption \ref{assumption-stochastic2} (B)},
\\
C\left(\lambda_1, p, \nu, K_S, \widetilde{K}_S,\gamma, \|f\|_{V^*}\right) & \text{ under  Assumption \ref{assumption-stochastic2} (S)},
\\
C\left(\lambda_1, p, \nu, K_L, \widetilde{K}_L, \|f\|_{V^*}\right) & \text{  under  Assumption \ref{assumption-stochastic2} (L)},
\end{cases}
\end{equation*}
where we emphasize that the constant $C$ does not depend on $u_0, U_0, R, \beta$ and $t$. Coming back to estimate \eqref{sti_diff_proof}, 
if we select $R= \frac{\delta\mu}{2(1+\delta)^2}t$, for each $t > 0$, we conclude the proof. 
\end{proof}

\subsection{Some partial results for pathwise convergence}
Since $\displaystyle\lim_{t\to+\infty} \EE \|u(t_n)-U(t_n)\|^2_H=0$,  there exists  a subsequence converging $\PX$-a.s. 

We now show that given  any subsequence with $t_n\to+\infty$ as $n\to+\infty$ we have 
$\|u(t_n)-U(t_n)\|^2_H\to 0$, $\PX$-a.s.
Notice that this is not enough to prove that there is pathwise convergence $\|u(t)-U(t)\|^2_H\to 0$ as $t\to+
\infty$.

\begin{coro}
\label{pathwise_mult}
Under the same assumption of Theorem \ref{convergence_mult_case}, for any diverging increasing sequence of times $(t_n)_n$ we have 
 \[
\mathbb{P}\left( \lim_{ n \rightarrow + \infty}\|u(t_n)-U(t_n)\|^2_H =0\right) = 1.
 \]
\end{coro}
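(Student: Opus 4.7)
The plan is to upgrade the convergence in expectation from Theorem \ref{convergence_mult_case} to almost sure convergence along the sequence $(t_n)_n$ by combining Chebyshev's inequality with the first Borel--Cantelli lemma.

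Fix a diverging increasing sequence $(t_n)_n$ and $\epsilon>0$. The first step is Chebyshev's inequality,
\[
\PX\bigl(\|u(t_n)-U(t_n)\|_H^2>\epsilon\bigr)\le \frac{1}{\epsilon}\,\EE\|u(t_n)-U(t_n)\|_H^2,
\]
which converts the $L^1$ bound into a tail estimate. Next I would invoke Theorem \ref{convergence_mult_case} to control the right-hand side: an exponential bound of the form $C_1 e^{-C_2 t_n}$ in case (i), and a polynomial bound $C_p\,t_n^{-p}$ in cases (ii) and (iii). Here $p$ may be taken arbitrarily large in (ii), while in (iii) the admissible range is $\bigl(0,\tfrac{\nu\lambda_1}{4\widetilde{K}_L}-\tfrac12\bigr)$; the standing hypothesis $\nu>6\widetilde{K}_L/\lambda_1$ of Theorem \ref{convergence_mult_case}(iii) is precisely what guarantees that this range contains values of $p$ strictly greater than $1$.

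With the rates in hand, I would select the decay parameter so that $\sum_n \PX\bigl(\|u(t_n)-U(t_n)\|_H^2>\epsilon\bigr)<\infty$: in (i) the exponential decay gives summability as long as $t_n$ grows at least logarithmically, whereas in (ii)--(iii) one simply picks a $p>1$ inside the admissible range. An application of the first Borel--Cantelli lemma then produces a full-probability event $\Om_\epsilon$ on which $\|u(t_n)-U(t_n)\|_H^2\le \epsilon$ for all but finitely many $n$. Finally I would specialize to $\epsilon=1/m$ with $m\in\N$ and intersect the countable family of full-probability events $\Om_{1/m}$, obtaining an event of probability one on which $\limsup_{n\to+\infty}\|u(t_n)-U(t_n)\|_H^2=0$, which is exactly the stated conclusion.

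The delicate point is the summability step: the rates from Theorem \ref{convergence_mult_case} must be compatible with the rate of divergence of $(t_n)_n$. The arbitrariness of $p$ in case (ii) removes any obstruction there, while in (iii) the compatibility is exactly enforced by the structural condition $\nu>6\widetilde{K}_L/\lambda_1$, which makes some $p>1$ admissible and thereby enables Borel--Cantelli for the natural choices of sequences $(t_n)_n$.
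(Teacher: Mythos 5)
There is a genuine gap: your argument hinges on summability of the tail bounds along $(t_n)_n$, but the corollary is asserted for \emph{any} diverging increasing sequence, and the rates provided by Theorem \ref{convergence_mult_case} are not summable along slowly diverging sequences. Concretely, in case (ii) the bound is $C_p t_n^{-p}$ with $p$ arbitrary but \emph{fixed} before summing, so for $t_n=\log n$ one has $\sum_n (\log n)^{-p}=+\infty$ for every $p$; in case (iii) the exponent is capped at $p^*:=\frac{\nu\lambda_1}{4\widetilde K_L}-\frac12$, so the series already diverges for polynomially growing sequences such as $t_n=n^{1/(2p^*)}$, and the condition $\nu>6\widetilde K_L/\lambda_1$ (which only guarantees $p^*>1$) does not repair this; even in case (i) the exponential bound $C_1e^{-C_2t_n}$ fails to be summable for sequences like $t_n=\log\log n$ or $t_n=\frac{1}{2C_2}\log n$. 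You acknowledge this by restricting to sequences that "grow at least logarithmically" or to "natural choices", but that is precisely the point where the proposal proves a strictly weaker statement than the one claimed: Borel--Cantelli simply cannot be fed with the in-expectation rates alone for a general $(t_n)_n$, because the polynomial decay of $\mathbb{E}\|u(t_n)-U(t_n)\|_H^2$ is an intrinsic feature of the multiplicative-noise cases (ii)--(iii), not an artifact you can tune away.

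The paper circumvents this by not summing the polynomial rates at all. It works with the events $B_n=\{\|u(t_n)-U(t_n)\|_H^2>1/t_n^2\}$ and splits $B=\limsup_n B_n$ along the stopping time $\tau_{R,\beta,\delta,\mu}$ of \eqref{tau_R_beta}. On $\{\tau_{R,\beta,\delta,\mu}=+\infty\}$, Corollary \ref{cor_tau_R_beta} gives a bound $e^{R+\beta}\|u_0-U_0\|_H^2\,e^{-\frac{\delta\mu}{(1+\delta)^2}t_n}$ which is \emph{exponential in $t_n$ in all three noise regimes}; Markov's inequality and Borel--Cantelli are applied only there. The complementary contribution is controlled, uniformly in $n$, by $\mathbb{P}(\tau_{R,\beta,\delta,\mu}<+\infty)$, and Proposition \ref{tau_exp} (where the exponential versus polynomial dichotomy actually lives) is used just once, to make this single probability smaller than an arbitrary $\varepsilon$ by choosing $R$ large; letting $\varepsilon\to0$ then yields probability one. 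So the missing idea in your proposal is this decomposition: isolate a "good" event on which the Foias--Prodi-type estimate decays exponentially regardless of the growth of $G$, and treat the "bad" event by a smallness estimate that never enters a series. If you want to salvage your route, you would have to either restrict the class of admissible sequences (changing the statement) or reproduce exactly this stopping-time argument.
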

\begin{proof}
For any $n\in\mathbb{N}$ we introduce the events 
\[
B_n:= \left\{\|u(t_n)-U(t_n)\|^2_H > \frac{1}{t_n^2}\right\}
\]
and we set
\[
B:=\bigcap_{m=1}^{+\infty}\bigcup_{n=m}^{+\infty}B_n.
\]
We consider the stopping time $\tau_{R, \beta, \delta, \mu}$ introduced in \eqref{tau_R_beta} and write 
\[
\mathbb{P}(B)=\mathbb{P}\left(B \cap (\tau_{R,\beta, \delta, \mu}=+ \infty)\right)+\mathbb{P}\left(B \cap (\tau_{R, \beta, \delta, \mu}<+ \infty)\right).
\]
We now observe that $\mathbb{P}\left(B \cap (\tau_{R, \beta, \delta, \mu}=+ \infty)\right)=0$ for any $R, \beta, \delta>0$ and $0<\mu \le \frac{  \nu }{c_0 h^2}$. In fact, thanks to the Markov inequality and Corollary \ref{cor_tau_R_beta}, for every $n \in \mathbb{N}$ it holds
\begin{align*}
\mathbb{P}\left(B_n \cap (\tau_{R, \beta, \delta, \mu}=+ \infty)\right)
&\le t_n^2 \mathbb{E}\left[\pmb{1}_{\left(\tau_{R,\beta, \delta, \mu}=+ \infty\right)} \|u(t_n)-U(t_n)\|_H^2\right]
\\
&\le e^{R+ \beta} \|u_0-U_0\|^2_H t_n^2 e^{-\frac{\delta\mu}{(\delta+1)^2}t_n}.
\end{align*}
Thus, 
\[
\sum_{n\in\mathbb{N}} \mathbb{P} \left(B_n \cap  (\tau_{R, \beta,\delta,\mu}=+\infty) \right) <+ \infty.
\]
Hence, by the Borel-Cantelli Lemma it follows $\mathbb{P} \left(B \cap  (\tau_{R, \beta, \delta, \mu}=+\infty ) \right)=0$. We infer 
\[
\mathbb{P}(B) = \mathbb{P}\left(B \cap  (\tau_{R, \beta,\delta, \mu}< + \infty) \right)\le \mathbb{P} \left(\tau_{R,\beta, \delta, \mu} < + \infty\right).
\]
We now make a suitable choice of the parameters $R, \beta, \delta$ and $\mu$ and use the bounds on $\tau_{R, \beta, \delta,\mu}$. From the lower bounds on $\mu$ \eqref{cond_mu_bounded}, \eqref{cond_mu_sublinear} and \eqref{cond_mu_linear} we infer the existence of $\delta>0$ such that the lower bounds \eqref{cond_mu_i}, \eqref{cond_mu_ii} and \eqref{cond_mu_iii}, respectively, in Proposition \ref{tau_exp} hold true.
By also choosing $\beta$ as in Proposition \ref{tau_exp}, for any $R>0$, we infer
\begin{equation*}
 \mathbb{P}\left(\tau_{R, \beta, \delta, \mu}<+ \infty\right) \le
\begin{cases}
e^{-CR} & \text{under Assumption \ref{assumption-stochastic2} (B)},
\\
\frac{C}{R^{p}} & \text{ for any $p \in (0, +\infty)$,  under  Assumption \ref{assumption-stochastic2} (S)},
\\
\frac{C}{R^{p}} & \text{ for any $p\in \left(0, \frac{\nu \lambda_1}{4\widetilde{K}_L}-\frac 12 \right)$, \ under  Assumption \ref{assumption-stochastic2} (L)}, 
\end{cases},
\end{equation*}
 where the constant $C$ does not depend on $R$ (and depends on the structural parameters of the equation; see the proof of Theorem \ref{convergence_mult_case} for more details).
Hence, for any $\varepsilon>0$ we can fix $\overline R=\overline R(\varepsilon)>0$ such that $\mathbb{P}(B)<\varepsilon$.
With such choice of $\overline R$, 
 from the continuity from below we can find $m^*>0$ sufficiently large such that 
\[
\mathbb{P}\left( \bigcap_{n=m^*}^{+\infty}B_n^c\right)\ge1-\varepsilon.
\]
Now we observe that 
\[
\left\{ \lim_{ n \rightarrow +\infty} \|u(t_n)-U(t_n)\|_H^2 =0 \right\} \supseteq \bigcap_{n=m^*}^{+\infty}B_n^c,
\]
hence,
\[
\mathbb{P} \left( \lim_{n \rightarrow+ \infty} \|u(t_n)-U(t_n)\|_H^2  =0 \right) \ge \mathbb{P} \left(  \bigcap_{n=m^*}^{+\infty}B_n^c\right)\ge 1-\varepsilon.
\]
By the arbitrariness of $\varepsilon$ we conclude the proof.
\end{proof}

\section{Data assimilation in the stochastic framework: the additive noise case}
\label{sec:add}

Let us now turn to the study of the 2D Navier-Stokes equations driven by an additive noise.  
This is the particular case when the covariance of the noise does not depend on the velocity $u$, i.e.
\begin{equation}
\label{2D-NS-additive-abs}
\begin{cases}
{\rm d}u(t) + \left[\nu Au(t)+B(u(t),u(t))\right]\,{\rm d}t=f \, {\rm d}t+G\,{\rm d}W(t),\qquad\qquad t>0
\\
u(0)=u_0\in H
\end{cases}
\end{equation}
As before, we assume $\nu>0$ and  $ f \in V^*$ independent of time, and we do not repeat these assumptions in the next statements.

 \begin{theorem}
 \label{NS-wp}
Let $G \in L_{HS}(U;H)$. 
Then,  there exists a unique strong solution $u$ to equation \eqref{2D-NS-additive-abs} with $\PP$-a.s. paths in 
\begin{equation*}
\mathcal{C}([0,+\infty), H) \cap L_{loc}^2(0,+\infty;V),
\end{equation*}
that $\mathbb{P}$-a.s. satisfies
  \begin{align*}			
  \left( u(t), \phi\right)_H +  \int_0^t \left( A^{1/2} u(s), A^{1/2} \phi\right)_H\, {\rm d}s + \int_0^t \big\langle B(u(s),\phi),  u(s) \big\rangle \,{\rm d}s
=  \left( u_0, \phi\right)_H + \langle f, \phi\rangle t +\big\langle GW(t), \phi \rangle,
  \end{align*}
for every $t \ge 0$ and every $\phi \in V$. 
\end{theorem}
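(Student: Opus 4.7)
The plan is to deduce the statement directly from Theorem \ref{NS_mult-wp} by interpreting the additive noise as a degenerate multiplicative one. Given $G \in L_{HS}(U;H)$, define the constant map $\widetilde G \colon H \to L_{HS}(U;H)$ by $\widetilde G(u) := G$. Then $\widetilde G$ trivially satisfies Assumption \ref{assumption-stochastic1} with $L = 0$, and Assumption \ref{assumption-stochastic2}(B) with $K_B := \|G\|^2_{L_{HS}(U,H)}$. Theorem \ref{NS_mult-wp} therefore supplies a unique strong solution with the stated regularity and variational identity; since $\widetilde G \equiv G$, the stochastic integral collapses to $GW(t)$, matching the variational formulation in the statement.

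Alternatively one may give a self-contained proof via the classical change of variable specific to additive noise. Introduce the stochastic convolution
$$z(t) := \int_0^t e^{-\nu(t-s)A}G\, {\rm d}W(s),$$
which, under $G \in L_{HS}(U;H)$, admits $\mathbb{P}$-a.s. paths in $\mathcal{C}([0,T];H) \cap L^2(0,T;V)$ for every $T > 0$ by standard estimates on stochastic convolutions driven by trace-class-regularized cylindrical Wiener processes (using the smoothing of the analytic Stokes semigroup $e^{-tA}$ to gain the $V$-regularity). The process $v := u - z$ then satisfies pathwise the random deterministic equation
$$\partial_t v + \nu A v + B(v+z, v+z) = f, \qquad v(0) = u_0,$$
which, for each fixed $\omega$, is a two-dimensional Navier--Stokes equation perturbed by a time-dependent lower-order term whose $L^2(0,T;V)$-norm is finite. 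Galerkin approximation together with the usual compactness argument yields existence, while the antisymmetry identities \eqref{B} combined with Gr\"onwall's lemma give uniqueness in $\mathcal{C}([0,T];H) \cap L^2(0,T;V)$. Setting $u := v + z$ produces the required adapted strong solution on every compact interval, and hence on $[0,+\infty)$ by patching.

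The only delicate step in the self-contained route is the treatment of the perturbed nonlinearity $B(v+z,v+z) = B(v,v) + B(v,z) + B(z,v) + B(z,z)$ in the energy identity for $v$. The cross terms involving $z$ are estimated pathwise via the two-dimensional Ladyzhenskaya inequality $\|w\|_{[L^4(D)]^2}^2 \le C\|w\|_H \|w\|_V$, together with Young's inequality to absorb a term of size $\tfrac{\nu}{2}\|v\|^2_V$ on the left-hand side; since $\|z\|_V \in L^2(0,T)$ almost surely, the remaining coefficients are integrable in time and Gr\"onwall closes the bound pathwise. All subsequent properties (adaptedness, continuity in $H$, square-integrability in $V$) follow routinely, so the main genuine obstacle reduces to establishing sufficient pathwise regularity of $z$, which is the classical content invoked above.
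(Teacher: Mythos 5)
Your first reduction is precisely the paper's own approach: the paper explicitly treats additive noise as the degenerate case of the bounded multiplicative setting with $L=0$ and $K_B=\|G\|^2_{L_{HS}(U,H)}$, so Theorem \ref{NS-wp} is obtained directly from Theorem \ref{NS_mult-wp} (i.e.\ from the well-posedness result cited from \cite{FZ23}), exactly as you argue. Your alternative stochastic-convolution argument is a correct and standard self-contained route, but it is not needed; the only cosmetic point is that the sign of the trilinear term differs between the two variational formulations in the paper (a typographical inconsistency, given the antisymmetry \eqref{B}), which does not affect your reduction.
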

Mean square estimates for the $H$ and $V$-norm of the solution $u$ are given in Appendix \ref{sec:appB}.

The data assimilation equation in the additive case is now
\begin{equation}
\begin{cases}
\label{data-assi-NS-equation-abs}
{\rm d} U(t)+ \left( \nu A U(t) + B(U(t),U(t)) \right) {\rm d}t  =  G{\rm d} W(t) + f {\rm d}t - \mu \Pi R_{h}(U(t)-u(t))   {\rm d}t    ,
\\
U(0)=U_0 \in H
\end{cases}
\end{equation}
where $u$ is the solution of the Navier-Stokes equation \eqref{2D-NS-additive-abs} and $R_h$, defined in Section \ref{sect:interpolant-operators}, fulfills \eqref{iden-data-approx}.
Notice that equation \eqref{data-assi-NS-equation-abs} admits a unique strong solution with the same regularity as the solution to equation \eqref{2D-NS-additive-abs}. 

In the previous section,  we obtained   convergence in expectation
when the multiplicative coefficient is bounded.
In contrast, in the presence of additive noise, the pathwise behavior can be investigated in a more straightforward manner.
We first establish the convergence result in expectation,
subsequently, we turn to the pathwise analysis.

\subsection{Convergence in expectation}

The convergence in the mean square sense is obtained exactly as in the multiplicative noise setting. The next result can indeed be seen as a particular case of  part i) of Theorem \ref{NS-wp}, considering $L=0$ and $K_B= \|G(u)\|^2_{L_{HS}(U,H)}$.
\begin{theorem}
\label{exp-conv-prop}
Let $G\in L_{HS}(U;H)$ and $R_h$ satisfy assumption \eqref{iden-data-approx}.
Let $u$ be the solution to the Navier-Stokes  equation \eqref{2D-NS-additive-abs} and  $U$ the solution to the  data assimilation equation \eqref{data-assi-NS-equation-abs}. If $\mu$ and $h$ satisfy
\begin{equation}
\label{cond_mu_add}
\frac{2}{\nu^2}\left(\|G\|^2_{L_{HS}(U,H)}+\frac{\|f\|^2_{V^*}}{\nu}\right) < \mu \le \frac{  \nu }{c_0 h^2 }, 
\end{equation}
then
$\mathbb{E}\left[\|u(t)-U(t)\|_H^2\right] \rightarrow 0$ exponentially fast as $t \rightarrow + \infty$.
\end{theorem}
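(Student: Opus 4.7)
The proof will proceed by observing that the additive setting is a degenerate case of the bounded-multiplicative framework, so Theorem \ref{convergence_mult_case}(i) applies directly once the parameters are correctly identified. I first note that, when $G$ is a constant (state-independent) operator, Assumption \ref{assumption-stochastic1} is trivially satisfied with Lipschitz constant $L=0$, and Assumption \ref{assumption-stochastic2}(B) holds with the choice $K_B = \|G\|^2_{L_{HS}(U,H)}$. The data assimilation equation \eqref{data-assi-NS-equation-abs} is then literally an instance of \eqref{data-assi-NS-equation-abs_mult} with a constant diffusion coefficient, so there is no ambiguity in identifying the two solutions $U$.

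With these identifications, the constant $C_B$ from \eqref{C_B} becomes $C_B = 2\bigl(\|G\|^2_{L_{HS}(U,H)} + \|f\|^2_{V^*}/\nu\bigr)$, so the lower bound in \eqref{cond_mu_bounded}, namely $L + C_B/\nu^2 < \mu$, collapses to exactly the condition $\frac{2}{\nu^2}\bigl(\|G\|^2_{L_{HS}(U,H)} + \|f\|^2_{V^*}/\nu\bigr) < \mu$ required in \eqref{cond_mu_add}. The upper bound $\mu \le \nu/(c_0 h^2)$ is the same in both statements. Consequently, all hypotheses of Theorem \ref{convergence_mult_case}(i) are met, and its conclusion yields the claimed exponential decay of $\mathbb{E}\|u(t)-U(t)\|^2_H$.

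As an alternative, I could repeat the argument of Lemma \ref{data-estimate-2} and Proposition \ref{tau_exp}(i) directly in the additive case. The derivation of the key inequality \eqref{est1mul} simplifies because, with $G(u)\equiv G$, the stochastic increments cancel exactly in the equation for $r = U - u$, so the local martingale $M$ there is identically zero and the resulting bound becomes pathwise. One would then take $L=0$ and $K_B = \|G\|^2_{L_{HS}(U,H)}$ in the definition of the stopping time $\tau_{R,\beta,\delta,\mu}$ and rely on the energy estimates of Appendix \ref{sec:appB} in place of those of Appendix \ref{sec:appA}. I do not foresee any genuine obstacle: the only item demanding care is the arithmetic matching of the constants, and the pathwise cancellation of the noise actually makes the argument strictly easier than in the multiplicative setting.
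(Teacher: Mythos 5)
Your proposal is correct and follows essentially the same route as the paper: the paper itself obtains Theorem \ref{exp-conv-prop} as a particular case of the bounded multiplicative result (Theorem \ref{convergence_mult_case}(i)) with $L=0$ and $K_B=\|G\|^2_{L_{HS}(U,H)}$, and your matching of the constants $C_B$ and of condition \eqref{cond_mu_bounded} with \eqref{cond_mu_add} is exactly the required verification.
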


\subsection{Pathwise convergence}
Let us now focus on the pathwise analysis, meaning that convergence results hold $\mathbb P$-almost surely.
Our main result reads as follows.
\begin{theorem}
\label{pathwise_data_ass}
Let $G\in L_{HS}(U;H)$ and let $R_h$ satisfy assumption \eqref{iden-data-approx}.
Let $u$ be the solution to the Navier-Stokes  equation \eqref{2D-NS-additive-abs} and  $U$ the solution to the  data assimilation equation \eqref{data-assi-NS-equation-abs}. If $\mu$ and $h$ satisfy
\begin{equation}
\label{cond_mu}
\frac{2}{\nu^2}\left(\|G\|^2_{L_{HS}(U,H)}+\frac{\|f\|^2_{V^*}}{\nu}\right) < \mu \le \frac{  \nu }{c_0 h^2 }, 
\end{equation}
then
$\|u(t)-U(t)\|_H \rightarrow 0$ exponentially fast as $t \rightarrow + \infty$, $\mathbb{P}$-a.s.
\end{theorem}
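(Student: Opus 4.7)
The plan is to exploit the fact that, when the noise is additive, the stochastic integrals in \eqref{2D-NS-additive-abs} and \eqref{data-assi-NS-equation-abs} cancel when one subtracts the two equations. Setting $r(t) := U(t) - u(t)$, I therefore obtain, $\mathbb{P}$-a.s., a \emph{pathwise} (random but non-stochastic) evolution equation
\begin{equation*}
\frac{{\rm d}}{{\rm d}t} r(t) + \nu A r(t) + B(U(t),U(t)) - B(u(t),u(t)) = -\mu \Pi R_h(r(t)), \qquad r(0) = U_0 - u_0.
\end{equation*}
Repeating the energy computation carried out in the proof of Lemma \ref{data-estimate-2}, now specialised by setting $L = 0$ and by dropping the $G(U) - G(u)$ and stochastic-integral contributions (both of which vanish identically in the additive setting), together with the constraint $\mu \le \nu/(c_0 h^2)$ coming from \eqref{iden-data-approx}, I expect to arrive at the pointwise differential inequality
\begin{equation*}
\frac{{\rm d}}{{\rm d}t} \|r(t)\|_H^2 + \Bigl( \mu - \frac{1}{\nu} \|u(t)\|_V^2 \Bigr) \|r(t)\|_H^2 \le 0, \qquad \mathbb{P}\text{-a.s.},
\end{equation*}
and hence, by a deterministic Gronwall argument applied $\omega$ by $\omega$,
\begin{equation*}
\|r(t)\|_H^2 \le \|r_0\|_H^2 \, \exp\!\Bigl( -\mu t + \frac{1}{\nu} \int_0^t \|u(s)\|_V^2 \,{\rm d}s \Bigr), \qquad \mathbb{P}\text{-a.s.}
\end{equation*}

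The second step is to control the time average $\frac{1}{t}\int_0^t \|u(s)\|_V^2 \,{\rm d}s$ pathwise as $t \to \infty$. Applying It\^o's formula to $\|u\|_H^2$ along \eqref{2D-NS-additive-abs}, absorbing $2 \langle f, u \rangle$ with Young's inequality at level $\nu$, and integrating, I get
\begin{equation*}
\nu \int_0^t \|u(s)\|_V^2 \,{\rm d}s \le \|u_0\|_H^2 + \left( \|G\|_{L_{HS}(U,H)}^2 + \frac{\|f\|_{V^*}^2}{\nu} \right) t + 2 M(t),
\end{equation*}
where $M(t) := \int_0^t \langle u(s), G \,{\rm d}W(s) \rangle$ is a continuous local martingale. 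The crucial ingredient is then a strong law of large numbers for $M$: since $\langle M\rangle_t \le \|G\|_{L_{HS}(U,H)}^2 \int_0^t \|u(s)\|_H^2 \,{\rm d}s$ and $\sup_{t \ge 0} \mathbb{E}\|u(t)\|_H^2 < \infty$ by the a priori estimates of Appendix \ref{sec:appB}, Fubini yields $\int_0^\infty (1+s)^{-2} \,{\rm d}\langle M\rangle_s < \infty$ a.s., and the classical strong law for continuous local martingales gives $M(t)/t \to 0$ $\mathbb{P}$-a.s. Plugging this into the energy bound produces
\begin{equation*}
\limsup_{t \to \infty} \frac{1}{\nu t} \int_0^t \|u(s)\|_V^2 \,{\rm d}s \le \frac{1}{\nu^2} \left( \|G\|_{L_{HS}(U,H)}^2 + \frac{\|f\|_{V^*}^2}{\nu} \right) < \frac{\mu}{2}, \qquad \mathbb{P}\text{-a.s.},
\end{equation*}
where the strict inequality is precisely the hypothesis \eqref{cond_mu}.

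Putting the two steps together, the exponent in the Gronwall bound is almost surely dominated by $-(\mu/2)t + o(t)$, which delivers the claimed exponential pathwise decay of $\|r(t)\|_H$. The hard part is exactly the passage from moment estimates on $u$ to an almost sure control of the time average of $\|u\|_V^2$: it is not a pathwise repetition of anything in Section \ref{sec:mult}, and it rests entirely on the martingale SLLN. It is also here that the factor $2$ appearing in condition \eqref{cond_mu}, as opposed to the factor $1$ that the purely deterministic energy identity would naively suggest, is spent, to provide the cushion needed to absorb the fluctuation term $M(t)/t$ in the limsup.
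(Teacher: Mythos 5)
Your proof is correct, and its first step (the pathwise Gronwall estimate for $r=U-u$, possible because the additive noise cancels in the difference equation) coincides exactly with the paper's Lemma \ref{data-estimate-1}. Where you genuinely diverge is in the almost-sure control of $\frac{1}{\nu t}\int_0^t\|u(s)\|_V^2\,{\rm d}s$: the paper obtains \eqref{stima-certa} from the exponential martingale inequality (Lemma \ref{lem_add_2}, a special case of Lemma \ref{lem_mul_2}), which costs half of the dissipation to dominate the quadratic variation and therefore produces the constant $\frac{2}{\nu^2}\bigl(\|G\|^2_{L_{HS}(U,H)}+\frac{\|f\|^2_{V^*}}{\nu}\bigr)$, whereas you use the strong law of large numbers for continuous local martingales (via $\sup_{t\ge0}\mathbb{E}\|u(t)\|_H^2<\infty$, Fubini and Kronecker's lemma) to get $M(t)/t\to0$ a.s.\ outright, yielding the sharper bound with constant $\frac{1}{\nu^2}$. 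Each route buys something: yours is more elementary and shows the theorem would actually hold under the weaker threshold $\mu>\frac{1}{\nu^2}\bigl(\|G\|^2_{L_{HS}(U,H)}+\frac{\|f\|^2_{V^*}}{\nu}\bigr)$, while the paper's exponential-tail estimate is quantitative in $R$ and is the same tool reused throughout the multiplicative-noise analysis (and the paper also offers a third, ergodic-theoretic proof in Appendix \ref{sec:appC} under stronger assumptions on $G$). One small inaccuracy in your commentary: the factor $2$ in \eqref{cond_mu} is not ``spent'' to absorb $M(t)/t$ in \emph{your} argument, since the SLLN makes that term vanish identically in the limit; it is the paper's exponential-martingale route that pays this factor, and in your proof the hypothesis simply provides unused slack.
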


The proof of the above result is based on the following preliminary lemma.

\begin{lemma}
\label{data-estimate-1}
Let  $G\in L_{HS}(U;H)$ and let $R_h$ satisfy assumption \eqref{iden-data-approx}.
Let $u$ be the solution to the Navier-Stokes  equation \eqref{2D-NS-additive-abs} and  $U$ the solution to the  data assimilation equation \eqref{data-assi-NS-equation-abs}. If $\mu$ and $h$ satisfy
\begin{equation}
\label{condi-1-mu}
0<\mu \le \frac{  \nu }{c_0 h^2 } 
\end{equation}
 where $c_0$ is the constant that appears in estimate \eqref{iden-data-approx},
$\PP$-a.s.  we have
\begin{align}
\label{error-u-U}
\left\| u(t)-U(t) \right\|_{H}^2 &\leq \left\|u_0 - U_0 \right\|_{H}^2 \quad  \exp \left(- \mu t + \frac{1}{\nu} \displaystyle \int_{0}^t  \left\| u(s) \right\|_{V}^2 ds   \right), \qquad \text{ for any }t>0.
\end{align} 
\end{lemma}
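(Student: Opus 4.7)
The plan is to exploit the critical simplification that occurs in the additive-noise setting: since the diffusion coefficient $G$ does not depend on the state, the stochastic differentials in \eqref{2D-NS-additive-abs} and \eqref{data-assi-NS-equation-abs} are identical, and subtracting the two equations eliminates the stochastic integral entirely. Consequently, the difference $r(t):=U(t)-u(t)$ satisfies, $\mathbb{P}$-a.s., the pathwise random PDE
\[
\partial_t r + \nu A r + B(U,U) - B(u,u) = -\mu \Pi R_h(r), \qquad r(0)=U_0-u_0,
\]
with \emph{no} noise term. Hence the ordinary deterministic chain rule, applied $\omega$ by $\omega$ to $\|r(t)\|_H^2$, is all that is needed, and no stopping-time argument or Ito correction enters.

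First, I would apply the chain rule to $\|r(t)\|_H^2$, use the decomposition $B(U,U)-B(u,u)=B(r,u)+B(U,r)$ and invoke the antisymmetry relation \eqref{B} to kill $\langle B(U,r),r\rangle$. The remaining nonlinear term is estimated by the Ladyzhenskaya/Gagliardo--Nirenberg inequality exactly as in the proof of Lemma \ref{data-estimate-2}, followed by Young's inequality, to get
\[
2|\langle B(r,u),r\rangle| \leq \nu\|r\|_V^2 + \tfrac{1}{\nu}\|u\|_V^2\|r\|_H^2.
\]
The nudging term is handled identically: using the approximation property \eqref{iden-data-approx} together with the contraction property of $\Pi$ on $[L^2(D)]^2$ gives $\|\Pi R_h(r)-r\|_H^2 \leq c_0 h^2 \|r\|_V^2$, so that writing $-2\mu\langle \Pi R_h(r),r\rangle = -2\mu\|r\|_H^2 - 2\mu\langle \Pi R_h(r)-r,r\rangle$ and applying Cauchy--Schwarz and Young yields an upper bound of the form $-\mu\|r\|_H^2 + \mu c_0 h^2\|r\|_V^2$.

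Combining these bounds and invoking the hypothesis $\mu c_0 h^2 \leq \nu$ from \eqref{condi-1-mu} to absorb the remaining gradient term into the viscous dissipation, the $\|r\|_V^2$ contributions cancel and one is left with the clean pathwise differential inequality
\[
\frac{d}{dt}\|r(t)\|_H^2 \leq \left( \tfrac{1}{\nu}\|u(t)\|_V^2 - \mu\right)\|r(t)\|_H^2, \qquad \mathbb{P}\text{-a.s.}
\]
A $\PP$-a.s.\ application of Gr\"onwall's lemma then produces the claimed bound \eqref{error-u-U} directly, with no need to take expectations or truncate. The only genuine (and rather mild) point to verify is the applicability of the chain rule to $t\mapsto \|r(t)\|_H^2$, which is justified by the regularity $r\in\mathcal{C}([0,+\infty);H)\cap L^2_{\mathrm{loc}}(0,+\infty;V)$ inherited from Theorem \ref{NS-wp} and the analogous well-posedness for \eqref{data-assi-NS-equation-abs}; this is a standard Lions-type argument.
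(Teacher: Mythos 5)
Your proposal is correct and follows essentially the same route as the paper: subtract the two equations so the additive noise cancels, test with $r=U-u$, use the antisymmetry \eqref{B} together with the Ladyzhenskaya/Young estimate on $\langle B(r,u),r\rangle$ and the interpolation property \eqref{iden-data-approx} for the nudging term, absorb the $\|r\|_V^2$ contributions via $\mu c_0 h^2\le\nu$, and conclude pathwise by Gr\"onwall. The only cosmetic difference is your explicit remark on justifying the chain rule via the Lions-type regularity argument, which the paper leaves implicit.
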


\begin{proof}
The proof is very similar to that of Lemma \ref{data-estimate-2}. But now the difference $r=U-u$ 
fulfills an equation without noise and  with random coefficients.

Given $u$ and $U$ satisfying the equations \eqref{2D-NS-additive-abs} and \eqref{data-assi-NS-equation-abs}, respectively,  we obtain the evolution of the difference $r(t)=U(t)-u(t)$
\begin{equation*}
\begin{cases}
\frac{{\rm d}}{dt} r(t) + \nu A r(t) +  B(U(t),U(t)) - B(u(t),u(t))   = - \mu \Pi R_h(r(t))  \\ 
r(0)=U_0 - u_0\in H
\end{cases}
\end{equation*}
There is no longer  the noise term; 
the pathwise estimate is easily obtained. Hence, from now on we work pathwise, i.e. the path is fixed.
Taking the scalar product with $r(t)$ we obtain the equality
\begin{align*}
\frac{{\rm d}}{{\rm d}t} \left\| r(t) \right\|_{H}^2 + 2 \nu \left\|  r(t) \right\|_{V}^2  &= -2 \langle r(t),B(r(t),u(t)) \rangle   - 2 \mu \langle R_h(r(t)),r(t) \rangle,
\end{align*}
which holds for every $t \ge 0$. Proceeding similarly to the proof of Lemma \ref{data-estimate-2} we infer 
\begin{align*}
\frac{{\rm d}}{{\rm d}t}  \left\| r(t) \right\|_{H}^2 + \nu \left\| r(t) \right\|_{V}^2  + \mu  \left\| r(t) \right\|_{H}^2 
& \leq \mu c_0 h^2 \left\| r(t) \right\|_{V}^2  + \frac{1}{\nu} \left\| r(t) \right\|^2_{H} \left\|  u(t) \right\|_{V}^2 
\end{align*}
which yields
\begin{align*}
 \frac{{\rm d}}{{\rm d}t} \left\| r(t) \right\|_{H}^2 + \left(\nu - \mu c_0 h^2 \right) \left\|  r(t) \right\|_{V}^2  + 
 \left( \mu  - \frac{1}{\nu} \left\|  u(t) \right\|_{V}^2 \right) \left\|r(t) \right\|_{H}^2  \leq 0.
\end{align*}
 By \eqref{condi-1-mu} we have  
$\nu - \mu c_0 h^2 \ge 0$; hence 
\[
\frac{{\rm d}}{{\rm d}t} \left\| r(t) \right\|_{H}^2  +
 \left( \mu  - \frac{1}{\nu} \left\|  u(t) \right\|_{V}^2 \right) \left\|r(t) \right\|_{H}^2  \le 0.
\]
Using the Gronwall inequality, we infer
\[
    \left\| r (t) \right\|_{H}^2 
    \le \left\|r(0) \right\|_{H}^2 \ \exp \left(  - \mu  t  + \frac{1}{\nu} \displaystyle \int_{0}^t\left\|  u(s) \right\|_{V}^2 \,{\rm d}s   \right),
\]
for any time $t>0$.
\end{proof}

We have now all the ingredients to prove Theorem \ref{pathwise_data_ass}.
\begin{proof}[Proof of Theorem \ref{pathwise_data_ass}]
We want to consider the limit as $t\to+\infty$  in the right hand side of  \eqref{error-u-U}.

Let $G$ satisfy the Assumptions of Theorem \ref{NS-wp}.
From \eqref{stima-certa}  we know that for any initial velocity $u_0\in H$, 
\begin{equation}\label{stima-limsup-normaV}
\limsup_{t \rightarrow +\infty}\frac 1{\nu t}\int_0^t\|u(s)\|^2_V\,{\rm d}s\le \frac 2{\nu^2}\left(\|G\|^2_{L_{HS}(U,H)} + \frac{\|f\|^2_{V^*}}{\nu}\right)
\end{equation}
with probability one. From now on we work pathwise. 
From assumption \eqref{cond_mu} there exists a positive constant $\varepsilon$ such that
\[
 \mu-\frac{2}{\nu^2}\left(\|G\|^2_{L_{HS}(U,H)}+\frac{\|f\|^2_{V^*}}{\nu}\right) - \varepsilon>0.
\]
Now we write the exponent in the right hand side of  \eqref{error-u-U} as follows
\begin{multline*}
-t \mu + \frac{1}{\nu }  \int_{0}^t  \| u(s) \|_{V}^2 {\rm d}s  
=
-t \left[\mu - \frac 2{\nu^2} \left(\|G\|^2_{L_{HS}(U,H)} + \frac{\|f\|^2_{V^*}}{\nu} \right) -\varepsilon   \right]
\\+t \left[\frac{1}{\nu t}  \int_{0}^t  \left\| u(s) \right\|_{V}^2 ds -  \frac 2{\nu^2} \left(\|G\|^2_{L_{HS}(U,H)} +  \frac{\|f\|^2_{V^*}}{\nu }\right) -\varepsilon \right].
\end{multline*}
Thanks to \eqref{stima-limsup-normaV} we get that
\[
\lim_{t\to +\infty} e^{t \left[\frac{1}{\nu t}  \int_{0}^t  \left\| u(s) \right\|_{V}^2 ds -  \frac 2{\nu^2} \left(\|G\|^2_{L_{HS}(U,H)} + \frac{\|f\|^2_{V^*}}{\nu }\right) -\varepsilon \right]}=0
\]
for any $\varepsilon>0$. Moreover
\[
\lim_{t\to +\infty} e^{-t \left[\mu - \frac 2{\nu^2} \left(\|G\|^2_{L_{HS}(U,H)} + \frac{\|f\|^2_{V^*}}{\nu }\right) -\varepsilon   \right]}=0.
\]
Therefore
\[
\lim_{t\to +\infty} e^{-t \mu + \frac{1}{\nu }  \int_{0}^t  \| u(s) \|_{V}^2 {\rm d}s  }=0
\]
and by \eqref{error-u-U}  also $u-U$ vanishes for large times. 
In particular,  we have obtained that there exists a time $\tau>0$ such that for any $t>\tau$ we have
\[
\|u(t)-U(t)\|_H^2 \le 
\|u_0-U_0\|^2_H
  \exp\left(-\left(\mu-\tfrac{2}{\nu^2}\big(\|G\|^2_{L_{HS}(U,H)}+\tfrac 1\nu \|f\|^2_{V^*}\big)-\varepsilon \right)t\right)
\]
showing the exponential convergence rate.
\end{proof}

\begin{remark} 
Assuming that the Navier–Stokes equation \eqref{2D-NS-additive-abs} admits a unique (and hence ergodic) invariant measure, one can obtain an alternative proof of Theorem \ref{pathwise_data_ass} by applying the Birkhoff Ergodic Theorem. We point out, however, that although this approach leads to the same conclusion as Theorem \ref{pathwise_data_ass}, it requires more restrictive assumptions on the operator $G$, since they must guarantee the existence and uniqueness of an invariant measure rather than merely the well-posedness of the equation. For completeness, we present this argument in Appendix \ref{sec:appC}.
\end{remark}

\begin{remark} Notice that, by taking $G=0$ in equation \eqref{2D-NS-additive-abs}, we recover (modulo a constant) the same condition on the nudging parameter $\mu$ given in \cite{azouani2014continuous} (see Theorem \ref{AOT_thm}) for the deterministic setting, in the case of a time independent forcing term $f$ (compare condition \eqref{cond_mu_det} with condition \eqref{cond_mu_add}).

Thus, in the presence of additive stochastic noise, our analysis shows that convergence still holds almost surely, with an exponential rate, provided that the nudging parameter $\mu$ is chosen large enough to balance both the intensity of the deterministic forcing and that of the stochastic perturbation.
This demonstrates that our result can be viewed as a natural generalization of the deterministic result in \cite{azouani2014continuous}, extending it to the case of stochastic forcing.
\end{remark}

\textbf{Acknowldegments.} 
B.F. and M.Z. are members of Gruppo Nazionale per l’Analisi Matematica, la Probabilità e le loro Applicazioni (GNAMPA), Istituto Nazionale di Alta Matematica (INdAM).
\\
B.F. and H.B.  gratefully  acknowledge the financial support from the "Dipartimento di Eccellenza" program and the CAMRisk centre at the Department of Economics and Management of the University of Pavia. 
Indeed, this work was initiated while H.B. was visiting the Department of Economics at the University of Pavia in Spring 2025.
\\
H.B.  gratefully acknowledges the  support of the National Science Foundation under Grant No. DMS-2147189.
H.B. was also in residence at the Simons Laufer Mathematical Sciences Institute (SLMath) in Berkeley, California, during Fall 2025, supported by the National Science Foundation under Grant No. DMS-1928930.
\\
M.Z. gratefully  acknowledges the financial support of the project  ``Prot. P2022TX4FE\_02 -  Stochastic particle-based anomalous reaction-diffusion models with heterogeneous interaction for radiation therapy'' financed by the European Union - Next Generation EU, Missione 4-Componente 1-CUP: D53D23018970001.

\appendix

\section{Apriori estimates: the multiplicative noise case}
As before, everywhere we assume that $f\in V^*$ and the initial data are in $H$. We only specify the assumptions on the covariance of the noise.

\label{sec:appA}
\subsection{Estimates in expected value}
\begin{lemma}
\label{lem_mul_2_0}
Let Assumption \ref{assumption-stochastic1} be in force.
Let $u$ denote the solution to the Navier-Stokes equation \eqref{NS_abs_mult}. 
\begin{itemize}
    \item [(i)] If Assumption \ref{assumption-stochastic2}(B) is in force, then for every $t \ge 0$
\begin{equation*}
\mathbb{E}\left[\|u(t)\|^2_H\right]  + \nu \int_0^t \mathbb{E}\left[\|u(s)\|^2_V\right]\, {\rm d}s \le \|u_0\|^2_H + \left(K_B + \frac{1}{\nu} \|f\|^2_{V^*} \right)t.
\end{equation*}
 \item [(ii)] If Assumption \ref{assumption-stochastic2}(S) is in force, then for every $t \ge 0$
\[
\mathbb{E}\left[\|u(t)\|^2_H\right]  + \nu \int_0^t \mathbb{E}\left[\|u(s)\|^2_V\right]\, {\rm d}s 
\le 
\|u_0\|^2_H + \left(K_S + 
(1-\gamma)  \Big( \frac{2\gamma} {\lambda_1 \nu}\Big)^{\frac{\gamma}{1-\gamma}} \widetilde{K_S}^{\frac{1}{1-\gamma}}
 +
\frac{2}{\nu}\|f\|^2_{V^*} \right)t.
\]
  \item [(iii)] If Assumption \ref{assumption-stochastic2}(L) is in force and $\nu > \frac{\widetilde K_L}{\lambda_1}$, then for every $t \ge 0$
\begin{equation*}
\mathbb{E}\left[\|u(t)\|^2_H\right]  + \left( \nu - \frac{\widetilde K_L}{\lambda_1} \right) \int_0^t \mathbb{E}\left[\|u(s)\|^2_V\right]\, {\rm d}s \le \|u_0\|^2_H + \left(K_L + \frac{1}{\nu} \|f\|^2_{V^*} \right)t.
\end{equation*}
\end{itemize}
\end{lemma}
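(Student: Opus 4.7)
The natural starting point is Itô's formula applied to the functional $\|u(t)\|_H^2$ for the solution of \eqref{NS_abs_mult}. Using the antisymmetry $\langle B(u,u),u\rangle=0$ from \eqref{B}, this yields
\[
d\|u(t)\|_H^2 + 2\nu\|u(t)\|_V^2\,dt = 2\langle f,u(t)\rangle\,dt + \|G(u(t))\|_{L_{HS}(U,H)}^2\,dt + 2(u(t),G(u(t))\,dW(t))_H.
\]
I would then introduce the localizing stopping times $\tau_n=\inf\{t\ge 0:\|u(t)\|_H\ge n\}$ (which diverge to $+\infty$ $\PX$-a.s.\ thanks to Theorem \ref{NS_mult-wp}), stop at $t\wedge\tau_n$, take expectation to kill the martingale contribution, and pass to the limit $n\to+\infty$ using Fatou on the left-hand side together with monotone/dominated convergence on the right-hand side (the latter being legitimate since the deterministic integrands are controlled by $\|u\|_V^2$ and powers of $\|u\|_H$ that are integrable after the a priori step itself is closed).

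Once this is in place, the proof splits into the three cases by means of elementary convex inequalities. In all three situations I would estimate the forcing term by Young's inequality,
\[
2\langle f,u\rangle \le \nu\|u\|_V^2 + \tfrac{1}{\nu}\|f\|_{V^*}^2 \qquad\text{or}\qquad 2\langle f,u\rangle \le \tfrac{\nu}{2}\|u\|_V^2 + \tfrac{2}{\nu}\|f\|_{V^*}^2,
\]
choosing the split so that the remaining coefficient of $\|u\|_V^2$ on the right leaves $\nu\|u\|_V^2$ on the left after absorbing the noise contribution. For case (i) I would simply insert the bound $\|G(u)\|^2\le K_B$, with the first Young split above, obtaining directly the stated inequality. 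For case (iii) I would use $\|G(u)\|^2\le K_L+\widetilde{K}_L\|u\|_H^2$, together with the first Young split, and then invoke the Poincaré inequality \eqref{Poincarè} to replace $\widetilde{K}_L\|u\|_H^2$ by $(\widetilde{K}_L/\lambda_1)\|u\|_V^2$; the hypothesis $\nu>\widetilde{K}_L/\lambda_1$ is precisely what guarantees that the resulting coefficient $\nu-\widetilde{K}_L/\lambda_1$ in front of $\int_0^t\mathbb{E}\|u(s)\|_V^2\,ds$ is strictly positive.

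The case (ii) is the delicate one and the place where I expect the main bookkeeping to occur, since the sublinear growth $\|G(u)\|^2\le K_S+\widetilde{K}_S\|u\|_H^{2\gamma}$ with $\gamma\in(0,1)$ must be split so that the $\|u\|_H^2$ part is absorbed into $\|u\|_V^2$ via Poincaré while leaving a purely constant remainder. I would apply Young's inequality in the form $ab\le \gamma\delta^{1/\gamma}a^{1/\gamma}+(1-\gamma)\delta^{-1/(1-\gamma)}b^{1/(1-\gamma)}$ with $a=\|u\|_H^2$, $b=\widetilde{K}_S$, and with $\delta>0$ chosen so that the coefficient in front of $\|u\|_H^2$ equals $\nu\lambda_1/2$, namely $\delta=(\nu\lambda_1/(2\gamma))^\gamma$. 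This produces
\[
\widetilde{K}_S\|u\|_H^{2\gamma}\le \tfrac{\nu\lambda_1}{2}\|u\|_H^2 + (1-\gamma)\Bigl(\tfrac{2\gamma}{\lambda_1\nu}\Bigr)^{\gamma/(1-\gamma)}\widetilde{K}_S^{1/(1-\gamma)},
\]
and applying Poincaré to the first term turns it into $(\nu/2)\|u\|_V^2$. Together with the $\tfrac{\nu}{2}\|u\|_V^2$ coming from the $f$-split, this absorbs exactly one factor of $\nu$ from the $2\nu\|u\|_V^2$ on the left, leaving the advertised constant
\[
K_S+(1-\gamma)\Bigl(\tfrac{2\gamma}{\lambda_1\nu}\Bigr)^{\gamma/(1-\gamma)}\widetilde{K}_S^{1/(1-\gamma)}+\tfrac{2}{\nu}\|f\|_{V^*}^2
\]
as the coefficient of $t$ on the right-hand side. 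The main technical obstacle is not any single step but rather the coordination between the Young-splitting constants and the Poincaré factor: the choice of $\delta$ must be tuned so that the Poincaré step consumes at most half of the available dissipation $2\nu\|u\|_V^2$, leaving the other half to absorb the forcing term.
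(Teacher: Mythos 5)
Your proposal is correct and follows essentially the same route as the paper: Itô's formula for $\|\cdot\|_H^2$ with the cancellation from \eqref{B}, Young's inequality on the forcing term, expectation killing the (localized) martingale, and then case-by-case absorption of the noise term using Poincaré and, for (S), a Young splitting tuned exactly as in the paper (the paper applies Poincaré before Young on $\|u\|_H^{2\gamma}$ rather than after, but the resulting constant is identical). Your explicit localization via the stopping times $\tau_n$ is a slightly more careful rendering of the step the paper summarizes by saying the stochastic integral is a local martingale.
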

\begin{proof}
We apply the It\^o formula to the functional $\|\cdot\|^2_H$ and, exploiting \eqref{B}, we infer, $\mathbb{P}$-a.s., for any $t\ge0$,
\begin{multline}
\label{sti1_app_mult}
\|u(t)\|^2_H + 2 \nu\int_0^t \|u(s)\|^2_V\, {\rm d}s  = \|u_0\|_H^2+ \int_0^t\|G(u(s))\|^2_{L_{HS}(U,H)}{\rm d}s 
\\
 +2 \int_0^t\langle u(s), G(u(s))\, {\rm d}W(s)\rangle + 2 \int_0^t \langle u(s), f\rangle\, {\rm d}s.
\end{multline}
The Young and Poincar\'e inequalities yield, for  arbitrary $\eta>0$,
\begin{equation}
\label{stima_F}
   2\langle u, f\rangle \le 2 \|u\|_V\|f\|_{V^*}\le \eta \nu \|u\|^2_V + \frac{1}{\eta \nu} \|f\|^2_{V^*} .
\end{equation}
Thus, by taking the expected value on both sides of \eqref{sti1_app_mult}, recalling that the stochastic integral is a (local) martingale, we infer 
\begin{multline}
\label{sti1_app_mult2}
\mathbb{E}\left[\|u(t)\|^2_H\right] + (2-\eta) \nu\int_0^t \mathbb{E}\left[\|u(s)\|^2_V\right]\, {\rm d}s  
\le  \|u_0\|_H^2+ \int_0^t\mathbb{E}\left[\|G(u(s))\|^2_{L_{HS}(U,H)}\right]{\rm d}s + \frac{1}{\eta \nu} \|f\|^2_{V^*}t.
\end{multline}
Assumption \ref{assumption-stochastic2}(B) and the choice  $\eta=1$ in estimate \eqref{stima_F} immediately yields statement (i).
Assumption \ref{assumption-stochastic2}(S), the Poincar\'e  and Young inequalities yield, for  arbitrary $\varepsilon>0$,
\[
\|G(u)\|^2_{L_{HS}(U,H)}\le K_S + \frac{\widetilde K_S}{\lambda_1^\gamma} \|u\|^{2\gamma}_V
\le 
K_S +  (1-\gamma)\left(\frac{\gamma}{\varepsilon\lambda_1 \nu}\right)^{\frac{\gamma}{1-\gamma}} \widetilde{K_S}^{\frac{1}{1-\gamma}} + \varepsilon \nu \|u\|^2_V.
\]
With the choice $\eta=\varepsilon=\frac 12$ we infer the result in (ii).

Assumption \ref{assumption-stochastic2}(L) and the Poincar\'e  inequality yield,
\[
\|G(u)\|^2_{L_{HS}(U,H)}\le K_L +\widetilde K_L \|u\|^{2}_H
\le K_S + \frac{\widetilde{K}_L}{\lambda_1} \|u\|^2_V,
\]
from which statement (iii) follows by taking $\eta=1$ and assuming $\nu > \frac{\widetilde{K}_L}{\lambda_1}$. This concludes the proof.
\end{proof}

\begin{lemma}
\label{lem_mul_2_new}
Let Assumption \ref{assumption-stochastic1} be in force.
Let $u$ denote the solution to the Navier-Stokes equation \eqref{NS_abs_mult}. 
\begin{itemize}
    \item [(i)] If Assumption \ref{assumption-stochastic2}(B) is in force, then 
\begin{equation*}
\sup_{t \ge 0}\mathbb{E}\left[\|u(t)\|^2_H\right]   \le \|u_0\|^2_H + \frac{1}{\nu\lambda_1}\left(K_B + \frac{1}{\nu} \|f\|^2_{V^*} \right).
\end{equation*}
 \item [(ii)] If Assumption \ref{assumption-stochastic2}(S) is in force, then 
\[
\sup_{t \ge 0}\mathbb{E}\left[\|u(t)\|^2_H\right]   \le \|u_0\|^2_H + \frac{1}{\nu\lambda_1}\left(K_S + 
 (1-\gamma)\left(\frac{2\gamma}{\lambda_1 \nu}\right)^{\frac{\gamma}{1-\gamma}} \widetilde{K_S}^{\frac{1}{1-\gamma}}
 +
\frac{2}{\nu}\|f\|^2_{V^*}  \right).
\]
  \item [(iii)] If Assumption \ref{assumption-stochastic2}(L) is in force and $\nu > \frac{\widetilde K_L}{\lambda_1}$, then 
\begin{equation*}
\sup_{t \ge 0}\mathbb{E}\left[\|u(t)\|^2_H\right]  \le \|u_0\|^2_H + \frac{1}{\nu \lambda_1-\widetilde K_L}\left(K_L + \frac{1}{\nu} \|f\|^2_{V^*} \right).
\end{equation*}
\end{itemize}
\end{lemma}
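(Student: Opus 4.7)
The plan is to upgrade the linear-in-$t$ energy estimates of Lemma \ref{lem_mul_2_0} to uniform-in-time bounds by exploiting the Poincar\'e inequality to generate an exponentially decaying term, and then applying a differential form of Gr\"onwall's inequality. The starting point is the same It\^o expansion as in the proof of Lemma \ref{lem_mul_2_0}, namely
\begin{equation*}
\mathrm{d}\|u(t)\|_H^2 + 2\nu\|u(t)\|_V^2\,\mathrm{d}t
=\|G(u(t))\|_{L_{HS}(U,H)}^2\,\mathrm{d}t + 2\langle u(t), G(u(t))\mathrm{d}W(t)\rangle + 2\langle u(t), f\rangle\,\mathrm{d}t,
\end{equation*}
combined with the Young bound $2\langle u,f\rangle \le \eta\nu\|u\|_V^2 + \frac{1}{\eta\nu}\|f\|_{V^*}^2$ and the Poincar\'e inequality $\|u\|_V^2\ge\lambda_1\|u\|_H^2$.

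For case (i), I would take expectation and choose $\eta=1$, obtaining
\begin{equation*}
\frac{\mathrm{d}}{\mathrm{d}t}\mathbb{E}\|u(t)\|_H^2 + \nu\,\mathbb{E}\|u(t)\|_V^2 \le K_B + \frac{1}{\nu}\|f\|_{V^*}^2,
\end{equation*}
and then dominate $\nu\|u\|_V^2$ from below by $\nu\lambda_1\|u\|_H^2$ via Poincar\'e. The resulting scalar ODE inequality
$\phi'(t) + \nu\lambda_1 \phi(t)\le K_B + \frac{1}{\nu}\|f\|_{V^*}^2$
with $\phi(t):=\mathbb{E}\|u(t)\|_H^2$ is solved by Gr\"onwall, yielding
$\phi(t)\le e^{-\nu\lambda_1 t}\|u_0\|_H^2 + \frac{1}{\nu\lambda_1}\bigl(K_B+\tfrac{1}{\nu}\|f\|_{V^*}^2\bigr)$, and taking $\sup_{t\ge 0}$ gives (i).

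For case (iii), the same scheme applies but the noise contribution $\widetilde{K}_L\mathbb{E}\|u\|_H^2$ competes with the Poincar\'e term. After applying the linear growth bound on $G$ and again choosing $\eta=1$, I obtain
\begin{equation*}
\frac{\mathrm{d}}{\mathrm{d}t}\mathbb{E}\|u(t)\|_H^2 + (\nu\lambda_1-\widetilde{K}_L)\,\mathbb{E}\|u(t)\|_H^2 \le K_L + \frac{1}{\nu}\|f\|_{V^*}^2,
\end{equation*}
and the assumption $\nu>\widetilde{K}_L/\lambda_1$ ensures the coefficient is positive, so Gr\"onwall gives (iii). For case (ii), as in Lemma \ref{lem_mul_2_0}(ii), the sublinear bound is split by Young's inequality into a term $\varepsilon\nu\|u\|_V^2$ absorbed on the left and a constant term $(1-\gamma)\bigl(\tfrac{\gamma}{\varepsilon\lambda_1\nu}\bigr)^{\gamma/(1-\gamma)}\widetilde{K}_S^{1/(1-\gamma)}$; choosing $\varepsilon=\eta=\tfrac12$ leaves a factor $\nu\|u\|_V^2$, which via Poincar\'e produces the $\nu\lambda_1$ decay rate, and Gr\"onwall delivers (ii).

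The main obstacle is essentially bookkeeping: ensuring that after using Young's inequality to absorb both the forcing term and (in case (ii)) the sublinear noise contribution, a full factor $\nu\|u\|_V^2$ (or $(\nu\lambda_1-\widetilde{K}_L)\|u\|_H^2$ in case (L)) still remains on the left-hand side with a positive coefficient so that Poincar\'e produces a genuine exponential decay. No new analytic idea beyond the stochastic energy identity, Young, Poincar\'e, and scalar Gr\"onwall is required; the technical care lies in tracking the constants to match exactly those announced in the statement.
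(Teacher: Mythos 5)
Your proposal is correct and follows essentially the same route as the paper: the paper also starts from the It\^o energy identity (via Lemma \ref{lem_mul_2_0}), uses the Poincar\'e inequality to convert the $V$-norm dissipation into an $H$-norm damping term, and applies Gr\"onwall to obtain $e^{-a\lambda_1 t}\|u_0\|_H^2 + b/(a\lambda_1)$ before bounding the exponential by one. The only cosmetic differences are that you use the differential form of Gr\"onwall while the paper uses the integral form, and in case (L) you apply Poincar\'e after absorbing $\widetilde{K}_L\|u\|_H^2$ rather than before, which yields the same constant $\nu\lambda_1-\widetilde{K}_L$.
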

\begin{proof}
We start from the estimate \eqref{sti1_app_mult}; then the term with the Hilbert-Schmidt norm of $G$ is estimated according to the three Assumptions \ref{assumption-stochastic2}. Thus we get
\[
\mathbb{E}\left[\|u(t)\|^2_H\right]  + a \lambda_1 \int_0^t \mathbb{E}\left[\|u(s)\|^2_H\right]\, {\rm d}s \le \|u_0\|^2_H + b t,\qquad \forall t\ge 0,
\]
where 
\begin{equation*}
a=
    \begin{cases}
    \nu & \text{under Assumptions \ref{assumption-stochastic2}(B)-(S)},
    \\
    \nu -\frac{\widetilde{K_L}}{\lambda_1} & \text{under Assumptions \ref{assumption-stochastic2}(L)},
    \end{cases}
\end{equation*}
and 
\begin{equation*}
b=
    \begin{cases}
    K_B + \frac{1}{\nu} \|f\|^2_{V^*} & \text{under Assumptions \ref{assumption-stochastic2}(B)},
    \\
    K_S + 
(1-\gamma)\left(\frac{2\gamma}{\lambda_1 \nu}\right)^{\frac{\gamma}{1-\gamma}} \widetilde{K_S}^{\frac{1}{1-\gamma}}
 +
\frac{2}{\nu}\|f\|^2_{V^*} & \text{under Assumptions \ref{assumption-stochastic2}(S)},
    \\
    K_L + \frac{1}{\nu} \|f\|^2_{V^*}  & \text{under Assumptions \ref{assumption-stochastic2}(L)}.
    \end{cases}
\end{equation*}
The Gronwall lemma yields
\[
\mathbb{E}\left[\|u(t)\|^2_H\right]
\le
e^{-  a\lambda_1  t  } \|u_0\|^2_H+\frac{b}{a\lambda_1}
\le \|u_0\|^2_H+\frac{b}{a\lambda_1},
\]
for any time $t\ge 0$, which proves the result.
\end{proof}

Next we prove similar estimates for any power $q>2$.

\begin{lemma}
\label{lem_mul_q}
Let Assumption \ref{assumption-stochastic1} be in force.
Let $u$ denote the solution to the Navier-Stokes equation \eqref{NS_abs_mult}. 
\begin{itemize}
    \item [(i)] If Assumption \ref{assumption-stochastic2}(B) is in force, then 
\begin{equation*}
\sup_{t\ge 0}\mathbb{E}\left[\|u(t)\|^q_H\right]  
\le 
\|u_0\|^q_H + \frac{4C_1}{q\nu\lambda_1}, \quad \text{for all} \ q\in (2, +\infty),
\end{equation*}
where $C_1=\frac{8}{q\nu\lambda_1}\left(\left(K_B \frac{q(q-1)}{2}\right)^{\frac q2}+  \left(\frac{2q}{ \nu} \right)^{\frac q2} \|f\|^q_{V^*} \right)$.
 \item [(ii)] If Assumption \ref{assumption-stochastic2}(S) is in force, then 
   \begin{equation*}
\sup_{t\ge 0}\mathbb{E}\left[\|u(t)\|^q_H\right]  \le \|u_0\|^q_H + \frac{4C_2}{q\nu\lambda_1}, \quad \text{for all} \ q \in (2, +\infty),
\end{equation*}
where $C_2=\frac{12}{q\nu\lambda_1}\left(\left(K_S \frac{q(q-1)}{2}\right)^{\frac q2} + \left(\widetilde K_S \frac{q(q-1)}{2}\right)^{\frac{q}{2(1-\gamma)}}  +  \left(\frac{2q}{ \nu} \right)^{\frac q2} \|f\|^q_{V^*} \right)$.
  \item [(iii)] If Assumption \ref{assumption-stochastic2}(L) is in force and $\nu > \frac{2\widetilde K_L}{\lambda_1}$, then 
\begin{equation*}
\sup_{t\ge 0}\mathbb{E}\left[\|u(t)\|^q_H\right]  \le \|u_0\|^q_H + \frac{C_3}{\frac{q\nu\lambda_1}{4}-\frac{q(q-1)\widetilde{K}_L}{2}}, \quad \text{for all} \ q\in \left(2, 1+ \frac{\nu\lambda_1}{2\widetilde K_L}\right),
\end{equation*}
where $C_3=\frac{8}{q\nu\lambda_1}\left(\left(K_L \frac{q(q-1)}{2}\right)^{\frac q2}+  \left(\frac{2q}{ \nu} \right)^{\frac q2} \|f\|^q_{V^*} \right)$.
\end{itemize}
\end{lemma}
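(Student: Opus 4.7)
The natural approach is to apply It\^o's formula to $\|u(t)\|_H^q$ for $q>2$, absorb the forcing and the noise contributions into the dissipation $\nu\|u\|_V^2$ via the Cauchy--Schwarz, Young and Poincar\'e inequalities, and close the estimate through a Gronwall argument. I would set $F(v):=\|v\|_H^q$ on $H$, note that $DF(v)=q\,\|v\|_H^{q-2}v$, and use the standard upper bound
\[
\operatorname{tr}\bigl(D^2F(v)\,G(v)G(v)^*\bigr)\;\le\;q(q-1)\,\|v\|_H^{q-2}\,\|G(v)\|^2_{L_{HS}(U,H)},
\]
which follows by Cauchy--Schwarz on the rank-one correction $q(q-2)\,\|v\|_H^{q-4}\,v\otimes v$. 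Together with the antisymmetry identity $\langle B(u,u),u\rangle=0$, It\^o's formula (justified via Galerkin truncation in the usual variational framework) yields, $\PP$-a.s.,
\[
d\|u\|_H^q + q\nu\,\|u\|_H^{q-2}\|u\|_V^2\,dt \;\le\; q\|u\|_H^{q-2}\langle f,u\rangle\,dt + \tfrac{q(q-1)}{2}\|u\|_H^{q-2}\|G(u)\|^2_{L_{HS}(U,H)}\,dt + dM_t,
\]
where $M_t$ is the stochastic integral, a local martingale.

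Next I would estimate the two drift contributions. Cauchy--Schwarz and Young give $q\langle f,u\rangle\le \tfrac{q\nu}{2}\|u\|_V^2+\tfrac{q}{2\nu}\|f\|^2_{V^*}$, so the forcing yields $\tfrac{q\nu}{2}\|u\|_H^{q-2}\|u\|_V^2+\tfrac{q}{2\nu}\|u\|_H^{q-2}\|f\|^2_{V^*}$; the first summand is absorbed by the dissipation on the left, and the remaining $\tfrac{q\nu}{2}\|u\|_H^{q-2}\|u\|_V^2$ is bounded below by $\tfrac{q\nu\lambda_1}{2}\|u\|_H^q$ via Poincar\'e. For the trace term I would invoke one of Assumptions~\ref{assumption-stochastic2}\,(B),(S),(L), producing respectively a pure $\|u\|_H^{q-2}$ term; a sum of a $\|u\|_H^{q-2}$ term and a $\|u\|_H^{q-2(1-\gamma)}$ term, both of strictly sub-$q$ order; or a $\|u\|_H^{q-2}$ term together with a genuine $\tfrac{q(q-1)\widetilde K_L}{2}\|u\|_H^q$ term. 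A Young inequality of the form $\|u\|_H^\sigma\le \epsilon\,\|u\|_H^q+C(\epsilon,\sigma,q)$, with conjugate exponents $p=q/(q-2)$ in cases (B) and (L) and $p=q/(q-2(1-\gamma))$ for the sublinear part in (S), is then applied to the sub-$q$ powers; choosing $\epsilon$ so that the total $\|u\|_H^q$ contribution of the ``$\epsilon$'' pieces equals $\tfrac{q\nu\lambda_1}{4}$ explains the exponents $q/2$ and $q/(2(1-\gamma))$ appearing in $C_1,C_2,C_3$.

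After these manipulations I expect the inequality
\[
d\|u\|_H^q + \alpha\,\|u\|_H^q\,dt \;\le\; C\,dt + dM_t,
\]
with $\alpha=\tfrac{q\nu\lambda_1}{4}$ in cases (B) and (S) and $\alpha=\tfrac{q\nu\lambda_1}{4}-\tfrac{q(q-1)\widetilde K_L}{2}$ in case (L); the latter is strictly positive precisely when $q<1+\nu\lambda_1/(2\widetilde K_L)$, matching the admissible range in (iii). Localizing $M_t$ by a reducing sequence of stopping times $\tau_n\uparrow+\infty$, taking expectations, and then sending $n\to\infty$ via Fatou and monotone convergence removes the martingale and gives $\tfrac{d}{dt}\EE\|u(t)\|_H^q+\alpha\,\EE\|u(t)\|_H^q\le C$. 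A standard Gronwall argument then produces
\[
\EE\|u(t)\|_H^q \;\le\; e^{-\alpha t}\|u_0\|_H^q + \tfrac{C}{\alpha}(1-e^{-\alpha t}) \;\le\; \|u_0\|_H^q + \tfrac{C}{\alpha},
\]
uniformly in $t\ge 0$.

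The main obstacle will be the careful bookkeeping of Young's inequality needed to identify the explicit constants $C_1,C_2,C_3$, and in particular to pinpoint the sharp upper bound $q<1+\nu\lambda_1/(2\widetilde K_L)$ in case (L), which requires simultaneously balancing the forcing, the lower-order noise contribution, and the $q$-th order multiplicative noise term against the available dissipation $\tfrac{q\nu\lambda_1}{2}\|u\|_H^q$. A secondary technical point is the rigorous justification of It\^o's formula for $\|\cdot\|_H^q$ in the variational setting, which is standard and can be handled by a Galerkin approximation followed by passing to the limit.
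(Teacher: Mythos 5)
Your proposal is correct and follows essentially the same route as the paper's proof: It\^o's formula for $\|\cdot\|_H^q$, absorption of the forcing into half of the dissipation, Poincar\'e, Young inequalities on the sub-$q$ powers calibrated so that the surviving coefficient of $\|u\|_H^q$ is $\tfrac{q\nu\lambda_1}{4}$ (diminished by $\tfrac{q(q-1)\widetilde K_L}{2}$ in case (L), which yields the restriction $q<1+\tfrac{\nu\lambda_1}{2\widetilde K_L}$), and finally expectation plus Gronwall. The only differences are presentational (you make the localization of the martingale and the Galerkin justification of It\^o's formula explicit, and leave the exact constants $C_1,C_2,C_3$ to bookkeeping), not substantive.
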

\begin{proof}
We apply the It\^o formula to the functional $\|\cdot\|^q_H$ for $q >  2$. 
Exploiting \eqref{B}, we infer $\mathbb{P}$-a.s., for every $t \ge 0$,
\begin{multline}
\label{eq_q_u}
{\rm d}\|u(t)\|^q_H + q \nu \|u(t)\|_H^{q-2}\|u(t)\|^2_V\, {\rm d}t
\le \frac{q(q-1)}{2}\|u(t)\|^{q-2}_H\|G(u(t))\|^2_{L_{HS}(U,H)}\, {\rm d}t 
\\
+q\|u(t)\|^{q-2}_H\langle u(t), G(u(t)){\rm d}W(t)\rangle + q\|u(t)\|_H^{q-2}\langle u(t), f\rangle\,{\rm d}t.
\end{multline}
Using repeatedly the Young inequality, for any $\alpha, \varepsilon, \eta>0$, we estimate 
\begin{align}
\label{stima_f_q}
    q\|u\|_H^{q-2}\langle u, f\rangle 
    &\le q \|u\|_H^{q-2} \left( \eta \nu \|u\|^2_V + \frac{1}{\eta\nu}\|f\|^2_{V^*}\right)
    \\\notag
    & \le q\eta \nu \|u\|^{q-2}_H \|u\|^{2}_V+ \frac{\alpha}{\varepsilon}\left(\frac{q}{\eta \nu} \right)^{\frac q2} \|f\|^q_{V^*} + \frac{\varepsilon}{\alpha}\|u\|^q_H.
\end{align}
Thus, from the Poincar\'e inequality we infer $\mathbb{P}$-a.s., for every $t \ge 0$,
\begin{multline*}
{\rm d}\|u(t)\|^q_H + \left(q \nu\lambda_1(1-\eta)-\frac{\varepsilon}{\alpha}\right) \|u(t)\|_H^{q}\, {\rm d}t
\le \frac{q(q-1)}{2}\|u(t)\|^{q-2}_H\|G(u(t))\|^2_{L_{HS}(U,H)}\, {\rm d}t 
\\
+q\|u(t)\|^{q-2}_H\langle u(t), G(u(t)){\rm d}W(t)\rangle +  \frac{\alpha}{\varepsilon}\left(\frac{q}{\eta \nu} \right)^{\frac q2} \|f\|^q_{V^*} \,{\rm d}t.
\end{multline*}
Taking the expected value on both sides of the above inequality, since the stochastic integral is a (local) martingale, we get 
\begin{multline*}
  \frac{{\rm d}}{{\rm d}t}\mathbb{E}\left[\|u(t)\|^q_H\right] + \left(q \nu\lambda_1(1-\eta)-\frac{\varepsilon}{\alpha}\right) \mathbb{E}\left[\|u(t)\|_H^{q}\right]\,
\\
\le \frac{q(q-1)}{2}\mathbb{E}\left[\|u(t)\|^{q-2}_H\|G(u(t))\|^2_{L_{HS}(U,H)}\right]
+  \frac{\alpha}{\varepsilon}\left(\frac{q}{\eta \nu} \right)^{\frac q2} \|f\|^q_{V^*}.  
\end{multline*}
Now we estimate the second order term in the above expression, according to the different assumptions made on the noise.
\begin{itemize}
    \item [(i)] If Assumption \ref{assumption-stochastic2}(B) is in force, then by the Young inequality we infer, for any $\beta, \varepsilon>0$
\begin{equation*}
 \frac{q(q-1)}{2}\|u\|^{q-2}_H\|G(u)\|^2_{L_{HS}(U,H)} \le K_B \frac{q(q-1)}{2}\|u\|^{q-2}_H \le \frac{\beta}{\varepsilon}\left(K_B \frac{q(q-1)}{2}\right)^{\frac q2} +\frac{\varepsilon}{\beta}\|u\|^q_H.
\end{equation*}
By selecting $\eta=\frac{1}{2}$, $\alpha=\beta=8$, $\varepsilon=q\nu\lambda_1$, we obtain the estimate 
\begin{equation*}
  \frac{{\rm d}}{{\rm d}t}\mathbb{E}\left[\|u(t)\|^q_H\right] + \frac{q \nu\lambda_1}{4}  \mathbb{E}\left[\|u(t)\|_H^{q}\right]
\\ 
\le \frac{8}{q\nu\lambda_1}\left(\left(K_B \frac{q(q-1)}{2}\right)^{\frac q2}+  \left(\frac{2q}{ \nu} \right)^{\frac q2} \|f\|^q_{V^*} \right).  
\end{equation*}
By the Gronwall lemma we infer 
\[
\mathbb{E}\left[ \|u(t)\|^q_H\right]\le \|u_0\|^q_H e^{-\frac{q \nu \lambda_1}4 t}+ \frac{4C_1}{q \nu \lambda_1} (1- e^{\frac{-q \nu \lambda_1}{4}t}),
\]
which proves statement (i).
\item [(ii)] If Assumption \ref{assumption-stochastic2}(S) is in force, then by the Young inequality we infer, for any $\beta, \delta, \varepsilon>0$
\begin{multline*}
 \frac{q(q-1)}{2}\|u\|^{q-2}_H\|G(u)\|^2_{L_{HS}(U,H)} 
 \le  \frac{q(q-1)}{2}\|u\|^{q-2}_H\left( K_S + \widetilde K_S\|u\|_H^{2\gamma}\right) 
 \\
 \le \frac{\beta}{\varepsilon}\left(K_S \frac{q(q-1)}{2}\right)^{\frac q2} +\frac{\varepsilon}{\beta}\|u\|^q_H + \frac{\delta}{\varepsilon}\left(\widetilde K_S \frac{q(q-1)}{2}\right)^{\frac{q}{2(1-\gamma)}} + \frac{\varepsilon}{\delta}\|u\|^q_H.
\end{multline*}
By selecting $\eta=\frac{1}{2}$, $\alpha=\beta=\delta=12$, $\varepsilon=q\nu\lambda_1$, we obtain the estimate 
\begin{equation*}
  \frac{{\rm d}}{{\rm d}t}\mathbb{E}\left[\|u(t)\|^q_H\right] + \frac{q \nu\lambda_1}{4}  \mathbb{E}\left[\|u(t)\|_H^{q}\right]
\\
\le \frac{12}{q\nu\lambda_1}\left(\left(K_S \frac{q(q-1)}{2}\right)^{\frac q2} + \left(\widetilde K_S \frac{q(q-1)}{2}\right)^{\frac{q}{2(1-\gamma)}}  +  \left(\frac{2q}{\nu} \right)^{\frac q2} \|f\|^q_{V^*} \right).  
\end{equation*}
By the Gronwall lemma we infer 
\[
\mathbb{E}\left[ \|u(t)\|^q_H\right]\le \|u_0\|^q_H e^{-\frac{q \nu \lambda_1}4 t}+ \frac{4C_2}{q \nu \lambda_1} (1- e^{\frac{-q \nu \lambda_1}{4}t}),
\]
which proves statement (ii).
\item [(iii)] If Assumption \ref{assumption-stochastic2}(L) is in force, then by the Young inequality we infer, for any $\beta, \varepsilon>0$
\begin{align*}
 \frac{q(q-1)}{2}\|u\|^{q-2}_H\|G(u)\|^2_{L_{HS}(U,H)} 
 &\le  \frac{q(q-1)}{2}\|u\|^{q-2}_H\left( K_L + \widetilde K_L\|u\|_H^{2}\right) 
 \\
 &\le \frac{\beta}{\varepsilon}\left(K_L \frac{q(q-1)}{2}\right)^{\frac q2} + \frac{\varepsilon}{\beta}\|u\|^q_H + \widetilde{K}_L \frac{q(q-1)}{2}\|u\|^q_H.
\end{align*}
By selecting $\eta=\frac{1}{2}$, $\alpha=\beta=8$, $\varepsilon=q\nu\lambda_1$, we obtain the estimate 
\begin{equation*}
  {\rm d}\mathbb{E}\left[\|u(t)\|^q_H\right] + \left(\frac{q \nu\lambda_1}{4}-\widetilde{K}_L \frac{q(q-1)}{2} \right) \mathbb{E}\left[\|u(t)\|_H^{q}\right]\, {\rm d}t
\\
\le \frac{8}{q\nu\lambda_1}\left(\left(K_L \frac{q(q-1)}{2}\right)^{\frac q2}+  \left(\frac{2q}{ \nu} \right)^{\frac q2} \|f\|^q_{V^*} \right)\,{\rm d}t.  
\end{equation*}
If $\nu > \frac{2\widetilde K_L}{\lambda_1}$ and $q\in \left(2, 1+ \frac{\nu\lambda_1}{2\widetilde K_L}\right)$, by the Gronwall lemma we infer 
\[
\mathbb{E}\left[ \|u(t)\|^q_H\right]\le \|u_0\|^q_H e^{- \left(\frac{q \nu\lambda_1}{4}-\widetilde{K}_L \frac{q(q-1)}{2} \right) t}+ \frac{C_3}{\frac{q\nu\lambda_1}{4}-\frac{q(q-1)\widetilde{K}_L}{2}} (1- e^{- \left(\frac{q \nu\lambda_1}{4}-\widetilde{K}_L \frac{q(q-1)}{2} \right)t}),
\]
which proves statement (iii).
\end{itemize}
\end{proof}

Next we prove similar estimates for the solution of the data assimilation equation \eqref{data-assi-NS-equation-abs_mult}.
\begin{lemma}
\label{lem_mul_q_U}
Let Assumption \ref{assumption-stochastic1} be in force.
Let $U$ denote the solution to the Navier-Stokes equation \eqref{data-assi-NS-equation-abs_mult}. 
\begin{itemize}
    \item [(i)] If Assumption \ref{assumption-stochastic2}(B) is in force, then 
\begin{equation*}
\sup_{t\ge 0}\mathbb{E}\left[\|U(t)\|^q_H\right]  \le C_1(1+ \|u_0\|^q_H + \|U_0\|^q_H), \quad \text{for all} \ q\in [2, +\infty),
\end{equation*}
where $C_1$ is a positive constant depending on $q, \nu, K_B, \lambda_1, \|f\|_{V^*}$.
 \item [(ii)] If Assumption \ref{assumption-stochastic2}(S) is in force, then 
\begin{equation*}
\sup_{t\ge 0}\mathbb{E}\left[\|U(t)\|^q_H\right]  \le C_2(1+ \|u_0\|^q_H + \|U_0\|^q_H), \quad \text{for all} \ q \in [2, +\infty),
\end{equation*}
where $C_2$ is a positive constant depending on $q, \nu, K_S, \widetilde{K}_S, \gamma, \lambda_1, \|f\|_{V^*}$.
  \item [(iii)] If Assumption \ref{assumption-stochastic2}(L) is in force and $\nu > \frac{2\widetilde K_L}{\lambda_1}$, then 
\begin{equation*}
\sup_{t\ge 0}\mathbb{E}\left[\|U(t)\|^q_H\right]  \le C_3(1+ \|u_0\|^q_H + \|U_0\|^q_H), \quad \text{for all} \ q\in \left[2, 1+ \frac{\nu\lambda_1}{2\widetilde K_L}\right),
\end{equation*}
where $C_3$ is a positive constant depending on $q, \nu, K_L, \lambda_1, \|f\|_{V^*}$.
\end{itemize}
\end{lemma}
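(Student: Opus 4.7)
The plan is to mimic the proof of Lemma~A.4 by applying It\^o's formula to the functional $\|U(t)\|_H^q$, treating the additional nudging term $-\mu\Pi R_h(U-u)$ as a perturbation. Using $\langle B(U,U),U\rangle=0$ and the standard bound
$\mathrm{Tr}(D^2\|U\|_H^q\,G(U)G(U)^*)\le q(q-1)\|U\|_H^{q-2}\|G(U)\|_{L_{HS}(U,H)}^2$, I would obtain
\begin{align*}
{\rm d}\|U\|_H^q + q\nu\|U\|_H^{q-2}\|U\|_V^2\,{\rm d}t
&\le q\|U\|_H^{q-2}\langle f,U\rangle\,{\rm d}t - q\mu\|U\|_H^{q-2}(\Pi R_h(U-u),U)_H\,{\rm d}t \\
&\quad + \tfrac{q(q-1)}{2}\|U\|_H^{q-2}\|G(U)\|_{L_{HS}(U,H)}^2\,{\rm d}t + {\rm d}M(t),
\end{align*}
where $M$ is a local martingale. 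The forcing term and the $G(U)$ term are estimated exactly as in Lemma~A.4, splitting into the three cases (B), (S), (L). The only genuinely new work concerns the nudging term.

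For the nudging term, I would use the linearity of $R_h$ and the identity
$(\Pi R_h(U-u),U)_H = \|U\|_H^2 - (u,U)_H + (\Pi R_h(U-u)-(U-u),U)_H$, giving the decomposition
\[
-q\mu\|U\|_H^{q-2}(\Pi R_h(U-u),U)_H = -q\mu\|U\|_H^q + q\mu\|U\|_H^{q-2}(u,U)_H - q\mu\|U\|_H^{q-2}(\Pi R_h(U-u)-(U-u),U)_H.
\]
The first summand provides an extra coercive contribution $-q\mu\|U\|_H^q$. For the second, Young's inequality yields $q\mu\|U\|_H^{q-1}\|u\|_H\le \tfrac{q\mu}{4}\|U\|_H^q + C(q,\mu)\|u\|_H^q$, and the $\|u\|_H^q$ term is handled uniformly in time by Lemma~A.4. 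For the third, using \eqref{iden-data-approx} and the condition $\mu c_0 h^2\le \nu$ (so that $\mu c_0^{1/2}h\le \sqrt{\mu\nu}$), a Young step produces a contribution that can be split between the viscous dissipation $\tfrac{q\nu}{2}\|U\|_H^{q-2}\|U\|_V^2$ (absorbed on the left), the nudging coercivity $\tfrac{q\mu}{8}\|U\|_H^q$ (absorbed into the $-q\mu\|U\|_H^q$ term), and a leftover cross term $\tfrac{q\nu}{2}\|U\|_H^{q-2}\|u\|_V^2$.

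The main obstacle is precisely this leftover cross term $\mathbb{E}[\|U\|_H^{q-2}\|u\|_V^2]$, since I have pointwise-in-time bounds on $\mathbb{E}\|u\|_H^q$ (Lemma~A.4) but only time-averaged bounds on $\mathbb{E}\|u\|_V^2$ (Lemma~A.2), and no a priori $\|u\|_V^q$ moment. To circumvent this I would switch viewpoint at this step and estimate the difference $r:=U-u$ in $L^q$ instead, reproducing for $\|r\|_H^q$ the calculation behind Lemma~4.4: applying It\^o's formula to $\|r\|_H^q$ and using the same Gagliardo--Nirenberg and approximation bounds as in the $L^2$ case, one obtains
$
{\rm d}\|r\|_H^q + \tfrac{q\mu}{2}\|r\|_H^q\,{\rm d}t \le \bigl[\tfrac{q(q-1)L}{2} + \tfrac{q}{2\nu}\|u\|_V^2\bigr]\|r\|_H^q\,{\rm d}t + {\rm d}\widetilde M(t).
$
The weighted-exponential trick of Lemma~4.4, combined with the exponential tail estimates on $\int_0^t\|u(s)\|_V^2\,{\rm d}s$ (as used in Proposition~4.6 via Lemmata~A.3--A.5), then yields a uniform-in-time bound for $\mathbb{E}\|r(t)\|_H^q$ in terms of $\|U_0-u_0\|_H^q$ and structural parameters.

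Finally, combining this with Lemma~A.4 via the triangle inequality $\|U\|_H^q\le 2^{q-1}(\|r\|_H^q+\|u\|_H^q)$ gives the desired bound
$\sup_{t\ge 0}\mathbb{E}\|U(t)\|_H^q \le C_i(1+\|u_0\|_H^q+\|U_0\|_H^q)$.
The restriction on the range of $q$ under (L), namely $q<1+\tfrac{\nu\lambda_1}{2\widetilde{K}_L}$, is inherited from the analogous restriction in Lemma~A.4(iii), which bounds $\mathbb{E}\|u\|_H^q$ and is the binding constraint in the triangle-inequality step.
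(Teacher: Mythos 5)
Your opening steps (It\^o's formula for $\|U\|_H^q$, the antisymmetry of $B$, and the case-by-case treatment of the forcing and of $G(U)$ exactly as in Lemma \ref{lem_mul_q}) coincide with the paper's argument; the divergence is in the treatment of the nudging term, and this is where your proposal has a genuine gap. Your decomposition brings in, through the approximation property \eqref{iden-data-approx}, the cross term $\mathbb{E}\big[\|U\|_H^{q-2}\|u\|_V^2\big]$, which you rightly identify as out of reach of the available moment bounds; but the detour you propose to get around it is circular. The weighted Foias--Prodi computation for $r=U-u$ (as in Lemma \ref{data-estimate-2}) only yields control of $\mathbb{E}\big[e^{\Gamma(t\wedge\tau)}\|r(t\wedge\tau)\|_H^q\big]$, and to remove the exponential weight one must deal with the event $\{\tau_{R,\beta,\delta,\mu}<+\infty\}$, on which the weight degenerates. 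The only device available there (as in the proof of Theorem \ref{convergence_mult_case}) is H\"older's inequality, $\mathbb{E}\big[{\pmb 1}_{(\tau<\infty)}\|r(t)\|_H^q\big]\le \mathbb{P}(\tau<\infty)^{1/2}\big(\mathbb{E}\|r(t)\|_H^{2q}\big)^{1/2}$, which presupposes an \emph{unweighted} uniform moment bound on $r$ (equivalently on $U$) of higher order --- precisely the content of the lemma you are proving. The paper's logical order is the reverse of yours: Lemmata \ref{lem_mul_q} and \ref{lem_mul_q_U} are established first by direct energy arguments and only afterwards fed into the stopping-time machinery of Theorem \ref{convergence_mult_case}. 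Note also that under (S) and (L) the tail bounds of Lemmata \ref{lem_mul_3}--\ref{lem_mul_4} are only polynomial and Proposition \ref{tau_exp} needs lower bounds on $\mu$, whereas the lemma is stated with no such condition.

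The repair is the paper's own treatment of the nudging term, which never invokes \eqref{iden-data-approx}: estimate $-q\mu\|U\|_H^{q-2}\langle \Pi R_h(U-u),U\rangle \le q\mu\|U\|_H^{q-2}\big(-\|R_hU\|_H^2+\|u\|_H\|U\|_H\big)$, discard the nonpositive part, and apply Young's inequality twice to obtain $\varepsilon q\mu\|U\|_H^{q}+\delta(q\mu)^{\frac{q}{q-2}}\|U\|_H^{q}+C_{\delta,\varepsilon}\|u\|_H^{q}$. For $\varepsilon,\delta$ small the first two terms are absorbed into the coercive Poincar\'e term on the left-hand side, while $\mathbb{E}\|u(t)\|_H^q$ is bounded uniformly in time by Lemma \ref{lem_mul_q} itself; the Gronwall step then closes exactly as in Lemma \ref{lem_mul_q}, with no stopping times, no exponential weights, and no use of the interpolation error bound.
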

\begin{proof}
One argues as in the proof of \cite[Lemma A.3]{FZ23}. We sketch the proof for the case $q>2$.
\\
By applying the It\^o formula to $\|U\|^q_H$ one obtains an inequality which is of the form \eqref{eq_q_u} but with the addition, on the right hand side, of the term 
\[
-\mu \|U(t)\|_H^{q-2}\langle R_h(U(t)-u(t)),U(t)\rangle.
\]
Thanks to the Cauchy-Schwartz and the Young inequalities, for any $\delta, \varepsilon>0$ one can estimate 
\begin{align*}
-\mu q\|U\|^{q-2}_H \langle R_h(U-u,U\rangle 
&\le \mu q\|U\|^{q-2}_H \left(- \|R_hU\|^2_H + \|u\|_H\|U\|_H\right) 
\\
&\le \mu q\|U\|^{q-2}_H \left( \frac{1}{\varepsilon}\|u\|_H^2+\varepsilon \|U\|^2_H\right) 
\\
&\le \varepsilon \mu q\|U\|^{q}_H + \left(\mu q\right)^{\frac{q}{q-2}}\delta\|U\|^q_H + \frac{1}{\delta \varepsilon^{\frac q2}}\|u\|^q_H
\end{align*}
By choosing $\varepsilon$ and $\delta$ suitable small, the first two terms can be reabsorbed on the left hand side of the inequality for $\|U\|^q_H$, thus concluding the estimates as in the proof of Lemma \ref{lem_mul_q}; whereas, for the last term involving $u$, one uses the estimates of Lemma \ref{lem_mul_q} itself. This concludes the proof.
\end{proof}

\subsection{Estimates in probability}

\begin{lemma}
\label{lem_mul_2}
Let Assumptions \ref{assumption-stochastic1} and \ref{assumption-stochastic2}(B) be in force.
Let $u$ denote the solution to the Navier-Stokes equation \eqref{NS_abs_mult}. Then
\begin{equation*}
\mathbb{P}\left( \sup_{t\ge T}\left[ \|u(t)\|^2_H + \frac{\nu}{2} \int_0^t \|u(s)\|^2_V \, {\rm d}s -\|u_0\|^2_H -\left(K_B+ \frac{\|f\|^2_{V^*}}{\nu}\right)t\right] \ge R\right) \le e^{-\frac{\nu\lambda_1}{8K_B}R},
\end{equation*}
for all $T\ge0, R>0$.
\end{lemma}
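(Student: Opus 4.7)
The plan is to start from It\^o's formula applied to $\|u(t)\|^2_H$ along the solution of \eqref{NS_abs_mult}. Using the antisymmetry relation \eqref{B}, the bound $\|G(u)\|^2_{L_{HS}(U,H)}\le K_B$ from Assumption \ref{assumption-stochastic2}(B), and the Young inequality \eqref{stima_F} with $\eta=1$ (so that $2\langle u,f\rangle \le \nu\|u\|^2_V + \tfrac{1}{\nu}\|f\|^2_{V^*}$), one obtains pathwise, for every $t\ge 0$,
\[
\|u(t)\|^2_H + \nu\int_0^t \|u(s)\|^2_V\,{\rm d}s \le \|u_0\|^2_H + \Bigl(K_B+\tfrac{\|f\|^2_{V^*}}{\nu}\Bigr)t + 2M(t),
\]
where $M(t):=\int_0^t \langle u(s), G(u(s))\,{\rm d}W(s)\rangle$ is a continuous local martingale. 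Rearranging, the event in the statement is contained in
\[
\Bigl\{\sup_{t\ge T}\bigl[2M(t)-\tfrac{\nu}{2}\textstyle\int_0^t \|u(s)\|^2_V\,{\rm d}s\bigr]\ge R\Bigr\}\subseteq \Bigl\{\sup_{t\ge 0}\bigl[M(t)-\tfrac{\nu}{4}\textstyle\int_0^t \|u(s)\|^2_V\,{\rm d}s\bigr]\ge R/2\Bigr\},
\]
so it suffices to control the latter.

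The next step is to use the exponential supermartingale associated to $M$. Bounding $\|G^*(u)u\|^2_U \le \|G(u)\|^2_{L_{HS}(U,H)}\|u\|^2_H\le K_B\|u\|^2_H \le \tfrac{K_B}{\lambda_1}\|u\|^2_V$ by Poincar\'e, the quadratic variation $\langle M\rangle(t)=\int_0^t \|G^*(u(s))u(s)\|^2_U\,{\rm d}s$ satisfies $\langle M\rangle(t)\le \tfrac{K_B}{\lambda_1}\int_0^t \|u(s)\|^2_V\,{\rm d}s$. For any $\alpha>0$, the process
\[
Z_\alpha(t):=\exp\Bigl(2\alpha M(t)-2\alpha^2\textstyle\int_0^t \|G^*(u(s))u(s)\|^2_U\,{\rm d}s\Bigr)
\]
is a nonnegative continuous local martingale with $Z_\alpha(0)=1$, hence a nonnegative supermartingale. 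Choosing $\alpha=\tfrac{\nu\lambda_1}{8K_B}$, one computes $2\alpha^2\cdot \tfrac{K_B}{\lambda_1}=2\alpha\cdot\tfrac{\nu}{8}$, so that
\[
Z_\alpha(t)\ge \exp\Bigl(2\alpha\bigl[M(t)-\tfrac{\nu}{8}\textstyle\int_0^t \|u(s)\|^2_V\,{\rm d}s\bigr]\Bigr).
\]

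The conclusion follows by a standard Doob-type argument. Indeed, on the reduced event we have $M(t)-\tfrac{\nu}{4}\int_0^t \|u\|^2_V\,{\rm d}s\ge R/2$, and since $\tfrac{\nu}{8}\int_0^t \|u\|^2_V\,{\rm d}s\ge 0$, also $M(t)-\tfrac{\nu}{8}\int_0^t \|u\|^2_V\,{\rm d}s\ge R/2$; hence $Z_\alpha(t)\ge e^{\alpha R}=e^{\tfrac{\nu\lambda_1}{8K_B}R}$. The maximal inequality for nonnegative supermartingales then yields
\[
\mathbb{P}\Bigl(\sup_{t\ge 0}Z_\alpha(t)\ge e^{\alpha R}\Bigr)\le e^{-\alpha R}=e^{-\tfrac{\nu\lambda_1}{8K_B}R},
\]
which is the claimed bound. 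The only nontrivial technicality is that $M$ is a priori only a local martingale: one introduces a reducing sequence of stopping times $\tau_n\uparrow+\infty$, applies Doob's inequality to the stopped process, and then passes to the limit by Fatou's lemma. This is the main, though fairly routine, obstacle; the rest of the argument is the careful calibration of the constant $\alpha$ so that the exponent matches the one claimed in the statement.
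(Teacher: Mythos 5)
Your proof is correct and follows essentially the same route as the paper: It\^o's formula for $\|u\|_H^2$ with the bound (B) and Young's inequality, reduction of the event to a bound on $M(t)$ minus a multiple of $\int_0^t\|u(s)\|_V^2\,{\rm d}s$, control of the quadratic variation via $K_B$ and Poincar\'e, and the exponential martingale inequality with the same constant $\tfrac{\nu\lambda_1}{8K_B}$. The only difference is cosmetic: the paper cites the exponential martingale inequality directly, whereas you re-derive it through the exponential supermartingale $Z_\alpha$ and Doob's maximal inequality, which is a valid self-contained substitute.
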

\begin{proof}
 Let us start from the estimate 
\begin{equation*}
\|u(t)\|^2_H +  \nu\int_0^t \|u(s)\|^2_V\, {\rm d}s  \le \|u_0\|_H^2+\left( K_B+ \frac{1}{\nu} \|f\|^2_{V^*}\right)t 
 +2 \int_0^t\langle u(s), G(u(s))\, {\rm d}W(s)\rangle,
\end{equation*}
which holds $\mathbb{P}$-a.s., for all $t\ge0$.
Set 
\begin{equation}\label{la-mart}
M(t):= 2 \int_0^t\langle u(s), G(u(s))\, {\rm d}W(s)\rangle, \quad t \ge 0.
\end{equation}
This is a martingale process with quadratic variation
\begin{equation}
%\label{qvM}
[M](t) \le %\int_0^t\|u(s)\|^2_H \|G(u(s))\|^2_{L_{HS}(U,H)}\le 
4K_B\int_0^t \|u(s)\|_H^2\,{\rm d}s
\underset{\text{by  }\eqref{Poincarè}} {\le}
\frac{4K_B}{\lambda_1} \int_0^t \|u(s)\|_V^2\,{\rm d}s, \qquad t \ge 0.
\end{equation}
Hence
\begin{align*}
\|u(t)\|^2_H +\frac{\nu}{2}\int_0^t \|u(s)\|^2_V\, {\rm d}s  -  \|u_0\|_H^2- \left(K_B+ \frac{1}{\nu\lambda_1} \|f\|^2_H \right) t 
\le 
&M(t) -\frac{\nu}{2}\int_0^t \|u(s)\|^2_V\, {\rm d}s
\\
\le 
&M(t) - \frac{\nu\lambda_1}{8K_B}[M](t).
\end{align*}
We now recall the 
the exponential martingale inequality (see e.g. \cite[Chapter 6, Section 8]{Kry} and \cite[Proposition 3.1]{GHnotes})
\begin{equation*}
\mathbb{P}\left( \sup_{t \ge 0} \left[M(t)-\gamma [M](t)\right] \ge R\right) \le e^{-\gamma R},  
\end{equation*}
which holds for any $R$ and $\gamma>0$. Taking $\gamma= \frac{\nu\lambda_1}{8K_B}$, 
from the previous estimates the thesis follows.
\end{proof}

\begin{lemma}
\label{lem_mul_3}
Let Assumptions \ref{assumption-stochastic1} and \ref{assumption-stochastic2}(S) be  in force.
Let $u$ denote the solution to the Navier-Stokes equation \eqref{NS_abs_mult}. Then, 
for an arbitrary $q \in (2, + \infty)$, it holds
\begin{equation*}
\mathbb{P}\left( \sup_{t\ge T}\left[ \|u(t)\|^2_H + \nu \int_0^t \|u(s)\|^2_V \, {\rm d}s -\|u_0\|^2_H -C_S\left(t+1\right)\right] \ge R\right) \le \frac{C(1+ \|u_0\|_H^{2q})}{(T+R)^{\frac q2 -1}},
\end{equation*}
for all $T\ge0, R>0$. Here $C_S$ is the constant defined  in \eqref{C_S} and $C$ is a positive constant depending on $K_S, \tilde K_S, \nu, \lambda_1, q, \gamma, \|f\|_{V^*}$, independent of $T,R$ and $u_0$.
\end{lemma}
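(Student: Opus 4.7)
My plan is to adapt the proof of Lemma \ref{lem_mul_2} to the sublinear noise case, replacing the exponential martingale inequality (which in the bounded case worked because the quadratic variation of $M$ was controlled by $\int_0^t\|u(s)\|_V^2\,{\rm d}s$) by a combination of Chebyshev, Doob and Burkholder--Davis--Gundy (BDG) inequalities, together with the moment bounds of Lemma \ref{lem_mul_q}(ii).

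First I would apply It\^o's formula to $\|u(t)\|^2_H$, use \eqref{B} to eliminate the bilinear term, and estimate the covariance and forcing contributions through Assumption \ref{assumption-stochastic2}(S), Poincar\'e and Young's inequalities, exactly as in the proof of Lemma \ref{lem_mul_2_0}(ii). This yields, $\mathbb{P}$-a.s., for every $t \ge 0$,
\[
\|u(t)\|^2_H + \nu \int_0^t \|u(s)\|^2_V\, {\rm d}s \le \|u_0\|^2_H + \left(\tfrac{C_S}{2}-1\right) t + M(t),
\]
where $M$ is the martingale \eqref{la-mart}. Setting $a := \tfrac{C_S}{2}+1$, so that $C_S - (\tfrac{C_S}{2}-1) = a$, the event in the statement is contained in $\bigl\{\sup_{t\ge T}(M(t) - at) \ge R + C_S\bigr\}$.

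Next I would split $[T,+\infty) = \bigcup_{n\ge 0}[T+n, T+n+1]$ and apply a union bound. Since $at \ge a(T+n)$ on each subinterval,
\[
\mathbb{P}\Bigl(\sup_{t\ge T}(M(t)-at) \ge R+C_S\Bigr) \le \sum_{n\ge 0}\mathbb{P}\Bigl(\sup_{t\le T+n+1}|M(t)| \ge R + C_S + a(T+n)\Bigr).
\]
On each term I would apply Chebyshev, Doob's maximal inequality for the submartingale $|M|^q$, and the BDG inequality, to bound the right-hand side by $c_q\,\mathbb{E}\bigl[[M](T+n+1)^{q/2}\bigr]/(R+C_S+a(T+n))^{q}$. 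The quadratic variation satisfies
\[
{\rm d}[M](s) \le 4\|G(u(s))\|^2_{L_{HS}(U,H)}\|u(s)\|^2_H\, {\rm d}s \le C\bigl(\|u(s)\|^2_H + \|u(s)\|^{2+2\gamma}_H\bigr)\,{\rm d}s
\]
by Assumption \ref{assumption-stochastic2}(S), so Jensen's inequality together with Lemma \ref{lem_mul_q}(ii), applied with the exponents $q$ and $q(1+\gamma)$ (both at most $2q$ since $\gamma<1$, after a further application of Young), gives
\[
\mathbb{E}\bigl[[M](T+n+1)^{q/2}\bigr] \le C\,(T+n+1)^{q/2}\bigl(1+\|u_0\|^{2q}_H\bigr).
\]

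The last step, which I expect to be the main technical obstacle, is to sum the series $\sum_{n\ge 0}(T+n+1)^{q/2}(R+a(T+n))^{-q}$. I would compare it with the integral $\int_T^{+\infty} y^{q/2}(R+ay)^{-q}\,{\rm d}y$ and estimate it separately in the regimes $T\ge R/a$ and $T < R/a$ (splitting the integral at $y=R/a$ in the latter case); in both regimes a direct computation using $q>2$ shows that the integral is bounded by $C(T+R)^{1-q/2}$, which produces the stated decay $(T+R)^{-(q/2-1)}$. The two-regime analysis is what is needed to obtain the joint dependence $(T+R)^{q/2-1}$ rather than the weaker $T^{q/2-1}$ (or $R^{q/2-1}$) that would come from a naive single-regime bound, and it is also the step in which the precise power $2q$ in the prefactor $1+\|u_0\|^{2q}_H$ becomes fixed.
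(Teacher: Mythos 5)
Your argument is correct and follows essentially the same route as the paper: the It\^o estimate reducing matters to $\sup_{t\ge T}\bigl(M(t)-\text{linear drift}\bigr)$, then unit-interval splitting with Chebyshev/Doob/BDG together with the moment bounds of Lemma \ref{lem_mul_q}(ii) (which is precisely what the paper's appeal to \cite{DPSZ25}, Lemma B.7 encapsulates), and finally summation of the resulting series. The only cosmetic difference is the last step: the paper avoids your two-regime integral comparison by simply bounding $(m+2)^{q/2}/(R+m+2)^{q}\le (R+m+2)^{-q/2}$ and summing, which already yields the $(T+R)^{1-\frac q2}$ decay.
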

\begin{proof}
We start with the estimate 
\begin{equation}
\label{mar_t}
\|u(t)\|^2_H + \nu\int_0^t \|u(s)\|^2_V\, {\rm d}s -  \|u_0\|_H^2 
 - \left(K_S + 
 (1-\gamma)\left(\frac{2\gamma}{\lambda_1 \nu}\right)^{\frac{\gamma}{1-\gamma}} \widetilde{K_S}^{\frac{1}{1-\gamma}}
 +
\frac{2}{\nu}\|f\|^2_{V^*} \right)t  \le M(t),
\end{equation}
which hold $\mathbb{P}$-a.s. for any $t\ge0$, where  $M$ is the martingale defined in \eqref{la-mart}.
We estimate its  quadratic variation
\begin{align}
\label{qua_var_S}
[M](t)
&\le 4  \int_0^t \|u(s)\|^2_H\|G(u(s))\|^2_{L_{HS}(U,H)}\, {\rm d}s
\le C_{K_S, \widetilde{K}_S, \gamma}\int_0^t \left(1+ \|u(s)\|^4_H \right)\, {\rm d}s,
\end{align}
where $C_{K_S, \widetilde{K}_S, \gamma}$ is a positive constant.
We now subtract $t+2$ to both sides of \eqref{mar_t}, appealing to \cite[Lemma B.7]{DPSZ25}, bearing in mind \eqref{C_S},
\begin{footnote}{We point out that in our previous work (see [19, Proposition A.4]) the definition of $C_S$	
  featured a minimum. This was a typographical error: one should either use a maximum or, as in the present setting, introduce a multiplicative factor $2$. The same remark applies to the constant $C_L$. We also remark that the suitable bound we employ depends on the type of estimate we aim to obtain.}\end{footnote}
we obtain, for any $T \ge 0$,
\begin{align*}
    \mathbb{P}\left(\sup_{t \ge T}\left(\|u(t)\|^2_H + \nu\int_0^t \|u(s)\|^2_V\, {\rm d}s -  \|u_0\|_H^2 
 - C_S(t+1) \right)>R \right) 
 &\le  \mathbb{P}\left(\sup_{t\ge T}\left( M(t)-t-2\right) \right) 
 \\
 &\le c_q  \sum_{m \ge  \lfloor T\rfloor}\frac{\mathbb{E}\left[[M](m+1)^{\frac q2}\right]} {(R+m+2)^q}.
\end{align*}
Estimate \eqref{qua_var_S} and Lemma \ref{lem_mul_q}(ii) yield
\begin{align}
\label{est_quadr_var}
\mathbb{E}\left[ [M](t)^{\frac q2}\right]
&\lesssim_{q, K_S, \widetilde {K}_S, \gamma} \mathbb{E}\left[\left(\int_0^t  \left(1+\|u(s)\|^{4}_H\right)\, {\rm d}s\right)^{\frac q2}\right]
\\\notag
&\lesssim_{q, K_S, \widetilde {K}_S, \gamma}t^{\frac{q-2}{2}} \mathbb{E}\left[\int_0^t  \left(1+\|u(s)\|^{2q}_H\right)\, {\rm d}s\right]
\le C(t+1)^{\frac{q}{2}}\left(1+\|u_0\|^{2q}_H \right),
\end{align} 
where $C=C(K_S, \tilde K_S, \nu, \lambda_1, q, \gamma, \|f\|_{V^*})$ is a positive constant. Thus, we have 
\begin{align*}
\mathbb{P}\left(\sup_{t \ge T}\left[ M(t)-t-2\right] \ge R \right) 
&\le \sum_{m \ge  \lfloor T\rfloor}\frac{\mathbb{E}\left[[M](m+1)^{\frac q2}\right]} {(R+m+2)^q}
\\
&\le C(1+ \|u_0\|_H^{2q})\sum_{m \ge  \lfloor T\rfloor}\frac{(m+2)^{\frac q2}} {(R+m+2)^q}
\le C(1+ \|u_0\|_H^{2q})\sum_{m \ge  \lfloor T\rfloor}\frac{1} {(R+m+2)^{\frac q2}}.
\end{align*}
The latter series is convergent when $q > 2$ and thus we obtain
\[
\mathbb{P}\left(\sup_{t \ge T}\left[M(t)-t-2\right] \ge R \right)
\lesssim
  \frac{C(1+ \|u_0\|_H^{2q})}{(T+R)^{\frac q2-1}}
\]
and the thesis follows.
\end{proof}

\begin{lemma}
\label{lem_mul_4}
Let Assumptions \ref{assumption-stochastic1} and \ref{assumption-stochastic2}(L) be in force. Let $\nu > \frac{2\widetilde K_L}{\lambda_1}$.
Let $u$ denote the solution to the Navier-Stokes equation \eqref{NS_abs_mult}. Then, 
for an arbitrary $q\in \left(2, 1+ \frac{\nu\lambda_1}{2\widetilde K_L}\right)$, it holds
\begin{equation*}
\mathbb{P}\left( \sup_{t\ge T}\left[ \|u(t)\|^2_H + \left(\nu-\frac{\widetilde{K}_L}{\lambda_1}\right) \int_0^t \|u(s)\|^2_V \, {\rm d}s -\|u_0\|^2_H -C_L\left(t+1\right)\right] \ge R\right) \le \frac{C(1+ \|u_0\|_H^{2q})}{(T+R)^{\frac q2 -1}},
\end{equation*}
for all $T\ge0, R>0$. Here $C_L$ is the constant defined  in \eqref{C_L} and $C$ is a positive constant depending on $K_L,  \nu, \lambda_1, q, \gamma, \|f\|_{V^*}$, independent of $T,R$ and $u_0$.
\end{lemma}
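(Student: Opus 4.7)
The plan is to mirror the proof of Lemma~\ref{lem_mul_3}, replacing the sublinear control of the noise by the linear one and carefully tracking the admissible range of exponents. First, I would apply It\^o's formula to $\|u(t)\|_H^2$ (as in Lemma~\ref{lem_mul_2_0}), use the antisymmetry property~\eqref{B}, the Poincar\'e-improved linear bound
\[
\|G(u)\|_{L_{HS}(U,H)}^2 \le K_L + \widetilde K_L \|u\|_H^2 \le K_L + \frac{\widetilde K_L}{\lambda_1}\|u\|_V^2,
\]
and the Young inequality $2\langle u,f\rangle \le \eta\nu\|u\|_V^2 + (\eta\nu)^{-1}\|f\|_{V^*}^2$ with $\eta=1$. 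This produces the pathwise bound
\[
\|u(t)\|_H^2 + \left(\nu - \tfrac{\widetilde K_L}{\lambda_1}\right)\int_0^t \|u(s)\|_V^2\,{\rm d}s \le \|u_0\|_H^2 + C_L\,t + M(t),
\]
with $M$ the martingale from~\eqref{la-mart}. Consequently, the event under consideration is contained in $\{\sup_{t\ge T}[M(t)-t-2]\ge R\}$ after absorbing constants into the $+1$ shift, exactly as in Lemma~\ref{lem_mul_3}.

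Second, I would estimate the quadratic variation of $M$ using only $\|G(u)\|_{L_{HS}}^2 \le C(1+\|u\|_H^2)$, which yields
\[
[M](t) \le 4 \int_0^t \|u(s)\|_H^2\,\|G(u(s))\|_{L_{HS}(U,H)}^2\,{\rm d}s \le C_{K_L,\widetilde K_L}\int_0^t (1+\|u(s)\|_H^4)\,{\rm d}s,
\]
formally the same structural bound as in~\eqref{qua_var_S}. Then, by Jensen's inequality,
\[
\mathbb{E}[[M](t)^{q/2}] \lesssim t^{(q-2)/2}\,\mathbb{E}\left[\int_0^t \left(1+\|u(s)\|_H^{2q}\right)\,{\rm d}s\right],
\]
and the uniform-in-time moment estimate from Lemma~\ref{lem_mul_q}(iii) gives $\mathbb{E}[[M](t)^{q/2}] \le C(t+1)^{q/2}(1+\|u_0\|_H^{2q})$ provided the exponent $2q$ lies in the admissible interval of Lemma~\ref{lem_mul_q}(iii), a requirement that is the source of the $q$-restriction in the statement.

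Third, I would invoke the discrete maximal inequality \cite[Lemma~B.7]{DPSZ25} exactly as in Lemma~\ref{lem_mul_3}, obtaining
\[
\mathbb{P}\left(\sup_{t\ge T}[M(t)-t-2]\ge R\right) \lesssim (1+\|u_0\|_H^{2q}) \sum_{m\ge \lfloor T\rfloor} \frac{(m+2)^{q/2}}{(R+m+2)^q} \lesssim \frac{1+\|u_0\|_H^{2q}}{(T+R)^{q/2-1}},
\]
where the series converges because $q>2$.

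The main obstacle is the limited range of exponents. In contrast with the sublinear case, the linear growth of $G$ forces the dissipation coefficient to shrink from $\nu$ to $\nu - \widetilde K_L/\lambda_1$, and the moment estimates of Lemma~\ref{lem_mul_q}(iii) are valid only for powers below an explicit threshold depending on $\nu$, $\lambda_1$ and $\widetilde K_L$. The hypothesis $\nu > 2\widetilde K_L/\lambda_1$ is precisely what guarantees that this interval still contains exponents larger than $2$, which in turn is needed both for the Jensen step and for the convergence of the last series, thereby closing the argument.
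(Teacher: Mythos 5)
Your route is the one the paper intends: the paper omits the proof of Lemma \ref{lem_mul_4}, saying only that it repeats the proof of Lemma \ref{lem_mul_3} with Lemma \ref{lem_mul_q}(iii) in place of Lemma \ref{lem_mul_q}(ii), and that is exactly your plan (same pathwise energy inequality with dissipation $\nu-\widetilde K_L/\lambda_1$, same martingale $M$, same quadratic-variation bound, same use of \cite[Lemma B.7]{DPSZ25} and the same series estimate). The reduction of the event to $\{\sup_{t\ge T}[M(t)-t-2]\ge R\}$ via $C_L=2\left(1+K_L+\tfrac1\nu\|f\|^2_{V^*}\right)$ and the final summation are fine.

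The gap is in the exponent bookkeeping versus the range of $q$ you are supposed to prove. Since $[M](t)\lesssim\int_0^t\left(1+\|u(s)\|_H^4\right){\rm d}s$, the H\"older/Jensen step forces you to control $\mathbb{E}\left[\|u(s)\|_H^{2q}\right]$, so, as you yourself write, Lemma \ref{lem_mul_q}(iii) must be applied at exponent $2q$, which requires $2q<1+\tfrac{\nu\lambda_1}{2\widetilde K_L}$, i.e. $q<\tfrac12+\tfrac{\nu\lambda_1}{4\widetilde K_L}$. This interval is strictly smaller than the claimed range $\left(2,\,1+\tfrac{\nu\lambda_1}{2\widetilde K_L}\right)$, and it contains exponents $q>2$ only if $\nu>\tfrac{6\widetilde K_L}{\lambda_1}$, not merely $\nu>\tfrac{2\widetilde K_L}{\lambda_1}$. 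Hence your closing paragraph is internally inconsistent: it cannot both be true that the statement's restriction on $q$ comes from ``$2q$ admissible in Lemma \ref{lem_mul_q}(iii)'' and that $\nu>\tfrac{2\widetilde K_L}{\lambda_1}$ keeps that restricted interval nonempty. (The paper's one-line remark, citing the conditions $\nu>\tfrac{2\widetilde K_L}{\lambda_1}$ and $q<1+\tfrac{\nu\lambda_1}{2\widetilde K_L}$ as what is needed to invoke Lemma \ref{lem_mul_q}(iii), reads as if that lemma were applied at exponent $q$; with the quadratic-variation bound used here and in Lemma \ref{lem_mul_3}, the exponent is $2q$, so the tension is latent in the paper as well.) To close your argument you must either state the result for $q\in\left(2,\tfrac12+\tfrac{\nu\lambda_1}{4\widetilde K_L}\right)$ under $\nu>\tfrac{6\widetilde K_L}{\lambda_1}$ (which then shrinks the admissible $p$-range propagated to Proposition \ref{tau_exp}(iii) and Theorem \ref{convergence_mult_case}(iii)), or replace the bound $\mathbb{E}\left[[M](t)^{q/2}\right]\lesssim t^{q/2-1}\int_0^t\left(1+\mathbb{E}\|u(s)\|_H^{2q}\right){\rm d}s$ by an estimate of the quadratic variation that only consumes $q$-th moments of $u$; as written, neither is done.
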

\begin{proof}
    We skip the proof since it is similar to the proof of Lemma \ref{lem_mul_3}. Notice that the conditions $\nu >\frac{2\widetilde{K}_L}{\lambda1}$ and $q<1+ \frac{\nu\lambda_1}{2\widetilde{K}_L}$ are required in order to appeal to Lemma \ref{lem_mul_q}(iii).
\end{proof}

\section{Apriori estimates: the additive noise case}
\label{sec:appB}

Here we collect the basic results in the additive noise case. No proof is given, since these results are particular cases of those obtained with a multiplicative bounded noise in the previous sections, considering $K_B=\|G\|^2_{L_{HS}(U;H)}$ and $L=0$.
\subsection{Estimates in expected value}
\begin{lemma}
\label{lem_add_1}
Assume $G\in L_{HS}(U;H)$ and  $u$ denote the solution to the Navier-Stokes equation \eqref{2D-NS-additive-abs}.
 Then, for every $t \ge 0$
\begin{equation}
\label{E_est_add}
\mathbb{E}[\|u(t)\|^2_H]+ \nu\mathbb{E}\int_0^t \|u(s)\|^2_V\, {\rm d}s  \le  \|u_0\|_H^2+\left( \|G\|^2_{L_{HS}(U,H)}+ \frac{1}{\nu} \|f\|^2_{V^*}\right)t
\end{equation}
and 
\begin{equation}
\label{E_est_add2}
\mathbb{E}\left[\|u(t)\|^2_H\right]  \le  \|u_0\|_H^2 e^{-\nu \lambda_1t}+\frac{1}{\nu \lambda_1}\left( \|G\|^2_{L_{HS}(U,H)}+ \frac{1}{\nu} \|f\|^2_{V^*}\right).
\end{equation}
\end{lemma}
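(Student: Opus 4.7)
The plan is to establish both estimates via a standard energy argument, applying It\^o's formula to the functional $\|\cdot\|^2_H$ along the solution $u$ of \eqref{2D-NS-additive-abs}. Since the noise is additive, the quadratic variation contribution is simply $\|G\|^2_{L_{HS}(U,H)}\,{\rm d}t$, independent of $u$. Using the antisymmetry identity $\langle B(u,u),u\rangle =0$ from \eqref{B}, the nonlinear term is eliminated and one obtains $\mathbb P$-a.s., for every $t\ge 0$,
\[
\|u(t)\|^2_H + 2\nu\int_0^t \|u(s)\|^2_V\,{\rm d}s
= \|u_0\|^2_H + \|G\|^2_{L_{HS}(U,H)}\, t + 2\int_0^t \langle f, u(s)\rangle\,{\rm d}s + 2\int_0^t \langle u(s), G\,{\rm d}W(s)\rangle.
\]

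Second, I would handle the deterministic forcing term via duality and Young's inequality: $2\langle f,u\rangle \le \nu\|u\|^2_V + \tfrac{1}{\nu}\|f\|^2_{V^*}$. Taking expectation on both sides (the stochastic integral is a genuine martingale because $G$ has finite Hilbert--Schmidt norm and $u\in L^2_{\rm loc}(0,+\infty;V)$, which is what the energy identity itself provides after a standard localisation by stopping times and Fatou's lemma) and absorbing $\nu\|u\|^2_V$ on the left, I get
\[
\mathbb E\|u(t)\|^2_H + \nu\int_0^t \mathbb E\|u(s)\|^2_V\,{\rm d}s \le \|u_0\|^2_H + \Bigl(\|G\|^2_{L_{HS}(U,H)} + \tfrac{1}{\nu}\|f\|^2_{V^*}\Bigr)\, t,
\]
which is exactly \eqref{E_est_add}.

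For \eqref{E_est_add2}, the idea is to use the Poincar\'e inequality \eqref{Poincarè} to replace $\nu\|u\|^2_V$ with $\nu\lambda_1\|u\|^2_H$ in the differential form of the energy estimate, yielding
\[
\frac{\rm d}{{\rm d}t}\mathbb E\|u(t)\|^2_H + \nu\lambda_1\,\mathbb E\|u(t)\|^2_H \le \|G\|^2_{L_{HS}(U,H)} + \tfrac{1}{\nu}\|f\|^2_{V^*},
\]
and then apply Gronwall's lemma in its linear integrating-factor form to conclude.

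There is essentially no obstacle here: everything is standard It\^o calculus together with Gronwall's inequality. As the paper already observes, one may alternatively deduce the statement at once by specialising Lemma \ref{lem_mul_2_0}(i) and Lemma \ref{lem_mul_2_new}(i) to the choice $L=0$ and $K_B = \|G\|^2_{L_{HS}(U,H)}$. The only mildly delicate point is justifying the vanishing of the expectation of the stochastic integral, which is handled by the usual localisation argument exploiting the a priori control on $\int_0^t\|u(s)\|^2_V\,{\rm d}s$ that is built into the energy identity itself.
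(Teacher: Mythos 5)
Your proof is correct and follows essentially the same route as the paper: the paper omits a proof of this lemma precisely because it is the special case $L=0$, $K_B=\|G\|^2_{L_{HS}(U,H)}$ of Lemmas \ref{lem_mul_2_0}(i) and \ref{lem_mul_2_new}(i), whose proofs consist of exactly the It\^o--Young--Poincar\'e--Gronwall computation you carry out directly (with the same choice $\eta=1$ in the Young step and the same integrating-factor Gronwall argument yielding \eqref{E_est_add2}). Your handling of the stochastic integral by localisation matches the paper's treatment as well, so there is nothing to add.
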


\begin{lemma}
\label{q_add_u_U}
Assume $G\in L_{HS}(U;H)$. 
\begin{itemize}
    \item [(i)]Let $u$ denote the solution to  equation \eqref{2D-NS-additive-abs}. Then for all $q\in [2, +\infty)$,
\begin{equation*}
\mathbb{E}\left[\|u(t)\|^q_H\right]  \le \|u_0\|^q_H + C,
\end{equation*}
where $C$ is a positive constant depending on $q,\nu$,$\lambda_1$, $\|f\|_{V^*}$ and $\|G\|^2_{L_{HS}(U,H)}$.
\item [(ii)]
Let $U$ denote the solution to  equation \eqref{data-assi-NS-equation-abs}. Then for all $q\in [2, +\infty)$,
\begin{equation*}
\mathbb{E}\left[\|U(t)\|^q_H\right]  \le C(1+ \|u_0\|^q_H + \|U_0\|^q_H), 
\end{equation*}
where $C$ is a positive constant depending on $q,\nu$, $\lambda_1$, $\|f\|_{V^*}$ and $\|G\|^2_{L_{HS}(U,H)}$.
\end{itemize}
\end{lemma}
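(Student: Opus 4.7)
The plan is to observe that the additive equation \eqref{2D-NS-additive-abs} is a special instance of the multiplicative equation \eqref{NS_abs_mult} in which the diffusion coefficient is constant in the state. Defining $\widetilde G(v) := G$ for every $v \in H$, this map trivially satisfies Assumption \ref{assumption-stochastic1} with Lipschitz constant $L=0$ and Assumption \ref{assumption-stochastic2}(B) with $K_B = \|G\|^2_{L_{HS}(U,H)}$. Under this identification, the data assimilation equation \eqref{data-assi-NS-equation-abs} coincides with \eqref{data-assi-NS-equation-abs_mult}. Hence the whole lemma will be obtained by direct transcription of the corresponding multiplicative-noise statements (Lemma \ref{lem_mul_q} and Lemma \ref{lem_mul_q_U}), the $q=2$ case for $u$ being already contained in Lemma \ref{lem_add_1}.

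For part (i), I would first dispatch $q=2$ by invoking the uniform-in-time bound \eqref{E_est_add2}, which already has the form $\|u_0\|_H^2 + C$ with $C$ depending only on $\nu, \lambda_1, \|G\|^2_{L_{HS}(U,H)}, \|f\|_{V^*}$. For $q>2$ I would apply Lemma \ref{lem_mul_q}(i): substituting $K_B = \|G\|^2_{L_{HS}(U,H)}$ into the constant $C_1$ produces a bound of the shape $\|u_0\|_H^q + C$, with $C$ depending only on $q,\nu,\lambda_1, \|G\|^2_{L_{HS}(U,H)}, \|f\|_{V^*}$, as claimed.

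For part (ii), I would invoke Lemma \ref{lem_mul_q_U}(i), whose statement already covers the entire range $q\in [2,+\infty)$ and yields the desired bound $C(1+\|u_0\|_H^q+\|U_0\|_H^q)$ directly. The internal argument there relies on the It\^o formula applied to $\|U(t)\|_H^q$ together with the absorption of the extra nudging contribution $-\mu q\|U\|_H^{q-2}\langle R_h(U-u),U\rangle$ into the dissipative term via Cauchy--Schwarz and Young inequalities, the residual $\|u\|_H^q$-term being handled by part (i). Since the constants produced involve only $q,\nu,\lambda_1,K_B,\|f\|_{V^*}$, the reduction $K_B=\|G\|^2_{L_{HS}(U,H)}$ delivers the stated dependence.

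The only point requiring a brief verification is that the reduction preserves the stated parameter dependence; this is essentially automatic since $L=0$ does not enter the final constants of Lemma \ref{lem_mul_q}(i) or Lemma \ref{lem_mul_q_U}(i), and, unlike the linear-growth case, there is no upper restriction on the admissible exponent $q$. Consequently no real obstacle arises, and the lemma follows.
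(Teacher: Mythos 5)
Your proposal is correct and coincides with the paper's own argument: the paper gives no separate proof in Appendix~\ref{sec:appB}, stating only that these bounds are the particular case of the bounded multiplicative results with $K_B=\|G\|^2_{L_{HS}(U,H)}$ and $L=0$, which is exactly your reduction via Lemmata~\ref{lem_mul_q} and \ref{lem_mul_q_U} (with the $q=2$ case for $u$ covered by Lemma~\ref{lem_add_1}).
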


\subsection{Estimates in probability}
\begin{lemma}
\label{lem_add_2}
Assume $G\in L_{HS}(U;H)$ and let $u$ denote the solution to the Navier-Stokes equation \eqref{2D-NS-additive-abs}. 
Then
\begin{equation}
\label{P_est_add}
\mathbb{P}\left( \sup_{t\ge T}\left[ \|u(t)\|^2_H + \frac{\nu}{2} \int_0^t \|u(s)\|^2_V \, {\rm d}s -\|u_0\|^2_H -\left( \|G\|^2_{L_{HS}(U,H)}+ \frac{\|f\|^2_{V^*}}{\nu }\right)t\right] \ge R\right) \le e^{-\frac{\nu\lambda_1}{8\|G\|^2_{L_{HS}(U,H)}}R},
\end{equation}
for all $T\ge 0$, $R>0$.
\end{lemma}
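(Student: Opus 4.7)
The statement is the exact additive-noise analogue of Lemma \ref{lem_mul_2}, obtained by substituting $K_B = \|G\|^2_{L_{HS}(U,H)}$ and $L=0$ everywhere. My plan is therefore to retrace that argument, taking advantage of the fact that in the additive case the covariance is constant so some of the estimates become equalities.

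First, I would derive the pathwise energy inequality. Applying It\^o's formula to $\|u(t)\|^2_H$ for the solution of \eqref{2D-NS-additive-abs} and using $\langle B(u,u),u\rangle = 0$ together with the Young-type bound $2\langle u,f\rangle \le \nu\|u\|_V^2 + \tfrac{1}{\nu}\|f\|_{V^*}^2$ (as in \eqref{stima_F} with $\eta=1$), one obtains, $\mathbb{P}$-a.s.\ for every $t\ge 0$,
\begin{equation*}
\|u(t)\|_H^2 + \nu\int_0^t\|u(s)\|_V^2\,{\rm d}s \le \|u_0\|_H^2 + \Bigl(\|G\|^2_{L_{HS}(U,H)} + \tfrac{1}{\nu}\|f\|_{V^*}^2\Bigr)t + M(t),
\end{equation*}
where $M(t) := 2\int_0^t\langle u(s), G\,{\rm d}W(s)\rangle$ is a continuous local martingale. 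Because $G$ is independent of $u$, the quadratic variation satisfies
\begin{equation*}
[M](t) = 4\int_0^t \|G^* u(s)\|_U^2\,{\rm d}s \le 4\|G\|^2_{L_{HS}(U,H)} \int_0^t \|u(s)\|_H^2\,{\rm d}s \le \frac{4\|G\|^2_{L_{HS}(U,H)}}{\lambda_1}\int_0^t\|u(s)\|_V^2\,{\rm d}s,
\end{equation*}
where the last step is Poincar\'e's inequality \eqref{Poincarè}.

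Next, I would rearrange. Write $\kappa := \|G\|^2_{L_{HS}(U,H)}$ and set $\gamma := \frac{\nu\lambda_1}{8\kappa}$. From the two displays above,
\begin{equation*}
\|u(t)\|_H^2 + \tfrac{\nu}{2}\int_0^t\|u(s)\|_V^2\,{\rm d}s - \|u_0\|_H^2 - \Bigl(\kappa + \tfrac{\|f\|_{V^*}^2}{\nu}\Bigr)t \le M(t) - \tfrac{\nu}{2}\int_0^t\|u(s)\|_V^2\,{\rm d}s \le M(t) - \gamma [M](t).
\end{equation*}
Thus the event inside the probability in the statement is contained in $\{\sup_{t\ge 0}(M(t)-\gamma[M](t)) \ge R\}$, which is independent of $T\ge 0$; the $T$ in the $\sup$ only makes the event smaller.

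Finally I would apply the exponential martingale inequality (see e.g.\ the reference used in the proof of Lemma \ref{lem_mul_2}): for any continuous local martingale $M$ with $M(0)=0$ and any $\gamma,R>0$,
\begin{equation*}
\mathbb{P}\Bigl(\sup_{t\ge 0}\bigl[M(t) - \gamma[M](t)\bigr] \ge R\Bigr) \le e^{-\gamma R}.
\end{equation*}
With our choice $\gamma = \nu\lambda_1/(8\kappa)$ this gives exactly the claimed bound $e^{-\nu\lambda_1 R/(8\|G\|^2_{L_{HS}(U,H)})}$. No step is a genuine obstacle: the argument is a direct specialization of Lemma \ref{lem_mul_2}, and in fact the additive structure makes the bound on $[M]$ an immediate consequence of $\|G\|_{L_{HS}(U,H)}$ without invoking Assumption \ref{assumption-stochastic2}.
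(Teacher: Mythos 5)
Your proposal is correct and takes essentially the same route as the paper: the paper gives no separate proof for Lemma \ref{lem_add_2}, stating in Appendix \ref{sec:appB} that it is the particular case of the bounded multiplicative result (Lemma \ref{lem_mul_2}) with $K_B=\|G\|^2_{L_{HS}(U,H)}$ and $L=0$, and your argument simply retraces that proof — the It\^o energy inequality, the Poincar\'e bound $[M](t)\le \tfrac{4\|G\|^2_{L_{HS}(U,H)}}{\lambda_1}\int_0^t\|u(s)\|_V^2\,{\rm d}s$, and the exponential martingale inequality with $\gamma=\tfrac{\nu\lambda_1}{8\|G\|^2_{L_{HS}(U,H)}}$ — in the additive setting.
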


This implies that
\[
\mathbb{P}\left( \sup_{t\ge T}\left[ \frac{\nu}{2} \int_0^t \|u(s)\|^2_V \, {\rm d}s -\|u_0\|^2_H -\left( \|G\|^2_{L_{HS}(U,H)}+ \frac{\|f\|^2_{V^*}}{\nu }\right)t\right] \ge R\right) \le e^{-\frac{\nu\lambda_1}{8\|G\|^2_{L_{HS}(U,H)}}R},
\]
equivalently
\[
\mathbb{P}\left(\frac 1 {\nu t} \int_0^t \|u(s)\|^2_V \, {\rm d}s\le \frac 2{\nu^2} \left( \|G\|^2_{L_{HS}(U,H)}+ \frac{\|f\|^2_{V^*}}{\nu }\right)  + \frac 2{\nu^2 t}(\|u_0\|^2_H + R)\text{ for all } t\ge T\right) \ge 1- e^{-\frac{\nu\lambda_1}{8\|G\|^2_{L_{HS}(U,H)}}R}.
\]
Therefore
\begin{equation}\label{stima-certa}
\mathbb{P}\left(\limsup_{t\to+\infty} \frac 1 {\nu t} \int_0^t \|u(s)\|^2_V \, {\rm d}s\le \frac 2{\nu^2} \Big( \|G\|^2_{L_{HS}(U,H)}+ \frac{\|f\|^2_{V^*}}{\nu }\Big) \right) =1.
\end{equation}

\section{A different proof of Theorem \ref{pathwise_data_ass}}
\label{sec:appC}
In this appendix we present an alternative proof of Theorem \ref{pathwise_data_ass}. The argument relies on the existence of a unique ergodic invariant measure and on suitable moment bounds with respect to it. 
Notice that the assumptions on $G$ in the previous sections are not enough to obtain the  uniqueness of the invariant measure. For simplicity, here we simply assume that such a unique invariant measure exists and refer the reader to the existing literature for the precise conditions ensuring its existence, uniqueness, and ergodicity (see, for instance, \cite{FZ25} and the references therein). 
The next step consists in getting  some moment bounds with respect to the unique invariant  measure.

We denote by $u(t;x)$ the solution to equation \eqref{2D-NS-additive-abs} evaluated at time $t>0$,  started at time $0$  from $x$.
 We introduce the operator $P_t$  for each fixed time  $t>0$ as 
\begin{equation}
    \label{sem}
(P_t\phi)(x)=\mathbb E[\phi(u(t;x))], \qquad  \phi\in \mathcal{B}_b(H),
\end{equation}
where $\mathcal{B}_b(H)$ is the  space of all bounded Borel functions $\phi: H \to \mathbb R$.
This defines a  Markov semigroup acting on $\mathcal{B}_b(H)$. 
We recall that a  probability measure $\gamma$, defined on the Borel subsets of $H$, is {\em invariant} if
 for  any $t\ge 0$,
\begin{equation}\label{def-mu-inv}
\int_H P_t \phi \: {\rm d}\gamma =\int_H \phi \:{\rm d}\gamma \hspace{1cm} \forall \phi \in \mathcal{B}_b(H).
\end{equation}

\begin{lemma}
\label{ergodic}
Assume that equation \eqref{2D-NS-additive-abs} admits a unique ergodic invariant measure $\gamma$.
Then
\begin{equation}
    \label{est_inv_meas_V}
    \int_H \|x\|^2_V \, {\rm d}\gamma(x) \le \frac{2}{\nu}\left(  \|G\|^2_{L_{HS}(U,H)}+ \frac{1}{\nu} \|f\|^2_{V^*}\right).
    \end{equation}
\end{lemma}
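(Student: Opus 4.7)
The plan is to combine the pathwise a priori bound on the time-averaged $V$-norm, already obtained in Appendix~\ref{sec:appB}, with the Birkhoff ergodic theorem applied to the stationary solution. I would start the process with initial datum $u_0$ distributed according to the unique invariant measure $\gamma$, independent of the Brownian motion $W$. By invariance, the resulting solution $u(\cdot)$ is a stationary Markov process whose marginal at each time $t\ge 0$ equals $\gamma$. Uniqueness of the invariant measure forces $\gamma$ to be ergodic, and hence the stationary solution is ergodic under time translations.

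Applying the Birkhoff ergodic theorem to the nonnegative Borel observable $x \mapsto \|x\|_V^2$ (set to $+\infty$ outside $V$), I would obtain, $\mathbb{P}$-almost surely,
\[
\lim_{t\to+\infty} \frac{1}{t}\int_0^t \|u(s)\|_V^2\,{\rm d}s = \int_H \|x\|_V^2\,{\rm d}\gamma(x) \ \in\ [0,+\infty].
\]
On the other hand, estimate \eqref{stima-certa} in Appendix~\ref{sec:appB} ensures that, for every deterministic $u_0\in H$,
\[
\limsup_{t\to+\infty}\frac{1}{t}\int_0^t \|u(s;u_0)\|_V^2\,{\rm d}s \le \frac{2}{\nu}\Bigl(\|G\|^2_{L_{HS}(U,H)} + \tfrac{1}{\nu}\|f\|^2_{V^*}\Bigr) \qquad \mathbb{P}\text{-a.s.}
\]
Since the right-hand side does not depend on $u_0$, disintegrating with respect to $\gamma$ shows that the same bound holds $\mathbb{P}$-a.s.\ also when the initial condition is drawn from $\gamma$. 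Comparing the two displays on a set of full $\gamma\otimes\mathbb{P}$-measure produces the desired inequality.

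The main technical delicacy is that $x\mapsto \|x\|_V^2$ is not known a priori to belong to $L^{1}(\gamma)$ — which, in some sense, is precisely what we are establishing. This is handled either by applying Birkhoff's theorem to the truncated observables $\|x\|_V^2\wedge N$ and letting $N\to +\infty$ by monotone convergence, or by observing that the $\mathbb{P}$-a.s.\ finite upper bound coming from \eqref{stima-certa} automatically forces the Birkhoff limit, and hence $\int_H\|x\|_V^2\,{\rm d}\gamma(x)$, to be finite and to satisfy the claimed bound.
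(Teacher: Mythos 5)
Your argument is correct, but it is a genuinely different route from the paper's. The paper proves the lemma without ever using ergodicity: it only exploits the invariance identity $\int_H P_s\phi\,{\rm d}\gamma=\int_H\phi\,{\rm d}\gamma$ applied to the truncations $\|x\|_H^2\wedge k$ and $\|x\|_V^2\wedge k$, combined with the estimates \emph{in expectation} of Lemma \ref{lem_add_1} — first \eqref{E_est_add2} with $s\to+\infty$ to get the intermediate bound \eqref{est_inv_meas_H} on $\int_H\|x\|_H^2\,{\rm d}\gamma$, then \eqref{E_est_add} averaged over the window $[0,\tfrac1{\nu\lambda_1}]$ via Fubini--Tonelli — and concludes by monotone convergence. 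You instead invoke the ergodicity hypothesis through the Birkhoff theorem for the stationary (or $\gamma$-a.e.\ started) solution, and control the limit by the \emph{pathwise} exponential-martingale bound \eqref{stima-certa} of Lemma \ref{lem_add_2}, handling the a priori non-integrability of $\|\cdot\|_V^2$ by the same truncation/monotone-convergence device. Both proofs are sound; the paper's is softer (it holds for any invariant measure, needs only second-moment bounds, and delivers the $H$-moment estimate as a by-product), while yours is shorter once \eqref{stima-certa} is available, at the price of using ergodicity and the stronger pathwise estimate. Two small points to keep in order in your version: the disintegration step needs joint measurability of $(u_0,\omega)\mapsto u(\cdot;u_0)(\omega)$ (true here, by pathwise continuous dependence on the initial datum; alternatively, use Birkhoff in the form \eqref{Birk-f} for a fixed $\gamma$-a.e.\ deterministic initial velocity and intersect with the full-measure event of \eqref{stima-certa} for that same datum, avoiding the stationary construction altogether); and note a mild architectural circularity — within the paper this lemma exists precisely to license applying Birkhoff to $\|\cdot\|_V^2$ in the alternative proof of Theorem \ref{pathwise_data_ass}, whereas your proof of the lemma already runs Birkhoff together with \eqref{stima-certa}, which is essentially the ingredient the main proof of that theorem uses directly.
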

\begin{proof}
To prove estimate \eqref{est_inv_meas_V} we proceed in a classical way. 
We consider the identity \eqref{def-mu-inv} and thanks to the definition  \eqref{sem} of the operator $P_t$, we use the previous mean estimates on $\|u(t)\|_H^2$ and $\int_0^t \|u(s)\|_V^2{\rm d}s$.

We start by proving that the function $\Psi:H \rightarrow \mathbb{R}^+$, $\Psi(x):=\|x\|^2_H$ belongs to $L^1(\gamma)$ and 
\begin{equation}
    \label{est_inv_meas_H}
    \int_H \|x\|^2_H \, {\rm d}\gamma(x) \le  \frac{1}{\nu \lambda_1}\left( \|G\|^2_{L_{HS}(U,H)}+ \frac{1}{\nu } \|f\|^2_{V^*}\right).
\end{equation}
We consider the bounded map $\Psi_k : H \rightarrow \mathbb{R}_+$ defined as 
\[
\Psi_k (x):= 
\begin{cases}
    \|x\|^2_H & \text{if} \ \|x\|^2_H \le k,
    \\
    k & \text{otherwise}.
\end{cases}
\]
By the invariance of the measure $\gamma$ and the boundedness of $\Psi_k$ we get 
\[
\int_H \Psi_k(x)\, {\rm d}\gamma(x) = \int_HP_s\Psi_k(x)\, {\rm d}\gamma(x), \qquad \forall \ s \ge 0.
\]
Moreover, 
\[
P_s \Psi_k(x):=\mathbb{E}\left[ \Psi_k (u(s;x))\right]\le \mathbb{E}\left[ \|u(s;x)\|^2_H\right].
\]
From Lemma \ref{lem_add_1} we get an estimate for $\mathbb{E}\left[ \|u(s;x)\|^2_H\right]$ (see \eqref{E_est_add2}). Letting $s \rightarrow + \infty$ the exponential term in the r.h.s. of \eqref{E_est_add2} vanishes, so we obtain the estimate
\[
\limsup_{s \rightarrow + \infty } P_s \Psi_k(x) \le \frac{1}{\nu \lambda_1}\left( \|G\|^2_{L_{HS}(U,H)}+ \frac{1}{\nu } \|f\|^2_{V^*}\right).
\]
As a consequence, 
\[
\int_H \Psi_k(x)\, {\rm d}\gamma(x) \le \frac{1}{\nu \lambda_1}\left( \|G\|^2_{L_{HS}(U,H)}+ \frac{1}{\nu} \|f\|^2_{V^*}\right).
\]
Since $\Psi_k$ converges pointwise and monotonically from below to $\Psi$, from the Monotone Convergence Theorem we infer \eqref{est_inv_meas_H}.

Let us now come to the proof of \eqref{est_inv_meas_V}. We consider the bounded map 
$\phi_k : H \rightarrow \mathbb{R}_+$  defined as 
\[
\phi_k (x):= 
\begin{cases}
    \|x\|^2_V & \text{if} \ \|x\|^2_V \le k,
    \\
    k & \text{otherwise}.
\end{cases}
\]
This is a truncation, approximating  the function $\phi:  H \to  \mathbb{R}_+\cup\{+\infty\}$, defined as 
$\phi(x)= \|x\|^2_V$.
It obviously holds 
\[
\int_H \phi_k(x)\, {\rm d}\gamma(x)= \nu \lambda_1\int_0^{\frac{1}{\nu\lambda_1}} \int_H \phi_k(x)\, {\rm d}\gamma(x)\, {\rm d}s.
\]
On the other hand, the invariance of $\gamma$ and the boundedness of $\phi_k$ yield 
\[
\int_H \phi_k(x)\, {\rm d}\gamma(x) = \int_H P_s\phi_k(x)\, {\rm d}\gamma(x), \qquad \forall \ s \ge 0.
\]
Thus, by the Fubini-Tonelli Theorem, since 
\[
\phi_k(x)=\|x\|^2_V \wedge k \le \|x\|^2_V, 
\]
thanks to Lemma \ref{lem_add_1} (see \eqref{E_est_add}) we get 
\begin{align*}
    \int_H \phi_k(x)\, {\rm d}\gamma(x) 
    & = \nu\lambda_1 \int_0^{\frac{1}{\nu\lambda_1}} \int_H \phi_k(x)\, {\rm d}\gamma(x)\, {\rm d}s = \nu\lambda_1 \int_0^{{\frac{1}{\nu\lambda_1}}} \int_H P_s\phi_k(x)\, {\rm d}\gamma(x)\, {\rm d}s
    \\
    & =\nu\lambda_1\int_0^{{\frac{1}{\nu\lambda_1}}} \int_H \mathbb{E}\left[ \phi_k(u(s;x))\right]\, {\rm d}\gamma(x)\, {\rm d}s 
    = \nu\lambda_1\int_H \mathbb{E}\int_0^{{\frac{1}{\nu\lambda_1}}} \phi_k(u(s;x))\, {\rm d}s \, {\rm d}\gamma(x)
    \\
    &\le \lambda_1 \int_H \nu\mathbb{E}\int_0^{{\frac{1}{\nu\lambda_1}}} \left[\|u(s;x)\|^2_V \right]\, {\rm d}s \, {\rm d}\gamma(x)
    \\
    &\le \lambda_1\left( \frac{1}{\nu\lambda_1}\left( \|G\|^2_{L_{HS}(U,H)}+ \frac{1}{\nu} \|f\|^2_{V^*}\right) +\int_H \|x\|_H^2\, {\rm d}\gamma(x)\right).
\end{align*}
Estimate \eqref{est_inv_meas_H} thus yields 
\[
 \int_H \phi_k(x)\, {\rm d}\gamma(x) \le \frac{2}{\nu}\left(  \|G\|^2_{L_{HS}(U,H)}+ \frac{1}{\nu} \|f\|^2_{V^*}\right).
\]
Since $\phi_k$ converges pointwise and monotonically from below to $\Phi$,
estimate \eqref{est_inv_meas_V} follows. This concludes the proof.
\end{proof}

\begin{proof}[Alternative proof of Theorem \ref{pathwise_data_ass} (under more restrictive assumptions on $G$)]
We work under the assumptions on $G$ ensuring the existence of a unique ergodic invariant measure.
The proof relies on the Birkhoff Ergodic Theorem (see, e.g., \cite{sinai}). It states that, if $\gamma$ is an ergodic invariant measure for the Navier-Stokes equation \eqref{2D-NS-additive-abs}), then for any $\phi\in L^1(\gamma)$ and 
 for $\gamma$-a.e. initial velocity, it holds
\begin{equation}\label{Birk-f}
\lim_{t\to +\infty}\frac 1t \int_0^t \phi(u(s))\, {\rm d}s=\int_H \phi\,{\rm d}\gamma \qquad
\mathbb P-a.s.
\end{equation}
On the other hand, from Lemma \ref{ergodic} we know that $\phi(x)=\|x\|^2_V \in L^1(\gamma)$ and we have the bound \eqref{est_inv_meas_V}. Therefore, from the Birkhoff Ergodic Theorem we can infer
\[
\lim_{t\to +\infty}\frac{1}{\nu t} \int_0^t \|u(s)\|^2_V\, {\rm d}s=\frac{1}{\nu}\int_H \|x\|^2_V\,{\rm d}\gamma(x) \le \frac{2}{\nu^2}\left(  \|G\|^2_{L_{HS}(U,H)}+ \frac{1}{\nu} \|f\|^2_{V^*}\right), \qquad
\mathbb P-a.s.
\]
From now on we proceed as in the proof of Theorem \ref{pathwise_data_ass} to conclude.
    
\end{proof}

%\printbibliography

\end{document}